\documentclass[intlimits,12pt,reqno]{amsart}
\usepackage{latexsym,amsthm,amsmath,amssymb,mathrsfs,mathtools}
\usepackage[T1]{fontenc}
\usepackage[utf8]{inputenc}
\usepackage[mathscr]{eucal}
\usepackage{enumerate}
\usepackage{enumitem}
\usepackage{color}

\usepackage{ esint }
\usepackage[colorlinks	=	true,						
linkcolor	=	red,
urlcolor	=	red,
citecolor	=	red]%
{hyperref}
\usepackage{cleveref}
\usepackage{graphicx}
\usepackage{tikz}
\usetikzlibrary{patterns}

\newtheorem{theorem}{Theorem}[section]
\newtheorem{lemma}[theorem]{Lemma}

\newtheorem{proposition}[theorem]{Proposition}
\theoremstyle{definition}
\newtheorem{remark}[theorem]{Remark}
\newtheorem{definition}[theorem]{Definition}

\newtheorem*{theoremx}{Theorem}
\newtheorem*{propositionx}{Proposition}

\def\diam{\operatorname{diam}}
\def\card{\operatorname{card}}
\def\dist{\operatorname{dist}}


\DeclareMathOperator{\Mod}{mod}

\title{Uniformization of cofat domains on  metric two-spheres}
\author{Chengxi Li and Kai Rajala} 
\address{C.\ Li: Department of Mathematics and Statistics, University of Jyv\"askyl\"a, P.O. Box 35 (MaD), FI-40014, University of Jyv\"askyl\"a, Finland. {\tt cheli@jyu.fi }} %
\address{K.\ Rajala: Department of Mathematics and Statistics, University of Jyv\"askyl\"a, P.O. Box 35 (MaD), FI-40014, University of Jyv\"askyl\"a, Finland. {\tt kai.i.rajala@jyu.fi}}
\date{}

\begin{document}

	\thanks{  
		\newline {\it 2020 Mathematics Subject Classification.} 30L10, 30C65, 30C20. 
		\newline C.L. was supported by the Research Council of Finland, project numbers 364210 and 360505, and China Scholarship Council Fellowship, project number 202306200022. K.R. was supported by the Research Council of Finland, project number 360505. }

	\begin{abstract}
		
		We extend \emph{Schramm's cofat uniformization theorem} to cofat domains on upper Ahlfors 2-regular metric two-spheres $X$. Specifically, we show that if $\Omega \subset X$ is a cofat domain, then there exists a $\frac{\pi}{2}$-quasiconformal homeomorphism $f: \Omega \to D$ onto a circle domain $D \subset \mathbb{S}^2$. Moreover, $f$ preserves the point-components and non-trivial complementary components. We also construct examples which show that the above conclusions are not true for countably connected $\ell^{\alpha}$-subdomains of $\mathbb{S}^2$.
		
	\end{abstract}
	
	\maketitle
	
	\section{Introduction}

	The Riemann mapping theorem asserts that every simply connected proper subdomain of the complex plane is conformally equivalent to the unit disk $\mathbb{D}$. For multiply connected domains, Koebe \cite{koebe1907uniformisierung} conjectured  that every domain in the Riemann sphere $\mathbb{S}^2$ is conformally equivalent to a \emph{circle domain}, i.e., a domain whose boundary components are points and circles.
	
	 Koebe himself proved the conjecture for finitely connected domains \cite{koebe1920abhandlungen,koebe1922konforme}, while He and Schramm \cite{he1993fixed} established it for countably connected domains. Subsequently, in \cite{schramm1995transboundary}, Schramm introduced a powerful tool for studying the uniformization of multiply connected domains—the transboundary extremal length (or \emph{the transboundary modulus}). He also proved \emph{the cofat uniformization theorem}, which proves Koebe's conjecture under geometric conditions; see below. Recently, Karafyllia and Ntalampekos \cite{karafyllia} gave an affirmative answer to Koebe's conjecture for the class of Gromov hyperbolic domains. The general case for uncountably connected domains remains an open problem. See also \cite{luo2024koebe,rajala2021uniformizations,younsi2016removability} for recent works on Koebe's conjecture.
	 
	 In this paper we extend \emph{Schramm’s cofat uniformization theorem} to cofat domains on upper Ahlfors 2-regular metric two-spheres.

	 Recall that a metric space $X=(X,d)$ is a \emph{metric two-sphere} if $X$ is homeomorphic to $\mathbb{S}^2$ and has finite two-dimensional Hausdorff measure $\mathcal{H}^2$. Given a domain $\Omega \subset X$, we call a connected component $p$ of $X \setminus \Omega$ \emph{non-trivial} and denote $p \in \mathcal{C}_N(\Omega)$ if $\diam(p)>0$. Otherwise we call $p$ a \emph{point-component} and denote $p \in \mathcal{C}_P(\Omega)$. 
	 
	 Let $\hat{\Omega}:=X / \sim$, where $z \sim w$ if either $ z=w \in \Omega$ or $z,w \in p$ for some $p \in \mathcal{C}(\Omega):=\mathcal{C}_N(\Omega) \cup \mathcal{C}_P(\Omega)$. We denote the induced quotient map by $\pi_\Omega: X \to \hat{\Omega}$. Every homeomorphism $f: \Omega \to D$ between subdomains of metric two-spheres can be extended to a homeomorphism $\hat{f}: \hat{\Omega} \to \hat{D}$. We make no distinction between $p \in \mathcal{C}(\Omega)$ and $\pi_\Omega(p) \in \hat{\Omega}$.
	 
	 Let $X$ be a metric two-sphere. We call $A\subset X$ \emph{$\tau$-fat} if for every $x_0 \in A$ and every open ball $B(x_0,r) \subset X$ that does not contain $A$, we have $$\mathcal{H}^2 (A \cap B(x_0,r)) \geq \tau r^2.$$ A domain $\Omega \subset X$ is \emph{$\tau$-cofat} if there exists $\tau >0$ such that every $p \in \mathcal{C}_N(\Omega)$ is $\tau$-fat. A domain $ D \subset \mathbb{S}^2 $ is called \emph{a circle domain} if each $p \in \mathcal{C}_N(D)$ is a disk. We now state our main result.
	 
	 \begin{theorem}\label{main}
	 	Suppose that $X$ is an upper Ahlfors $2$-regular metric two-sphere. If \,$\Omega \subset X$ is a cofat domain, then there exists a $\frac{\pi}{2}$-quasiconformal homeomorphism  $f\colon\Omega \to D$ onto a circle domain $D \subset \mathbb{S}^2$. Moreover, 
	 	\begin{equation} \label{claimi2}
	 		\hat{f}(\mathcal{C}_N(\Omega))=\mathcal{C}_N(D) \quad \text{and} \quad  \hat{f}(\mathcal{C}_P(\Omega))=\mathcal{C}_P(D). 
	 	\end{equation}
	 \end{theorem}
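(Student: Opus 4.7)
The plan is to combine Rajala's quasiconformal uniformization theorem for reciprocal metric two-spheres (which yields the sharp distortion constant $\frac{\pi}{2}$) with an exhaustion argument in the spirit of Schramm's original cofat uniformization proof, rounding out the images at each finite stage by Koebe's classical theorem for finitely connected planar domains. The idea is to approximate $\Omega$ by domains whose non-trivial complementary components are all but finitely many of the $p_i$, uniformize each approximation to a circle domain, and pass to a limit.

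Concretely, enumerate $\mathcal{C}_N(\Omega)=\{p_1,p_2,\dots\}$ in order of decreasing diameter and, for each $n$, build a filled metric sphere $X_n$ by excising each $p_i$ with $i\le n$ and gluing in a closed round Euclidean disk $\overline{D}_i$ with $\diam\overline{D}_i=\diam p_i$, identified along $\partial \overline{D}_i\to\partial p_i$. The cofat bound $\mathcal{H}^2(p_i)\gtrsim\diam(p_i)^2$ matches the mass of the Euclidean filling, so $X_n$ is upper Ahlfors $2$-regular with constants uniform in $n$. Next, verify that $X_n$ is reciprocal in the sense of Rajala: the upper modulus estimate for a quadrilateral family follows from upper $2$-regularity, and the lower estimate uses the Euclidean structure on the filled disks together with the cofat mass of the tail components $p_i$, $i>n$, to carry test curves. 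Rajala's theorem then produces a $\frac{\pi}{2}$-QC homeomorphism $g_n\colon X_n\to\mathbb{S}^2$. Since the complement of $g_n(\Omega)\cup g_n(\bigcup_{i\le n}\overline{D}_i)$ in $\mathbb{S}^2$ consists of images of the tail components plus point-components, post-compose $g_n$ with the Koebe uniformization of the finitely connected planar domain $\mathbb{S}^2\setminus\bigcup_{i\le n}g_n(\overline{D}_i)$, then normalize by fixing the images of three reference points in $\Omega$. This yields a $\frac{\pi}{2}$-QC map $f_n\colon\Omega\to\mathbb{S}^2$ whose image has round disks corresponding to each of $p_1,\dots,p_n$.

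By equicontinuity and compactness of $\frac{\pi}{2}$-QC maps under uniform upper Ahlfors $2$-regularity bounds, extract a subsequence $(f_{n_k})$ converging locally uniformly on $\Omega$ to a $\frac{\pi}{2}$-QC map $f\colon\Omega\to D\subset\mathbb{S}^2$. For each fixed $i$, the images $f_{n_k}(p_i)$ are round disks once $n_k\ge i$, and by normalization they converge in Hausdorff distance to nondegenerate round disks, so every non-trivial component of $D$ is a round disk. Equicontinuity together with cofatness ensures that point-components of $\Omega$ collapse to points and that distinct non-trivial components map to disjoint round disks, giving the correspondence \eqref{claimi2}. The main obstacle will be controlling this limit passage, namely ruling out (i) degeneration of the round-disk images of the $p_i$ to points, and (ii) distinct non-trivial components converging to overlapping disks. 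Both require a quantitative form of cofatness that is stable under the $\frac{\pi}{2}$-QC maps $f_n$, together with a diagonal argument matching the normalizations across $n$; the sharp distortion constant is essential here to prevent loss of fatness in the limit.
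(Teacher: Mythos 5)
Your overall scheme (approximate by finitely connected stages, uniformize each stage by Rajala's theorem plus Koebe's theorem, normalize, and pass to a locally uniform limit) is the same skeleton the paper uses, but two points need attention. First, the filled spheres $X_n$ are both unnecessary and problematic: the paper simply takes the finitely connected subdomains $\Omega_j=X\setminus\{p_0,\dots,p_j\}$ of the \emph{same} sphere $X$, restricts the $\tfrac{\pi}{2}$-QC map from Theorem \ref{rajala} to them, and applies Koebe to the planar images; no surgery is needed. Your gluing step, by contrast, presupposes that $\partial p_i$ is a Jordan curve so that a round disk can be attached along it, which cofatness does not guarantee (e.g.\ a component consisting of two externally tangent closed disks is $\tau$-fat but has a figure-eight boundary, and gluing a disk along it does not produce a sphere), and the claims that the glued space is a metric two-sphere with upper regularity and reciprocality uniform in $n$ are asserted rather than proved.

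The more serious gap is the correspondence \eqref{claimi2}, which is the actual content of the theorem and which you dispose of with ``equicontinuity together with cofatness ensures\dots'' while explicitly deferring the two genuine difficulties (degeneration of the disk images to points, and point-components failing to collapse). No soft normal-family or equicontinuity argument can settle these: the maps are only defined on $\Omega$, so there is no a priori modulus of continuity across the boundary components, and the paper's own Theorems \ref{nontop2} and \ref{ptonon2} show that both halves of \eqref{claimi2} can fail under mere summability of diameters, so the argument must use cofatness quantitatively. The paper does this with transboundary modulus: a uniform upper bound for annular curve families in finitely connected cofat domains (Proposition \ref{unibdd}), the resulting decay across many disjoint annuli (Proposition \ref{asyb}), a lower bound on the circle-domain side (Proposition \ref{cir}), a coarea/H\"older lower bound exploiting $\sum_p\diam(p)^2\lesssim\mathcal{H}^2(X)$ from cofatness (Lemma \ref{pm}), Carath\'eodory kernel convergence to identify $\hat f(p_\ell)$ with the Hausdorff limit $q_\ell$, and the quasi-invariance of the transboundary modulus under QC maps (Theorem \ref{qcinvariance}) to transfer these bounds between $\Omega_j$ and the circle domains $D_j$. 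Your proposal contains none of these estimates, so the limit passage — precisely the step you flag as ``the main obstacle'' — remains unproved.
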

	 The upper $2$-regularity and quasiconformality are defined in \eqref{ahlfors} and Definition \ref{gd}, respectively. Schramm \cite{schramm1995transboundary} proved Theorem \ref{main} for $X=\mathbb{S}^2$ and a conformal $f$. Compactifications of $\mathbb{R}^2$ equipped with the $\ell^{\infty}$-norm show that the constant $\frac{\pi}{2}$ is best possible (see \cite{romney2019quasiconformal}). 

     The main difficulty in proving Theorem \ref{main} is the adaptation of basic transboundary modulus estimates to the metric space setting. Note that if quasiconformal maps preserved cofat domains, then Theorem \ref{main} would follow by combining Schramm's theorem with \cite[Theorem 1.6]{rajala2017uniformization}, according to which there is a quasiconformal map $\phi:X \to \mathbb{S}^2$. 

    Such an argument can be applied in \emph{linearly locally connected} (LLC) spaces $X$ (see e.g. \cite[Theorem 8.23]{heinonen2001lectures}), on which $\phi$ is \emph{quasisymmetric} by \cite[Theorem 1.1]{bonk2002quasisymmetric}, and therefore preserves cofat domains. On the other hand, we demonstrate in Section \ref{sec:cofatness-not-qc-invariant} that cofatness is not always preserved by quasiconformal maps in the setting of Theorem \ref{main} without the LLC-condition. 
	 
	 We now give some background. Non-smooth versions of the uniformization theorem have found applications in various areas of mathematics, such as the calculus of variations, metric geometry, complex dynamics, and 
	 geometric group theory (see e.g. \cite{bonk2006quasiconformal,bonk2011uniformization,bonk2005conformal,bonk2013quasisymmetric,haissinsky2009coarse,kleiner2006asymptotic}). A fundamental result by Bonk and Kleiner \cite{bonk2002quasisymmetric} shows that an Ahlfors 2-regular metric two-sphere admits a \emph{quasisymmetric} homeomorphism onto the standard $\mathbb{S}^2$ if and only if it is \emph{linearly locally connected}. 

 A related line of research concerns quasisymmetric uniformization of carpet-like domains. Bonk \cite{bonk2011uniformization} proved that a Sierpiński carpet in $\mathbb{S}^2$ whose peripheral circles are uniformly relatively separated uniform quasicircles is quasisymmetrically equivalent to a round carpet. Bonk and Merenkov \cite{bonk2013quasisymmetric} subsequently developed rigidity results for standard Sierpiński carpets, using discrete modulus as a quasisymmetric invariant. Ntalampekos \cite{ntalampekos2019non,Nta20,ntalampekos2021non,ntalampekos2025quasiconformal,ntalampekos2020non} further studied removability and uniformization problems for carpets.
 
 The series of works by Lytchak and Wenger \cite{lytchak2016regularity,lytchak2017area,lytchak2017energy,lytchak2017intrinsic,lytchak2018isoperimetric,lytchak2020canonical,lytchak2020dehn} establishes existence and regularity results for area-minimizing discs in proper metric spaces, and develops the intrinsic geometry of such minimizers, including their quasiconformal parametrizations under mild assumptions. 

The multiply connected case was considered by Merenkov and Wild\-rick \cite{merenkov2013quasisymmetric}, who proved a quasisymmetric version of Koebe's theorem for finitely and countably connected metric surfacs. See also \cite{cheeger2023thin,geyer2018quantitative,HaLi23,heinonen2010quasisymmetric,ikonen2022uniformization,karafyllia,ntalampekos2025uniformization,ntalampekos2026uniformization,pankka2014geometry,rajala2021uniformization,rajala2021quasisymmetric,rehmert2022quasisymmetric,wildrick2010quasisymmetric} for progress on non-smooth uniformization theory.

	Quasisymmetric maps from $X$ to $\mathbb{S}^2$ have globally controlled distortion, and can only exist on spaces whose geometries are controlled. Towards a more general uniformization theory, the second author \cite{rajala2017uniformization} established conditions for a metric surface to admit a \emph{quasiconformal} uniformization. Subsequently,  Ntalampekos and Romney \cite{ntalampekos2023polyhedral,ntalampekos2024polyhedral} and Meier and Wenger \cite{Mei,meier2024quasiconformal} (for a locally geodesic $X$) found \emph{weakly quasiconformal parametrizations} for \emph{all metric two-spheres}. 
	 
	 The purpose of our work is to initiate the study of quasiconformal uniformization for multiply connected subdomains of metric surfaces. It is desirable to find further conditions which are sufficient for the conclusions of Theorem \ref{main}. 
	
	One approach is to consider summability conditions on the diameters of the complementary components. As a byproduct of the proof of Theorem \ref{main}, we obtain the following result which is of independent interest even for $X=\mathbb{S}^2$.
	
	Given $\alpha>0$, let $\mathcal{F}_\alpha(X)$ be the collection of \emph{$\ell^\alpha$-domains} on $X$, i.e., the \emph{countably connected} subdomains $\Omega_\alpha$ of $X$ satisfying  
	$$\sum_{p \in \mathcal{C}(\Omega_\alpha)} \operatorname{diam}(p)^\alpha<\infty. $$ 
	
	 \begin{theorem}\label{dimi2}
		Suppose that $X$ is a metric two-sphere. If \,$\Omega_2 \in \mathcal{F}_2(X)$, then every conformal homeomorphism $f: \Omega_2 \to D$ onto a circle domain $D \subset \mathbb{S}^2$ satisfies $\hat{f}(\mathcal{C}_N(\Omega_2)) \subset \mathcal{C}_N(D)$.
	\end{theorem}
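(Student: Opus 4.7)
My plan is to argue by contradiction using Schramm's transboundary modulus. Suppose some $p \in \mathcal{C}_N(\Omega_2)$ satisfies $\hat f(p) = q \in \mathcal{C}_P(D)$, and set $d_0 := \diam(p) > 0$. Since $q$ is a point-component of $\mathbb{S}^2 \setminus D$, one can select radii $r_n \searrow 0$ and $R_n \searrow 0$ with $R_n/r_n \to \infty$ so that the boundary circles $\partial B(q, r_n), \partial B(q, R_n)$ lie in $D$ (a measure-theoretic choice avoiding the countably many complementary disks crossing any given radius). Let $\Gamma_n^\ast$ be the family of loops in $\hat D$ that separate $\partial B(q, r_n)$ from $\partial B(q, R_n)$ and are contained in $\bigl(B(q, R_n) \setminus \overline{B(q, r_n)}\bigr) \cap \hat D$.

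The first step is to show that $\operatorname{mod}_T(\Gamma_n^\ast; \hat D) \to \infty$. Because $D$ is a circle domain, the components of $\mathbb{S}^2 \setminus D$ inside the round annulus $A_n := B(q,R_n) \setminus \overline{B(q, r_n)}$ are disks (plus possibly isolated points) whose sizes are constrained by Euclidean area packing. By Schramm's comparison between classical and transboundary modulus for circle domains (see \cite{schramm1995transboundary}), the transboundary modulus of $\Gamma_n^\ast$ is comparable, up to a universal multiplicative constant, to the classical $2$-modulus $\log(R_n/r_n)/(2\pi)$ of the round annulus in $\mathbb{S}^2$. Applying conformal invariance of the transboundary modulus then gives
\[
\operatorname{mod}_T\bigl(\hat f^{-1}(\Gamma_n^\ast); \hat\Omega_2\bigr) \;=\; \operatorname{mod}_T(\Gamma_n^\ast; \hat D) \;\longrightarrow\; \infty.
\]

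The second step is to exhibit a single uniform transboundary-admissible density in $\hat\Omega_2$ that bounds $\operatorname{mod}_T(\hat f^{-1}(\Gamma_n^\ast); \hat\Omega_2)$ independently of $n$; this is where the $\ell^2$-hypothesis is essential. Every loop $\gamma \in \hat f^{-1}(\Gamma_n^\ast)$ surrounds $p$ in $\hat\Omega_2$, so its lift
\[
S(\gamma) \;:=\; (\gamma \cap \Omega_2) \cup \bigcup_{p' \in \gamma \cap \mathcal{C}(\Omega_2)} p' \;\subset\; X
\]
is a connected set enclosing the continuum $p$, and therefore $\diam_X(S(\gamma)) \geq d_0$. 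Since $S(\gamma)$ is a cyclic chain of the $\Omega_2$-arcs of $\gamma$ together with the complementary components it meets, the triangle inequality along this chain yields
\[
\operatorname{length}_X(\gamma \cap \Omega_2) + \sum_{p' \in \gamma} \diam(p') \;\geq\; \diam_X(S(\gamma)) \;\geq\; d_0.
\]
Setting $\rho \equiv 2/d_0$ on $\Omega_2$ and $\rho(p') := 2\diam(p')/d_0$ for every $p' \in \mathcal{C}(\Omega_2)$ produces a transboundary-admissible density for every loop in $\hat f^{-1}(\Gamma_n^\ast)$ simultaneously, with total transboundary mass
\[
\int_{\Omega_2} \rho^2 \, d\mathcal{H}^2 + \sum_{p' \in \mathcal{C}(\Omega_2)} \rho(p')^2 \;=\; \frac{4}{d_0^2}\Bigl(\mathcal{H}^2(\Omega_2) + \sum_{p' \in \mathcal{C}(\Omega_2)} \diam(p')^2\Bigr),
\]
which is finite by $\mathcal{H}^2(X) < \infty$ and $\Omega_2 \in \mathcal{F}_2(X)$. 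Combined with the divergence from the first step, this is a contradiction, completing the proof.

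The main obstacle is the comparison in the first step: verifying that the transboundary modulus on the circle-domain side truly diverges with $\log(R_n/r_n)$ and is not collapsed by the disk components accumulating near $q$. The standard device is to convert a transboundary-admissible density on $\hat D$ into a classically admissible density on the annulus by spreading each component weight $\rho(p')$ uniformly over a $\diam(p')$-thickening inside $D$; the area-packing property of the disks then bounds the resulting mass inflation, so the transboundary mass controls the classical mass of the annulus from below up to a universal constant, as required.
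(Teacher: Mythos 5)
There is a genuine gap, and it sits at the heart of your Step 2. You claim that for every loop $\gamma\in\hat f^{-1}(\Gamma_n^\ast)$ the lift $S(\gamma)$ satisfies $\diam_X(S(\gamma))\geqslant d_0=\diam(p)$ because it ``encloses'' $p$. This is a Euclidean fact: in the plane a continuum whose complement has a bounded component containing $p$ must contain $p$ in its convex hull, so its diameter dominates $\diam(p)$. In the present theorem $X$ is an arbitrary metric two-sphere with finite $\mathcal H^2$ (no Ahlfors regularity, no linear local connectedness), and there is no convex-hull argument: a connected set can separate $p$ from the rest of $X$ while having diameter much smaller than $\diam(p)$ (think of a sphere with a large ``balloon'' attached by a thin neck, with $p$ inside the balloon and the separating loop running around the neck; even on the round sphere a small circle separates a huge continuum from a point). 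Consequently your density $\rho\equiv 2/d_0$ on $\Omega_2$, $\rho(p')=2\diam(p')/d_0$, need not be admissible for $\hat f^{-1}(\Gamma_n^\ast)$, and the uniform upper bound that is supposed to contradict Step 1 is not established. Note that nothing about $f$ controls which side of the loop is geometrically ``small'' in $X$, so this cannot be fixed by a constant depending only on $p$. Two smaller issues: the ``measure-theoretic'' choice of radii with $\partial B(q,r_n),\partial B(q,R_n)\subset D$ is neither available in general (the bad radii form a countable union of intervals that can exhaust all small radii) nor needed for a transboundary family; and the comparability in Step 1 between $\Mod_\mathrm{T}(\Gamma_n^\ast)$ and $\log(R_n/r_n)$ is only sketched and requires separate treatment of the (boundedly many, by area packing) large components meeting $A_n$.

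The paper's route avoids exactly this trap by working with \emph{joining} rather than separating curves. It fixes a Jordan curve $J\subset\Omega_2$ and the family $\Gamma$ of curves joining $\pi_{\Omega_2}(p)$ and $\pi_{\Omega_2}(J)$; the lower bound $\Mod_\mathrm{T}\Gamma>0$ is obtained as in Lemma \ref{pm} by applying the coarea inequality to the distance function from a curve joining $J$ and $\partial p$, producing a positive measure family of level curves in $\Gamma$, and then H\"older's inequality, where the $\ell^2$-condition enters through $\mathcal H^2(\Omega_2)+\sum_{p'}\diam(p')^2<\infty$ --- no metric lower bound on the diameter of individual curves is ever needed. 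The degenerate estimate is then proved on the \emph{circle-domain} side: if $\hat f(p)$ were a point, then nested annuli around it together with the uniform bound of Proposition \ref{unibdd} (applicable because disks are uniformly fat, using also the countable connectedness of $D$) give $\Mod_\mathrm{T}\hat f(\Gamma)=0$ as in Proposition \ref{asyb}, contradicting Theorem \ref{qcinvariance}. If you want to keep your two-step structure, you would have to replace the ``surrounding loop has diameter $\geqslant d_0$'' claim by an argument of this coarea type on the source side.
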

	See Remark \ref{diam2yx} for how to extract the proof of Theorem \ref{dimi2} from the proof of Theorem \ref{main}. Theorem \ref{dimi2} is no longer true if the countable connectedness assumption is removed from the definition of $\mathcal{F}_2(X)$, even when $X=\mathbb{S}^2$, see \cite[Theorem 4.1]{GehMar85}.  
	
	Our next result shows that the exponent $\alpha=2$ in Theorem \ref{dimi2} is sharp even when $X=\mathbb{S}^2$. 
	
	\begin{theorem}\label{nontop2}
		For every $\alpha>2$ there is a domain $U_\alpha \in \mathcal{F}_\alpha(\mathbb{S}^2)$, such that the closed unit disk $\overline{\mathbb{D}} \in \mathcal{C}_N(U_\alpha)$ but $\hat{f}(\overline{\mathbb{D}} ) \in \mathcal{C}_P(D)$ for every conformal homeomorphism $f: U_\alpha \to D$ onto a circle domain $D \subset \mathbb{S}^2$.
	\end{theorem}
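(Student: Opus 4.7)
The plan is to construct $U_\alpha$ as a countably connected subdomain of $\mathbb{S}^2$ that accumulates a sequence of near-complete circular barriers onto $\partial\mathbb{D}$. I would take $\overline{\mathbb{D}}$ as one complementary component and, at each scale $n \geq 1$, place on the circle $C_n := \{|z| = 1 + 2^{-n}\}$ exactly $n$ equally spaced disjoint closed circular arcs $A_{n,1}, \ldots, A_{n,n}$ separated by $n$ equal angular gaps of size $\gamma_n := 2^{-n}$. Setting
$$ U_\alpha := \mathbb{S}^2 \setminus \Bigl(\overline{\mathbb{D}} \cup \bigcup_{n,k} A_{n,k}\Bigr), $$
the bound $\diam(A_{n,k}) \asymp 2\pi/n$ yields $\sum_p \diam(p)^\alpha \asymp 2^\alpha + \sum_n n^{1-\alpha}$, which is finite precisely when $\alpha > 2$; hence $U_\alpha \in \mathcal{F}_\alpha(\mathbb{S}^2)$. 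Observe also that $\sum_p \diam(p)^2 = \infty$, so Theorem \ref{dimi2} does not apply and leaves room for the collapse of $\overline{\mathbb{D}}$.

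Next I would show that the conformal modulus of the curve family $\Gamma$ in $U_\alpha$ from $\partial\mathbb{D}$ to $\{|z|=2\}$ vanishes. Choose pairwise disjoint nested annular neighborhoods $B_n$ of the circles $C_n$ (e.g.\ $B_n := \{1 + 3\cdot 2^{-n-2} < |z| < 1 + 3\cdot 2^{-n-1}\}$), and let $\Gamma_n$ be the crossing subfamily in $B_n$. Additivity of extremal length on disjoint nested annuli gives
$$ \Mod(\Gamma)^{-1} \;\geq\; \sum_n \Mod(\Gamma_n)^{-1}. $$
Every $\gamma \in \Gamma_n$ must cross $C_n$ through one of the $n$ angular gaps of width $\gamma_n = 2^{-n}$; since this width is comparable to the annular width of $B_n$, the scale-invariant modulus through any one gap is bounded by a constant $C_0 < \infty$, and subadditivity of modulus over the $n$ parallel gaps yields $\Mod(\Gamma_n) \leq C_0 n$. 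Hence $\sum_n \Mod(\Gamma_n)^{-1} \geq C_0^{-1}\sum_n 1/n = \infty$, and $\Mod(\Gamma) = 0$.

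Finally, for any conformal homeomorphism $f : U_\alpha \to D$ onto a circle domain, conformal invariance yields $\Mod(f(\Gamma)) = 0$, where $f(\Gamma)$ is the full family of curves in $D$ from $\partial \hat{f}(\overline{\mathbb{D}})$ to the Jordan curve $f(\{|z|=2\})$. If $\hat{f}(\overline{\mathbb{D}}) \in \mathcal{C}_N(D)$, it would be a closed round disk $K$ of positive radius; a capacity argument applied to a small round annular collar around $\partial K$ in $D$—in which the complementary disk components have diameters tending to zero and total radius-squared bounded by the collar area—would produce $\Mod(f(\Gamma)) > 0$, a contradiction. Thus $\hat{f}(\overline{\mathbb{D}}) \in \mathcal{C}_P(D)$. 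The main obstacle I expect is this last step: rigorously establishing positivity of the modulus in the image circle domain in spite of possibly countable accumulation of disk components on $\partial K$, which should follow either from a quantitative annular comparison or from an appeal to the transboundary extremal length framework of \cite{schramm1995transboundary}.
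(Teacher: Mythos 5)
Your construction and the $\ell^\alpha$ computation are in the same spirit as the paper's (arcs on circles $1+2^{-n}$ accumulating on $\partial\mathbb{D}$, about $n$ arcs of diameter $\asymp 1/n$ per level), and your serial/parallel estimate does show that the \emph{ordinary} conformal modulus of the family $\Gamma$ of curves in $U_\alpha$ joining $\partial\mathbb{D}$ to $\{|z|=2\}$ vanishes. The genuine gap is in the final step, which you yourself flag: if $K:=\hat f(\overline{\mathbb{D}})$ is a nondegenerate disk, it does \emph{not} follow that the ordinary modulus of the curves of $D$ joining $\partial K$ to $f(\{|z|=2\})$ is positive. Those curves must avoid every complementary disk of $D$, and the disks may accumulate on $\partial K$ in concentric ``barriers'' with tiny gaps, exactly mimicking your arcs; one can build circle domains in which such a family has zero modulus, and nothing in your collar argument excludes that the image is of this type. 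The control you invoke, $\sum \operatorname{diam}(q)^2<\infty$ (equivalently, cofatness of the round disks plus finite area), is precisely the hypothesis that yields positivity of the \emph{transboundary} modulus (this is the content of Lemma \ref{pm} and Remark \ref{diam2yx}); it gives nothing for the ordinary modulus, since transboundary curves are allowed to pass through the disks while ordinary curves are not. So the contradiction cannot be closed with the classical modulus on both sides.

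If you instead work with the transboundary modulus throughout (so that Theorem \ref{qcinvariance} plus the Lemma \ref{pm}-type coarea/Cauchy--Schwarz argument gives positivity in the image), then your source-side computation breaks: a transboundary curve may cross each barrier circle by hopping through an arc, paying only the weight assigned to that arc, so the serial/parallel estimate no longer applies. One must design an admissible function assigning weights $\frac{1}{(k+1)\log(k+1)}$ to the level-$k$ arcs and logarithmic ring weights around each gap, and verify that every curve pays a divergent amount whether it crosses through arcs or through gaps (the paper's Lemma \ref{adm}). For this the gap widths must be extremely small relative to the arc scale: the paper takes angular gaps of order $e^{-2^k}2025^{-k}$, so that each gap sits inside a conformal annulus of modulus $\gtrsim 2^k$, making the ring-weight energies summable while each gap passage still costs a definite amount; with your gaps of size $2^{-n}$, comparable to the spacing between consecutive circles, the rings around the gaps have bounded modulus and overlap neighboring levels, and the vanishing of the transboundary modulus is no longer clear. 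In short, the missing idea is the transboundary framework on both sides together with the superexponentially small gaps that make the source-side transboundary modulus zero; as written, the proposal's concluding positivity claim is unsupported and is the crux of the theorem.
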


	Finally, we demonstrate the failure of the second half of Condition \eqref{claimi2} in any $\mathcal{F}_\alpha(\mathbb{S}^2)$.
	\begin{theorem}\label{ptonon2}
		For every $\alpha>0$ there is a domain $\Omega_\alpha \in \mathcal{F}_\alpha(\mathbb{S}^2)$ such that $\{0\} \in \mathcal{C}_P(\Omega_\alpha)$ but $\hat{f}(\{0\}) \in \mathcal{C}_N(D)$ for every conformal homeomorphism $f: \Omega_\alpha \to D$ onto a circle domain $D \subset \mathbb{S}^2$.
	\end{theorem}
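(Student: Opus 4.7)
My plan is to construct $\Omega_\alpha\subset\mathbb S^2$ as the complement of $\{0\}$ together with countably many small closed disks arranged in near-complete ``necklaces'' around $\{0\}$. Concretely, at geometric scales $r_n=c^{-n}$ (for a fixed $c>1$ large enough to guarantee inter-scale disjointness), I place $N_n$ equally spaced closed disks $K_{n,1},\ldots,K_{n,N_n}$ of radius $\rho_n$ on the circle $\{|z|=r_n\}$, leaving only a small angular gap of relative measure $\eta_n$. Geometrically, the necklaces form nested ``traps'' enclosing $\{0\}$, and a conformal uniformization should reflect this blocking by realizing $\{0\}$ as a non-degenerate component of $\mathbb S^2\setminus D$.

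Given $\alpha>0$, choose $b\in(0,\alpha/(1-\alpha))$ when $\alpha<1$ (any $b>0$ when $\alpha\geq 1$), and set $N_n=\lfloor 2^{bn}\rfloor$, $\rho_n=\pi r_n(1-\eta_n)/N_n$, $\eta_n=2^{-cn}$ with $c>b$. (For small $n$ we may replace $N_n$ by a constant to enforce disjointness across scales; this is a finite adjustment.) A direct calculation yields
\[
\sum_n N_n(2\rho_n)^\alpha\lesssim\sum_n 2^{n[b(1-\alpha)-\alpha]}<\infty,
\]
so $\Omega_\alpha\in\mathcal F_\alpha(\mathbb S^2)$. Since $\mathbb S^2\setminus\Omega_\alpha=\{0\}\sqcup\bigsqcup_{n,i}K_{n,i}$ is a disjoint union of closed sets with $\{0\}$ disjoint from every $K_{n,i}$, the singleton $\{0\}$ is a connected component of diameter zero, i.e.\ $\{0\}\in\mathcal C_P(\Omega_\alpha)$, and $\Omega_\alpha$ is countably connected.

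For the key claim $\hat f(\{0\})\in\mathcal C_N(D)$ under every conformal $f\colon\Omega_\alpha\to D$, I would apply Schramm's transboundary extremal length \cite{schramm1995transboundary}. Let $\Gamma$ be the family of curves in $\hat\Omega_\alpha$ separating $\{0\}$ from $\pi_{\Omega_\alpha}(\partial B(0,1))$. Using a density $\rho(z)\propto 1/|z|$ on the annular gap regions combined with weights $w(K_{n,i})\sim\eta_n$ on necklace disks produces an admissible pair: every curve in $\Gamma$ crosses each necklace either through a gap of total width $\sim r_n\eta_n$ (contributing $\sim\eta_n$ to the $\rho$-integral) or through one of the disks (contributing $\eta_n$ from the weight). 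The transboundary energy
\[
\int\rho^2+\sum_{n,i} w(K_{n,i})^2\operatorname{area}(K_{n,i})\lesssim \sum_n\eta_n+\sum_n N_n\eta_n^2\rho_n^2
\]
converges by $c>b$, so the transboundary modulus of $\Gamma$ is finite. Conformal invariance of the transboundary modulus (Schramm's theorem) transfers this finite bound to the corresponding family in $\hat D$. If $\hat f(\{0\})$ were a point component of $D$, admissible pairs around it could be made with arbitrarily small energy by shrinking density support toward the point, yielding infinite modulus and contradicting the finite bound. Hence $\hat f(\{0\})$ must be a non-degenerate component of $\mathbb S^2\setminus D$.

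The main obstacle is the simultaneous satisfaction of three constraints as $\alpha\to 0^+$: blocking requires large $N_n$, summability forces small $\rho_n$, and finite transboundary energy constrains $\eta_n$ via $c>b$. The window $b\in(0,\alpha/(1-\alpha))$ shrinks but remains nonempty for every $\alpha>0$. A secondary technical point is verifying that Schramm's transboundary invariance, originally developed for conformal uniformizations into circle domains, correctly detects the point-vs.-disk dichotomy for $\hat f(\{0\})$ in the presence of accumulating complementary components, which is achieved by the explicit energy bound above.
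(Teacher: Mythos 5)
There is a genuine gap --- in fact your construction provably fails, and both modulus claims supporting it are incorrect. First, the finiteness claim $\Mod_\mathrm{T}(\Gamma)<\infty$ for the separating family in your $\Omega_\alpha$ is false. A curve separating $\{0\}$ from $\partial B(0,1)$ need not cross any necklace at all: every round circle $\{|z|=r\}$ with $r$ in the free annulus strictly between consecutive necklaces belongs to $\Gamma$ and meets no complementary component, so any admissible pair must give it $\rho$-length at least $1$ with no help from component weights. By Cauchy--Schwarz and integration in $r$, each free annulus then contributes at least a fixed constant (a logarithm of its radius ratio, which is bounded below since $\rho_n\ll r_n$ and $r_n=c^{-n}$) to $\int\rho^2$, and summing over $n$ gives infinite energy for \emph{every} admissible pair; hence $\Mod_\mathrm{T}(\Gamma)=\infty$. (Your per-necklace accounting assumed every separating curve crosses every necklace, which is not true, and your energy expression $\sum w(K)^2\operatorname{area}(K)$ is not the transboundary energy, which is $\sum_p\rho(p)^2$.) Second, the image-side claim is unjustified and internally inconsistent (``admissible pairs \ldots with arbitrarily small energy \ldots yielding infinite modulus''): in the transboundary setting it is not automatic that the separating family around a point component of a circle domain has infinite modulus, since curves may cross the accumulating complementary disks at the cost of their weights; controlling such effects is exactly what the cofatness estimates are for, and the paper only establishes the joining-family statement (Proposition \ref{asyb}), not a separating-family dual.

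Worse, your specific domain exhibits the \emph{opposite} behavior. Each necklace disk has diameter $\sim r_n2^{-bn}$, negligible compared with its distance $r_n$ to the origin, so the total radial logarithmic measure occupied by the disks is finite and small. Consequently $\rho_\epsilon(z)=\bigl(\log(1/\epsilon)\bigr)^{-1}|z|^{-1}$ on $\{\epsilon<|z|<1\}$, with all component weights zero, is (after a harmless normalization) admissible for the family of curves \emph{joining} $\{0\}$ to $\partial B(0,1)$ in $\hat\Omega_\alpha$, and its energy tends to $0$ as $\epsilon\to0$; hence the transboundary joining modulus from $\{0\}$ is zero. But if $\hat f(\{0\})$ were non-degenerate, the argument of Lemma \ref{pm} and Remark \ref{diam2yx} (circle domains are cofat, hence have square-summable complementary diameters) gives a positive lower bound for the joining modulus in $\hat D$, and conformal invariance (Theorem \ref{qcinvariance}) would force positivity back in $\hat\Omega_\alpha$. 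So for your $\Omega_\alpha$ one can prove $\hat f(\{0\})\in\mathcal{C}_P(D)$, the negation of the desired conclusion. The missing idea is that to blow up $\{0\}$ you must make the transboundary \emph{joining} modulus from $\{0\}$ positive while keeping $\sum_p\diam(p)^\alpha<\infty$; the paper does this with a binary tree of radial segments whose diameters are large relative to their distance to $0$ and whose parent--child angular gaps are so small that the connecting curve families have conformal modulus greater than $4^j$ (Lemma \ref{terveystalo}), combined with an averaging argument over branches of the tree to show that any unit-energy admissible function admits a concatenated curve of transboundary length at most $2$, yielding \eqref{zmo}. Necklaces of disks that are individually tiny relative to their distance to the origin cannot achieve this, no matter how the parameters $N_n,\rho_n,\eta_n$ are tuned.
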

	Theorem \ref{ptonon2} generalizes a construction of Ntalampekos \cite[Section 6]{ntalampekos2023rigidity}, which covers the case $\alpha=2$.
	
	\subsection*{Acknowledgment}
We thank the referee for helpful comments. The first named author wishes to express his gratitude to his co-advisor, Professor Pekka Koskela, for his guidance and encouragement.



	\section{Preliminaries}\label{ybzs}
	
We denote the open ball in a metric space $X$ with center $x_0 \in X$ and radius $r>0$ by $B(x_0,r)$, and $S(x_0,r):=\{x \in X: d(x_0,x)=r\}$. The metric on $X$ is denoted by $d$, and we equip the standard $\mathbb{S}^2$ with the spherical metric $d_{\mathbb{S}^2}$. We call $X$ \emph{Ahlfors 2-regular} if there is an $\alpha \geq1$ such that
	\begin{equation}\label{ahlfors}
		\alpha^{-1} R^2 \leq \mathcal{H}^2(B(x, R)) \leq \alpha R^2
	\end{equation}
	for all $x \in X$ and $0 < R< \diam(X)$, and \emph{upper Ahlfors 2-regular} if the second inequality holds in \eqref{ahlfors}.

	
	\subsection{Conformal Modulus}
	 We recall the conformal modulus of curve families and give the geometric definition of quasiconformal maps.
	
	\begin{definition}[Conformal modulus]
		Let $\Gamma \subset X$ be a family of curves in a metric two-sphere $X$. The conformal modulus of $\Gamma$ is
		$$\Mod_\mathrm{C}(\Gamma)=\inf_{\rho \in Adm_\mathrm{C}(\Gamma)} \int_X \rho^2 \, d\mathcal{H}^2,$$
		where $Adm_\mathrm{C}(\Gamma)$ is the collection of admissible functions for $\Gamma$, i.e., all Borel functions $\rho: X \to [0,\infty]$ satisfying 
		$$\int_{\gamma} \rho \, ds \geq 1 \quad \quad \text{for every locally rectifiable curve }\gamma \in \Gamma. $$
		\end{definition}
	
	We now give the \emph{geometric definition of quasiconformality}. We assume that $X$ and $Y$ are metric two-spheres. 
	
	\begin{definition}\label{gd}
		A homeomorphism $f: \Omega \to D$ between domains $\Omega \subset X$ and $D \subset Y$ is \emph{$K$-quasiconformal}, or $K$-QC, if
		\begin{eqnarray*}
			K^{-1} \Mod_\mathrm{C}(\Gamma) \leq \Mod_\mathrm{C}(f \Gamma) \leq K \Mod_\mathrm{C}(\Gamma)
		\end{eqnarray*}
		for every curve family $\Gamma$ in $\Omega$. Here $f\Gamma :=\{f \circ \gamma: \gamma \in \Gamma \} $.
	\end{definition}
	
    We can also define quasiconformal maps between metric spaces via the so-called metric definition (see \cite[Section 14]{heinonensobolev}). In the setting of this paper, the geometric definition is more natural due to the uniformization theory which has recently been developed using modulus.

    We will apply the following uniformization result by the second author \cite[Theorem 1.4-1.6]{rajala2017uniformization} to establish the existence of quasiconformal maps in Theorem \ref{main}. The sharp constant $\frac{\pi}{2}$ was proved by Romney \cite{romney2019quasiconformal}.
    
    \begin{theorem}\label{rajala}
    	Suppose that $X$ is an upper Ahlfors 2-regular metric two-sphere. Then there exists a $\frac{\pi}{2}$-QC map $h:X \to\mathbb{S}^2$.
    \end{theorem}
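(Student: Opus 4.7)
The plan is to construct the map $h$ via an extremal length argument, following the framework pioneered by Rajala and the sharp constant refinement of Romney. The first step is to verify that upper Ahlfors $2$-regularity implies the \emph{reciprocality condition}: for every topological quadrilateral $Q \subset X$ with opposite side pairs $(\zeta_1,\zeta_3)$ and $(\zeta_2,\zeta_4)$, one needs a universal upper bound
\[
\Mod_\mathrm{C}(\Gamma(\zeta_1,\zeta_3;Q)) \cdot \Mod_\mathrm{C}(\Gamma(\zeta_2,\zeta_4;Q)) \leqslant \frac{\pi}{2},
\]
together with an analogous control on the modulus of curve families degenerating to a single point. The upper bound comes from testing against admissible functions built from minimal weak upper gradients of candidate coordinate functions and integrating via a coarea-type formula against $\mathcal{H}^2$; upper $2$-regularity is exactly what controls the error term in the Cauchy--Schwarz step of this inequality, and the sharp constant $\pi/2$ arises as the extreme ratio between $\mathcal{H}^2$ and Euclidean area realized on an $\ell^\infty$-normed plane.

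Second, given reciprocality, I would produce a global parametrization. Fix four boundary arcs on a topological disk $U \subset X$ and solve two dual extremal problems: minimize $\int \rho^2\, d\mathcal{H}^2$ over $\rho$ admissible for the \emph{horizontal} curve family, and symmetrically for the \emph{vertical} family. The minimizers exist by lower semicontinuity of the modulus integral on $L^2(X)$, and path-integration against them produces a pair of continuous functions $u,v\colon U \to \mathbb{R}$ which---thanks to reciprocality---combine into a homeomorphism onto a Euclidean rectangle. Covering $X$ by two such topological disks (northern and southern hemispheres) and gluing via uniqueness of the extremal functions yields the global map $h\colon X \to \mathbb{S}^2$.

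Finally, the quasiconformal distortion of $h$ with the sharp constant $K=\pi/2$ is inherited from the sharp reciprocality bound: a standard decomposition argument reduces the comparison of $\Mod_\mathrm{C}(\Gamma)$ and $\Mod_\mathrm{C}(h\Gamma)$ to the case of quadrilateral curve families, where the ratio of moduli is governed precisely by the reciprocality constant. The matching lower bound on distortion is trivial since $h$ is already a homeomorphism onto the standard sphere.

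The main obstacle is the sharp reciprocality inequality. Without linear local connectedness or a lower area bound, even obtaining \emph{some} finite reciprocality constant requires a delicate slicing argument in an a priori very rough space; Rajala's original method gives a non-sharp constant, and Romney's refinement, which exploits a precise comparison between a weak upper gradient and the metric speed of curves in $X$, is what extracts the optimal value $\pi/2$ and makes it attain equality for the $\ell^\infty$-compactification example mentioned after the statement of Theorem \ref{main}.
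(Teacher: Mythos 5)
You should first note that the paper does not prove this statement at all: Theorem \ref{rajala} is quoted verbatim from Rajala \cite[Theorems 1.4--1.6]{rajala2017uniformization}, with the sharp constant $\frac{\pi}{2}$ supplied by Romney \cite{romney2019quasiconformal}. So the expected ``proof'' here is a citation, and your proposal is really an attempt to reconstruct those two papers. As a reconstruction it does capture the correct skeleton (reciprocality of upper $2$-regular surfaces, extremal ``harmonic'' coordinates on quadrilaterals, a limiting/gluing step to get a global homeomorphism), but two of your central claims are incorrect, and they are exactly the points where the sharp constant is supposed to come from.

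First, the ``sharp reciprocality inequality'' $\Mod_\mathrm{C}\Gamma(\zeta_1,\zeta_3;Q)\cdot\Mod_\mathrm{C}\Gamma(\zeta_2,\zeta_4;Q)\leqslant\frac{\pi}{2}$ is false, already in the model space you invoke for sharpness: on $(\mathbb{R}^2,\ell^\infty)$ one has $\mathcal{H}^2=\frac{\pi}{4}\lambda$, and for the square with vertices $(0,0),(1,1),(0,2),(-1,1)$ each of the two conjugate families joining opposite sides has modulus exactly $\frac{\pi}{2}$ (the constant function $\rho\equiv 1$ is admissible since opposite sides are at $\ell^\infty$-distance $1$, and the family of parallel diagonal segments gives the matching lower bound), so the product equals $\frac{\pi^2}{4}>\frac{\pi}{2}$. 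In \cite{rajala2017uniformization} the quadrilateral product bound is proved with a constant depending on the regularity constant $\alpha$, and the dilatation $\frac{\pi}{2}$ is \emph{not} extracted from a sharp product bound: in \cite{romney2019quasiconformal} the two inequalities of Definition \ref{gd} are established separately for the constructed map, with constants coming from universal facts valid on every metric surface (the isodiametric-type bound $\mathcal{H}^2(E)\leqslant\frac{\pi}{4}\diam(E)^2$ for one direction and a coarea-type estimate with constant $\frac{4}{\pi}$ for the other), which is precisely why the final constant $\frac{\pi}{2}$ does not depend on $\alpha$. Second, your closing remark that ``the matching lower bound on distortion is trivial since $h$ is already a homeomorphism'' is a genuine gap: a homeomorphism between metric surfaces satisfies no modulus inequality whatsoever (if it did, reciprocality would be vacuous and every metric sphere would be quasiconformally equivalent to $\mathbb{S}^2$); both directions in Definition \ref{gd} require proof, and in the cited works each needs its own argument. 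Finally, gluing two hemispherical charts ``by uniqueness of the extremal functions'' is not how the global map is obtained in \cite{rajala2017uniformization} and would itself need substantial justification; for the purposes of this paper, the correct move is simply to cite the two references.
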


    We also need the stability of quasiconformal maps under locally uniform convergence. See \cite[Theorem 7.1]{koskela2009lectures} for a proof of the following proposition.
    \begin{proposition}\label{stable}
    	Let $f_j: \Omega \to D_j\subset \mathbb{S}^2$ be $K$-QC for each $j \geq 1$, and suppose that $f_j $ converges to $ f: \Omega \to D\subset \mathbb{S}^2 $ locally uniformly. If $f$ is a homeomorphism, then $f$ is $K$-QC.
    \end{proposition}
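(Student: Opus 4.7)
The plan is to verify the two conformal-modulus inequalities of Definition \ref{gd} for $f$ directly, using the uniform quasiconformality of the approximants together with a lower-semicontinuity property for modulus under locally uniform parametrized convergence. It suffices to establish, for every curve family $\Gamma$ in $\Omega$,
\begin{equation*}
\Mod_\mathrm{C}(f\Gamma) \leqslant \liminf_{j \to \infty} \Mod_\mathrm{C}(f_j \Gamma);
\end{equation*}
then $\Mod_\mathrm{C}(f\Gamma) \leqslant K \Mod_\mathrm{C}(\Gamma)$ is immediate from $\Mod_\mathrm{C}(f_j\Gamma) \leqslant K \Mod_\mathrm{C}(\Gamma)$, and the reverse bound $\Mod_\mathrm{C}(f\Gamma) \geqslant K^{-1}\Mod_\mathrm{C}(\Gamma)$ is obtained by applying the same argument to the inverse homeomorphisms $f_j^{-1}\colon D_j \to \Omega$, which are themselves $K$-quasiconformal and converge locally uniformly to $f^{-1}$ on $D$ by a standard argument valid because $f$ is assumed to be a homeomorphism.

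To prove the lower-semicontinuity inequality, I would pick admissible densities $\rho_j$ for $f_j \Gamma$ with $\int_{\mathbb{S}^2} \rho_j^2 \, d\mathcal{H}^2 \leqslant \Mod_\mathrm{C}(f_j\Gamma) + 1/j$, reduce to the case where these integrals are uniformly bounded, and extract a weak $L^2(\mathbb{S}^2)$-limit $\rho$ along a subsequence. Lower semicontinuity of the $L^2$-norm then gives
\begin{equation*}
\int_{\mathbb{S}^2} \rho^2 \, d\mathcal{H}^2 \leqslant \liminf_{j \to \infty} \int_{\mathbb{S}^2} \rho_j^2 \, d\mathcal{H}^2.
\end{equation*}
The heart of the argument is to show that, possibly after modifying $\rho$ on a subfamily of $f\Gamma$ of modulus zero, $\rho$ is admissible for $f\Gamma$: for any fixed $\gamma \in \Gamma$ the curves $f_j \circ \gamma$ converge uniformly to $f \circ \gamma$ on compact subcurves, and one passes from $\int_{f_j \circ \gamma} \rho_j \, ds \geqslant 1$ to $\int_{f \circ \gamma} \rho \, ds \geqslant 1$ by combining Mazur's lemma (promoting weak $L^2$ convergence to strong convergence of convex combinations of the $\rho_j$), Fuglede's lemma (preserving line integrals off a modulus-zero family of curves), and the uniform parametrization convergence of the curves themselves.

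The main obstacle is precisely this last step: weak $L^2$ convergence does not automatically preserve line integrals along a fixed curve, let alone along a varying sequence $f_j \circ \gamma$. Reconciling density convergence on $\mathbb{S}^2$ with curve convergence in $\mathbb{S}^2$ is what forces the combined use of Mazur's and Fuglede's lemmas together with the uniform convergence of $f_j$, and constitutes the technical core underlying the reference \cite[Theorem 7.1]{koskela2009lectures}.
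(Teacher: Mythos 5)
Your overall strategy --- deduce both inequalities of Definition \ref{gd} from the single lower-semicontinuity statement $\Mod_\mathrm{C}(f\Gamma)\leqslant\liminf_{j}\Mod_\mathrm{C}(f_j\Gamma)$, applied also to the inverses --- is reasonable in outline, but the proposal never actually proves the statement everything rests on, and the tools you name do not suffice for it. Mazur's and Fuglede's lemmas concern a \emph{fixed} curve along which a sequence of densities converges strongly in $L^2$; here the admissibility hypothesis $\int_{f_j\circ\gamma}\rho_j\,ds\geqslant 1$ is an integral along a curve that \emph{changes with} $j$, and uniform convergence $f_j\circ\gamma\to f\circ\gamma$ on compacta gives no control over such line integrals: the lengths of $f_j\circ\gamma$ need not converge (they may blow up or oscillate), and $\rho,\rho_j$ are merely Borel functions determined up to $\mathcal{H}^2$-null sets, so integrals along uniformly close curves can differ arbitrarily. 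You flag exactly this as ``the main obstacle'' and then only assert that the three ingredients must be ``combined''; no mechanism for transferring admissibility to the limit curves is given, and I do not see one along these lines. That is a genuine gap, not a routine verification.

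Note also that the paper does not prove the proposition itself but cites \cite[Theorem 7.1]{koskela2009lectures}, and proofs of this kind of stability statement are organized precisely so as to avoid your obstacle: either one verifies quasiconformality through the Sobolev/upper-gradient characterization, where the relevant inequality $|u(\gamma(a))-u(\gamma(b))|\leqslant\int_\gamma g\,ds$ runs along a \emph{fixed} curve in the domain (so locally uniform convergence of $f_j$ controls the left-hand side, and weak limits of the upper gradients, via Mazur and Fuglede now legitimately applied to fixed curves, control the right-hand side), or one tests only with ring/condenser families, where monotonicity of modulus under slightly shrinking or enlarging the continua together with Hausdorff convergence $f_j(E)\to f(E)$ replaces any convergence of line integrals. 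Two smaller points you gloss over: the locally uniform convergence $f_j^{-1}\to f^{-1}$ on $D$ needs a topological (degree-type) argument, including that compact subsets of $D$ are eventually contained in $D_j$ so that curves of $f\Gamma$ make sense for $f_j^{-1}$; and curves of $\Gamma$ need not be compactly contained in $\Omega$, so the uniform convergence of $f_j\circ\gamma$ you invoke requires an exhaustion/subcurve reduction.
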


	\subsection{Transboundary Modulus}
	We will apply Schramm's transboundary modulus \cite{schramm1995transboundary}. The decomposition space $\hat{\Omega}$ associated to a domain $\Omega \subset X$ is defined in the introduction before Theorem \ref{main}.

	\begin{definition}
		Let $X$ be a metric two-sphere and fix a domain $\Omega \subset X$. The \emph{transboundary modulus} $\Mod_\mathrm{T}(\Gamma)$ of a family $\Gamma$ of curves in $\hat{\Omega}$ is 
		$$
		\Mod_\mathrm{T}(\Gamma)=\inf_{\rho \in Adm_\mathrm{T}(\Gamma)} \int_{\Omega} \rho^2 \, d\mathcal{H}^2 + \sum_{p \in \mathcal{C}(\Omega)} \rho(p)^2, 
		$$
		where $Adm_\mathrm{T}(\Gamma)$ is the collection of \emph{admissible functions for $\Gamma$}, i.e., Borel measurable functions $\rho\colon \hat{\Omega}\to [0,\infty]$ for which 
		$$
		1 \leq \int_{\gamma} \rho \, ds +\sum_{p \in \mathcal{C}(\Omega) \cap |\gamma|} \rho(p)  \quad \text{for all } \gamma \in \Gamma.  
		$$ 
	\end{definition}
	Here $|\gamma|$ denotes the trajectory of the curve $\gamma$ and $\int_{\gamma} \rho \, ds$ is the curve integral of the restriction of $\gamma$ to $\Omega$. More precisely, the restriction is 
	a countable union of disjoint curves $\gamma_j$, each of which maps onto a component of $|\gamma| \setminus \mathcal{C}(\Omega)$, and we can define 
	$$
	\int_{\gamma} \rho \, ds = \sum_j \int_{\gamma_j} \rho \, ds.  
	$$
	
	We will apply the quasi-invariance of the transboundary modulus under quasiconformal maps between metric surfaces. 
	\begin{theorem}\label{qcinvariance}
		Let $\Omega$ and $D$ be subdomains of metric surfaces $X$ and $Y$, respectively. If $f: \Omega \to D$ is a $K$-QC map, then for every curve family $\Gamma$ in $\hat{\Omega}$, we have 
		\begin{eqnarray*}
			K^{-1} \Mod_\mathrm{T}(\Gamma) \leq \Mod_\mathrm{T}(f \Gamma) \leq K \Mod_\mathrm{T}(\Gamma).
		\end{eqnarray*}
	\end{theorem}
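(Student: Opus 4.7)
The plan is to transfer admissible densities between $\hat{\Omega}$ and $\hat{D}$, matching the discrete part via the bijection $\hat f$ on components and controlling the continuous part via the quasiconformality of $f$. Since $f^{-1}\colon D \to \Omega$ is also $K$-$QC$ (apply Definition \ref{gd} to $\Gamma = f^{-1}\Gamma'$), it suffices to prove the upper bound $\Mod_\mathrm{T}(f\Gamma) \leqslant K \Mod_\mathrm{T}(\Gamma)$; the lower bound follows symmetrically by applying this to $f^{-1}$ and the family $f\Gamma$.

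Fix $\rho \in Adm_\mathrm{T}(\Gamma)$ with mass close to $\Mod_\mathrm{T}(\Gamma)$, and construct $\tilde\rho \in Adm_\mathrm{T}(f\Gamma)$ with mass at most $K$ times that of $\rho$. On components, set $\tilde\rho(q) := \rho(\hat f^{-1}(q))$ for each $q \in \mathcal{C}(D)$; the bijectivity of $\hat f\colon\mathcal{C}(\Omega)\to\mathcal{C}(D)$ together with the fact that it preserves component crossings along curves yields, for every $\gamma \in \Gamma$,
$$
\sum_{q \in \mathcal{C}(D) \cap |\hat f \gamma|} \tilde\rho(q) = \sum_{p \in \mathcal{C}(\Omega) \cap |\gamma|} \rho(p),
$$
so the discrete contribution $\sum_{p} \rho(p)^2$ transfers verbatim.

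The heart of the proof is the construction of $\tilde\rho|_D$. For this I would invoke the analytic framework for metric $K$-$QC$ maps developed in \cite{rajala2017uniformization, romney2019quasiconformal}: the inverse $f^{-1}$ admits a minimal weak upper gradient $g$ on $D$ satisfying the upper gradient inequality along $\Mod_\mathrm{C}$-a.e.\ curve in $D$, together with a pointwise distortion bound that, via the area-wise change of variables, gives $\int_D (\rho \circ f^{-1})^2 g^2 \, d\mathcal{H}^2 \leqslant K \int_\Omega \rho^2 \, d\mathcal{H}^2$. Setting $\tilde\rho(y) := \rho(f^{-1}(y))\, g(y)$ on $D$ and applying the upper gradient inequality yields $\int_{f\gamma_j} \tilde\rho \, ds \geqslant \int_{\gamma_j} \rho \, ds$ along every subarc $\gamma_j$ of $\gamma \cap \Omega$ for every $\gamma$ outside an exceptional family. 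Summing over $j$ and combining with the exact discrete transfer gives admissibility of $\tilde\rho$ along all of $f\Gamma$ save this exceptional subfamily.

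The main obstacle is precisely the continuous-part construction, which requires the analytic structure of QC maps (upper gradients, Jacobians, distortion inequalities) rather than only the geometric modulus definition. A secondary point to check is that the exceptional subfamily—arising from a curve family of zero conformal modulus in $D$—also has zero transboundary modulus in $\hat D$; this follows because any density admissible for that conformal modulus zero family on $D$, extended by zero on $\mathcal{C}(D)$, is admissible for the corresponding transboundary family in $\hat D$, forcing its transboundary modulus to vanish.
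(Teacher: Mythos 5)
Your proposal is correct and follows essentially the same route as the paper: reduce to one inequality using that $f^{-1}$ is $K$-$QC$, transfer the discrete part verbatim via $\hat f$ on components, and define $\tilde\rho = (\rho\circ f^{-1})\,g_{f^{-1}}$ on $D$ using the Newtonian--Sobolev regularity of $f^{-1}$ (the paper cites Williams' theorem), the upper gradient inequality for admissibility, and the pointwise bound $g_{f^{-1}}^2 \leqslant K J_{f^{-1}}$ with the change of variables for the mass estimate. Your explicit treatment of the exceptional curve family (zero conformal modulus implies zero transboundary modulus) is a detail the paper leaves implicit in its ``for almost every $\widetilde\gamma$'' phrasing, and it is handled correctly.
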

	
	The proof of Theorem \ref{qcinvariance} involves notions from the Sobolev theory in metric measure spaces, such as upper gradients. Since we will not apply such notions elsewhere in this paper, we do not give the definitions here. Instead, we refer to \cite{heinonensobolev,williams2012geometric} for the definitions and basic properties. 
	\begin{proof}
		Let $\rho \in Adm_\mathrm{T}(\Gamma)$. By Definition \ref{gd}, $f^{-1}: D\to \Omega$ is also $K$-QC. It then follows from Williams' theorem \cite[Theorem 1.1]{williams2012geometric} that $f^{-1}$ belongs to the Newtonian 
		Sobolev space $N_{loc}^{1,2}(D, \Omega)$. Moreover, for $\mathcal{H}^2$-almost every $ z\in D$, we have
		\begin{equation}\label{upp}
			g_{f^{-1}}(z)^2 \leq K \cdot J_{f^{-1}}(z).
		\end{equation}
		Here $g_{f^{-1}}$ and $J_{f^{-1}}$ denote the minimal weak upper gradient and the volume
		derivative of $f^{-1}$, respectively. 
		
		Then we can define $\tilde{\rho}(z)=\rho(f^{-1}(z)) \cdot g_{f^{-1}}(z)$ if $z \in D$ and $\tilde{\rho}(p)=\rho(\hat{f}^{-1}(p))$ if $p\in \mathcal{C}(D)$. We claim that $\tilde{\rho}: \hat{D}\to[0,\infty]$ is admissible for $\hat{f}\Gamma$. Indeed, for almost every $\widetilde{\gamma}:=\hat{f} \circ \gamma \in \hat{f}\Gamma$, we have
		$$\begin{aligned} \int_{\widetilde{\gamma}} \tilde{\rho} \,d s+\sum_{p\in \mathcal{C}(D) \cap|\widetilde{\gamma}|} \tilde{\rho}(p) & =\int_{\widetilde{\gamma}} \rho\left(f^{-1}(z)\right)\cdot g_{f^{-1}}(z)\,d s+\sum_{\bar{p} \in \mathcal{C}(\Omega) \cap |\gamma|} \rho(\bar{p}) \\ & \geq \int_\gamma \rho(u) \,d s+\sum_{\bar{p} \in \mathcal{C}(\Omega) \cap |\gamma|}  \rho(\bar{p}) \\ & \geq 1.\end{aligned}$$
		
		Here the first inequality holds since $g_{f^{-1}}$ is a weak upper gradient of $f^{-1}$. Combining with \eqref{upp}, we can estimate $\Mod_\mathrm{T}(\hat{f}\Gamma)$:
		$$\Mod_\mathrm{T} (\hat{f}\Gamma) \leq \int_D (\tilde{\rho})^2 \,d \mathcal{H}^2+\sum_{p \in \mathcal{C}(D)} \tilde{\rho}(p)^2 \leq K\Big[\int_{\Omega} \rho^2 \,d \mathcal{H}^2+\sum_{p \in \mathcal{C}(\Omega)}{\rho}(p)^2 \Big]$$
		for all $\rho\in Adm_\mathrm{T}(\Gamma)$, so we have $\Mod_\mathrm{T}(\hat{f} \Gamma) \leq K \Mod_\mathrm{T}(\Gamma)$. The reverse inequality $\Mod_\mathrm{T}(\Gamma) \leq K \Mod_\mathrm{T}(\hat{f}\Gamma)$ is proved in a similar way. \end{proof}


 \section{Approximation with finitely connected domains}\label{existence}
    In this section we start the proof of Theorem \ref{main}. We use the following approximation scheme.
    
   Fix a cofat domain $\Omega \subset X$ on an upper Ahlfors 2-regular metric two-sphere. By Theorem \ref{rajala} there exists a $\frac{\pi}{2}$-QC homeomorphism $h: \Omega \to \Omega'$ onto a domain $\Omega':= h(\Omega) \subset \mathbb{S}^2$.
    	
    	 In order to prove Theorem \ref{main}, we may assume that $\card \mathcal{C}_N(\Omega)=\infty$, since otherwise the existence of a conformal homeomorphism $g: \Omega'\to D$ onto a circle domain $D \subset \mathbb{S}^2$ follows from Koebe's theorem \cite[Theorem 9.5]{bonk2011uniformization}, so we get a $\frac{\pi}{2}$-QC map $f:=g \circ h: \Omega \to D$; notice that Definition \ref{gd} yields that the composition of a $K$-QC map and a conformal map is $K$-QC.  
    	 
    	 Since $\Omega$ is cofat, each $p \in C_N(\Omega)$ has positive Hausdorff $2$-measure. In particular, the collection $C_N(\Omega)$ is countable, and we can enumerate $C_N(\Omega)=\left\{p_0, p_1, p_2, \ldots\right\}$. Let $\Omega_{j} := X \setminus \left\{p_0, p_1, \ldots, p_j\right\} $ be a sequence of finitely connected domains, so that $\Omega_1 \supset \Omega_2 \supset \cdots \supset \Omega$. 
	 
	 In order to prove Theorem \ref{main},  we may assume that  
	 $$
    	\widetilde{\Omega}:= X \setminus \overline{\bigcup_{p \in \mathcal{C}_N(\Omega)}  p}=\Omega. 
    	$$
    	Indeed, if we can prove the existence of a homeomorphism $f:\tilde \Omega \to D$ as in Theorem \ref{main}, then also the restriction of $f$ to $\Omega$ satisfies the conditions in Theorem \ref{main}. 
    	
	By Theorem \ref{rajala} we know that there are $\frac{\pi}{2}$-QC maps $h_j: \Omega_{j} \to \Omega_j'$, such that each $ \Omega_j' \subset \mathbb{S}^2$ is a finitely connected domain on $\mathbb{S}^2$.
    	Now by Koebe's theorem, we can find conformal homeomorphisms $g_j:\Omega_j' \rightarrow D_j \subset \mathbb{S}^2$ onto circle domains $D_j \subset \mathbb{S}^2$. Moreover, 
	$$q_{j,\ell}:=\hat{g}_j(\hat{h}_j(p_\ell))$$ is a disk for all $\ell=0,1,\ldots,j$; here $ \hat{h}_j: \hat{\Omega}_j \to \hat{\Omega}_j'$ and $\hat{g}_j: \hat{\Omega}_j' \to \hat{D}_j$ are the unique homeomorphic extensions of $h_j$ and $g_j$.
    	By postcomposing with Möbius transformations on $\mathbb{S}^2$, we may assume that 
    	\begin{equation} \label{normal}
    		q_{j,0}= \mathbb{S}^2 \setminus \mathbb{D} \quad \text{for all } j=1,2,\ldots. 
    	\end{equation}
    	
    	We get a sequence of quasiconformal maps $(f_j)_j$, where
    	$f_j := g_j \circ h_j: \Omega_j \rightarrow D_j$. Each $f_j$ maps $\Omega_j \subset X$ onto a circle domain $D_j \subset \mathbb{S}^2$.

    	For every $\ell=0,1,2,\ldots$, any subsequence of $(q_{j,\ell})_j$ has a further subsequence Hausdorff converging to a limit disk or a point. Therefore we can use a diagonal argument to find a subsequence $(f_{j_k})_k$, converging locally uniformly in $\Omega$, so that $q_{j_k,\ell}\to q_\ell$ for each $\ell$. We continue to denote the subsequence by $ (f_{j})_j$. By Condition \eqref{normal} and Proposition \ref{stable}, the limit map $f$ is non-constant and therefore a $\frac{\pi}{2}$-QC map from $\Omega$ onto a domain $D:= f(\Omega)\subset \mathbb{S}^2$. Each $q_\ell$, $\ell=0,1,2,\ldots$, is a disk or a point, and $q_0=\mathbb{S}^2 \setminus \mathbb{D}$. 

       Theorem \ref{main} now follows once we establish the following. 
   
       	\begin{theorem}\label{bz}
       	The $\frac{\pi}{2}$-QC map $f: \Omega \to D$ has the following properties:
       	\begin{eqnarray}
       		\label{pro1}
       		& & \diam(\hat{f}(p))=0  \quad \text{for all } p \in \mathcal{C}_P(\Omega), \\
       		\label{pro2}
       		& & q_\ell=\hat{f}(p_\ell) \quad \text{and}\quad \diam(q_\ell)>0  \quad  \text{for all } \ell=0,1,2,\ldots. 
       	\end{eqnarray} 
       	Here $\hat{f}: \hat{\Omega} \to \hat{D}$ denotes the homeomorphism induced by $f$.
       \end{theorem}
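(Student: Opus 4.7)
The plan is to prove Theorem \ref{bz} in three steps: (a) the set-theoretic containment $\hat f(p_\ell)\subseteq q_\ell$ for every $\ell\geqslant 0$; (b) the non-degeneracy $\diam(q_\ell)>0$, where cofatness enters essentially; (c) the property $\diam(\hat f(p))=0$ for each $p\in\mathcal{C}_P(\Omega)$, which is precisely \eqref{pro1}. Together (a) and (b) yield \eqref{pro2}, since $\hat f$ is a homeomorphism of decomposition spaces and thus maps each connected component of $X\setminus\Omega$ onto a connected component of $\mathbb{S}^2\setminus D$. Step (a) is essentially topological: given $z_k\to p_\ell$ in $X$ with $z_k\in\Omega$, for each fixed $j\geqslant\ell$ the continuous extension $\hat f_j$ sends $p_\ell$ to $q_{j,\ell}$, so $f_j(z_k)$ accumulates on $q_{j,\ell}$ as $k\to\infty$; a diagonal argument combining the locally uniform convergence $f_j\to f$ on $\Omega$ with the Hausdorff convergence $q_{j,\ell}\to q_\ell$ yields $f(z_k)\to q_\ell$.

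For step (b), I would argue by contradiction using Theorem \ref{qcinvariance}. Assume $\diam(q_\ell)=0$ for some $\ell\geqslant 1$; by \eqref{normal}, $q_\ell$ is then a single point in $\overline{\mathbb{D}}$. Consider in $\hat D_j$ the family $\Gamma_j^{\mathrm t}$ of curves connecting $q_{j,\ell}$ to $q_{j,0}=\mathbb{S}^2\setminus\mathbb{D}$. Since $D_j$ is a circle domain, $\Mod_\mathrm{T}(\Gamma_j^{\mathrm t})$ admits an upper bound of the form $C/\log(1/\diam q_{j,\ell})$, derived from a standard spherical annulus estimate around $q_{j,\ell}$, and this modulus tends to zero as $j\to\infty$. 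By Theorem \ref{qcinvariance}, the transboundary modulus of the source preimage $\hat f_j^{-1}(\Gamma_j^{\mathrm t})$---a family of curves in $\hat\Omega_j$ connecting $p_\ell$ to $p_0$---also vanishes in the limit. The main obstacle is to derive a uniform positive lower bound on this source modulus from cofatness of $p_\ell$ and $p_0$ combined with upper Ahlfors 2-regularity of $X$: any admissible $\rho$ must either place substantial atomic weight on the fat components (where cofatness converts $\rho(p)$ into an area-weighted contribution via $\mathcal{H}^2(p\cap B)\geqslant \tau r^2$) or deposit significant $L^2$-mass in a definite buffer region between $p_\ell$ and $p_0$, producing a lower bound depending only on $\tau$ and the upper Ahlfors constant.

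For step (c), I would use a complementary modulus comparison via connecting families. Suppose $\hat f(p)$ has positive diameter $d$ for some $p\in\mathcal{C}_P(\Omega)$. Consider the family $\Gamma_r$ of curves in $\hat\Omega$ from $B(p,r)\cap\Omega$ to $p_0$. On the source, a logarithmic test function on dyadic annuli around $p$, with atomic values of order $\diam(q)$ times the same logarithmic weight on complementary components, yields transboundary energy of order $1/\log(1/r)$; the estimate uses upper Ahlfors 2-regularity $\mathcal{H}^2(B(p,r))\leqslant \alpha r^2$ together with the cofatness-induced summability $\sum_{q\subset B(p,r)}\diam(q)^2\lesssim r^2/\tau$ to control the contribution from complementary components. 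Hence $\Mod_\mathrm{T}(\Gamma_r)\to 0$ as $r\to 0$. On the target, the image family $\hat f(\Gamma_r)$ consists of curves from the shrinking neighborhood $\hat f(B(p,r)\cap\Omega)$---whose closure in $\mathbb{S}^2$ contains $\hat f(p)$, a set of positive diameter $d$---to the large disk $q_0$. The transboundary modulus of $\hat f(\Gamma_r)$ is therefore bounded below by a positive constant depending only on $d$ and $\diam(q_0)$, uniformly for small $r$; this contradicts Theorem \ref{qcinvariance}. Combining (a), (b), and (c) with the homeomorphism property of $\hat f$ yields $\hat f(p_\ell)=q_\ell$ for each $\ell$, completing the proof.
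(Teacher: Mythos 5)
The main gap is your step (a). You claim the inclusion $\hat f(p_\ell)\subseteq q_\ell$ is ``essentially topological'' and follows from a diagonal argument, but locally uniform convergence $f_j\to f$ in $\Omega$ gives no uniformity along a sequence $z_k$ that leaves every compact subset of $\Omega$, which is exactly what happens when $z_k\to p_\ell$. To run the interchange of limits you would estimate
$d_{\mathbb{S}^2}(f(z_k),q_\ell)\leqslant d_{\mathbb{S}^2}(f(z_k),f_j(z_k))+d_{\mathbb{S}^2}(f_j(z_k),q_{j,\ell})+d_H(q_{j,\ell},q_\ell)$
with $j=j(k)\to\infty$, and the first term requires uniform control of $f_j-f$ at points approaching $\partial\Omega$ --- precisely the boundary equicontinuity that is not available and whose absence is the whole difficulty of the theorem. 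In the paper the ``soft'' direction is the reverse inclusion $q_\ell\subset\hat f(p_\ell)$, obtained from a quasiconformal Carath\'eodory kernel-convergence lemma; the inclusion you call topological is proved by contradiction, combining the circle-domain lower bound of Proposition \ref{cir} with the source-side vanishing of the transboundary modulus (Proposition \ref{asyb}, which rests on Proposition \ref{unibdd}, where cofatness and upper $2$-regularity enter) and the quasi-invariance Theorem \ref{qcinvariance}. Moreover, even granting (a) and (b), the equality $\hat f(p_\ell)=q_\ell$ also needs $q_\ell\cap D=\emptyset$ (again a kernel-convergence fact), not merely the homeomorphism property of $\hat f$.

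The remaining steps have the right overall shape (they mirror the paper's scheme of playing a vanishing modulus on one side against a positive bound on the other), but the substantive estimates are left unproved or are applied in a setting where they fail. In (b), an upper bound of the form $C/\log(1/\operatorname{diam}q_{j,\ell})$ for the transboundary modulus does not come from a single round-annulus estimate: curves may shortcut through the other complementary disks of $D_j$, and since these components move with $j$ one first needs a uniform-in-$j$ separation statement (the paper's Lemma \ref{separate}) before running the many-annuli averaging of Proposition \ref{asyb}; and the ``main obstacle'' you name, the uniform positive lower bound on the source side, is exactly the paper's Lemma \ref{pm} (a coarea plus H\"older argument using $\sum_p\operatorname{diam}(p)^2\leqslant\tau^{-1}\mathcal{H}^2(X)$ from cofatness), which you do not supply. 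In (c), the claimed lower bound for $\Mod_\mathrm{T}\hat f(\Gamma_r)$ in the limit decomposition $\hat D$, ``depending only on $d$ and $\operatorname{diam}(q_0)$,'' is unjustified: at that stage nothing is known about the complementary components of $D$, in particular no square-summability of their diameters, and lower bounds for the transboundary modulus of curves joining two continua genuinely fail without such control --- this is the very phenomenon exploited in Theorem \ref{nontop2}. The paper avoids this trap by proving its lower bounds only on the cofat sources $\hat\Omega_j$ (Lemma \ref{pm}) or on the circle domains $D_j$ (Proposition \ref{cir}), never on the unknown limit domain $D$.
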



	\section{Modulus estimates}\label{est}
	The aim of the following is to develop the estimates required for the proof of Theorem \ref{bz}. 
	
	In this section we assume that $X$ is a metric two-sphere that satisfies the upper Ahlfors $2$-regularity condition \eqref{ahlfors} with constant $\alpha \geq 1$. Recall that if $\Omega \subset X$ is a domain, then we denote the quotient map by $\pi_{\Omega}: X \to \hat{\Omega}$. We also denote by $A(x_0,r) \subset X$ the annulus $ B(x_0, 2r) \setminus\overline{B}(x_0, r)$. 
	
	Given a domain $\Omega \subset X$,  $\bar p \in \hat \Omega$, $x_0 \in \bar p$, and $r>0$, we denote the family of curves joining $\pi_{\Omega}(S(x_0,r))$ and $\pi_{\Omega}(S(x_0,2r))$ in 
	$\pi_{\Omega}({\overline{A}(x_0,r)})$ by $\Gamma_{x_0, r}$. Recall that $\bar p$ may or may not be a point of $\Omega$, and that if it is not then 
	we apply the notation for $\bar p$ also for $\pi^{-1}_{\Omega}(\bar p) \in \mathcal{C}(\Omega)$. The following estimate is the main technical result towards Theorem \ref{main}. 
	
	\begin{proposition} \label{unibdd}
		Let $\Omega \subset X$ be a finitely connected $\tau$-cofat domain. There is an $M>0$ depending only on $\alpha$ and $\tau$ so that $\Mod_\mathrm{T} \Gamma_{x_0, r} \leq M$.
	\end{proposition}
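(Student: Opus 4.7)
The plan is to exhibit an explicit admissible function $\rho \in Adm_\mathrm{T}(\Gamma_{x_0,r})$ whose transboundary energy is bounded by a constant depending only on $\alpha$ and $\tau$. The natural candidate is
$$\rho \equiv \tfrac{1}{r}\ \text{on}\ \Omega\cap\overline{A}(x_0,r),\qquad \rho(p)=\min\!\left(\tfrac{\diam(p)}{r},\,1\right)\ \text{for}\ p\in\mathcal{C}(\Omega)\ \text{meeting}\ \overline{A}(x_0,r),$$
and $\rho\equiv 0$ elsewhere.

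For admissibility, fix $\gamma\in\Gamma_{x_0,r}$. Since each fibre of $\pi_\Omega$ is connected and the decomposition of $X$ into points of $\Omega$ and components of $X\setminus\Omega$ is upper semi-continuous, the compact set $K:=\pi_\Omega^{-1}(|\gamma|)\subset X$ is connected and meets both $S(x_0,r)$ and $S(x_0,2r)$. The $1$-Lipschitz map $d(x_0,\cdot):X\to\mathbb{R}$ therefore sends $K$ onto a connected subset of $\mathbb{R}$ containing $[r,2r]$. Decomposing $K=(K\cap\Omega)\cup\bigcup_{p\in|\gamma|}p$, using that each $d(x_0,p)$ is an interval of length $\leqslant\diam(p)$ and that $\mathcal{H}^1(d(x_0,K\cap\Omega))\leqslant\sum_j\ell(\gamma_j)$, subadditivity of $\mathcal{H}^1$ on $[r,2r]$ yields
$$r\leqslant \sum_j\ell(\gamma_j)+\sum_{p\in|\gamma|}\diam(p).$$
If some $p\in|\gamma|$ satisfies $\diam(p)\geqslant r$, then $\rho(p)=1$ delivers admissibility immediately; otherwise every $\rho(p)=\diam(p)/r$, and dividing the display by $r$ is precisely the admissibility inequality.

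For the energy estimate, the continuous part is immediate from upper Ahlfors $2$-regularity:
$$\int_{\Omega\cap\overline{A}(x_0,r)}\rho^2\,d\mathcal{H}^2\leqslant \mathcal{H}^2(B(x_0,2r))/r^2\leqslant 4\alpha.$$
The discrete part is the technical heart of the argument and relies on cofatness. For each component $p$ meeting $\overline{A}(x_0,r)$, pick $a_p,b_p\in p$ with $d(a_p,b_p)\geqslant\diam(p)/2$; the open ball $B(a_p,d(a_p,b_p))$ excludes $b_p\in p$, so $\tau$-fatness of $p$ gives
$$\mathcal{H}^2\bigl(p\cap B(a_p,d(a_p,b_p))\bigr)\geqslant \tau\, d(a_p,b_p)^2\geqslant \tfrac{\tau}{4}\diam(p)^2.$$
When $\diam(p)\leqslant r$, this ball lies in a fixed dilate of $B(x_0,r)$, and pairwise disjointness of the components combined with upper $2$-regularity bounds $\sum_p\diam(p)^2/r^2$ by a constant depending only on $\alpha$ and $\tau$. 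When $\diam(p)>r$, applying cofatness to a ball of radius $r/2$ centred at a point of $p\cap\overline{A}(x_0,r)$ forces $\mathcal{H}^2(p\cap B(x_0,5r/2))\geqslant\tau r^2/4$, so disjointness limits the number of such components to one depending only on $\alpha,\tau$. Summing the three contributions produces the desired uniform $M=M(\alpha,\tau)$. The main obstacle is exactly this cofatness-to-area calibration: $a_p$ must be chosen so that $B(a_p,d(a_p,b_p))$ simultaneously fails to contain $p$ (to invoke $\tau$-fatness) and lies in a bounded dilate of $B(x_0,r)$ (so the upper $2$-regular bound may be harvested). Finite connectedness of $\Omega$ plays no role in this estimate; it will enter elsewhere in the proof of Theorem~\ref{bz}.
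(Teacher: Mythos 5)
Your proposal is correct and follows essentially the same route as the paper: the same test function ($1/r$ on the annulus, truncated $\diam(p)/r$ on components, with your cutoff at $\diam(p)\geqslant r$ in place of the paper's $r/10$), the same triangle-inequality/projection argument for admissibility, and the same cofatness-plus-upper-regularity bookkeeping that bounds the number of large components and the sum $\sum\diam(p)^2/r^2$ over small ones. The differences are only in constants and in the slightly more formal phrasing of admissibility via the connected preimage $\pi_\Omega^{-1}(|\gamma|)$.
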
 
	
	We postpone the proof of Proposition \ref{unibdd} until Section \ref{proofprop}. We now fix a $\tau$-cofat domain as in Theorem \ref{main}. Moreover, given $j=1,2,\ldots$, we let $\Omega_{j}$ be the finitely connected domain constructed in Section \ref{existence}. We fix $\bar p \in \hat{\Omega}$ and $x_0 \in \bar p$ as above, and choose radii $R,r>0$ such that $100r <R<\frac{\diam(X)}{2}$. We denote by $\Gamma_{R, r}^j$ the family of curves joining $\pi_{\Omega_j}(S(x_0,r))$ and $\pi_{\Omega_j}(S(x_0,R))$ in $\hat{\Omega}_j \setminus \{\bar p\}$.
	
		\begin{proposition}\label{asyb}
	The following estimate holds:
		\begin{eqnarray*}
			\limsup _{j \rightarrow \infty} \Mod_\mathrm{T} \Gamma_{R, r}^j \leq \lambda(r), \quad \text{ where } \lambda(r) \to 0 \text{ as } \; r \to 0.
		\end{eqnarray*}
	\end{proposition}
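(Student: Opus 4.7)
The plan is to decompose the macroscopic annulus $\{r<d(x_0,\cdot)<R\}$ into $N\sim\log_2(R/r)$ dyadic subannuli, apply Proposition \ref{unibdd} on each with a uniform constant, and combine the local bounds by a series-type inequality for the transboundary modulus. Concretely, set $N:=\lfloor\log_2(R/r)\rfloor - 1$ and take the disjoint dyadic annuli $A_k:=A(x_0,2^k r) = B(x_0,2^{k+1}r)\setminus\overline B(x_0,2^k r)$ for $k=0,1,\dots,N-1$, all contained in $B(x_0,R)\setminus\overline B(x_0,r)$. By continuity of $d(x_0,\cdot)$, every curve $\gamma\in\Gamma_{R,r}^j$ contains a subcurve $\gamma_k$ joining $\pi_{\Omega_j}(S(x_0,2^k r))$ to $\pi_{\Omega_j}(S(x_0,2^{k+1}r))$ inside $\pi_{\Omega_j}(\overline A_k)\setminus\{\bar p\}$, so $\gamma_k$ belongs to a subfamily of $\Gamma_{x_0,2^k r}^{\Omega_j}$.

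Each $\Omega_j$ is finitely connected and inherits the $\tau$-cofatness of $\Omega$, so Proposition \ref{unibdd} yields a uniform bound $\Mod_\mathrm{T}\Gamma_{x_0,2^k r}^{\Omega_j}\leqslant M=M(\alpha,\tau)$ for all $j$ and $k$, and this bound descends to the subfamily avoiding $\bar p$ by monotonicity of modulus. I would then pick near-optimal admissible functions $\rho_k \in Adm_\mathrm{T}(\Gamma_{x_0,2^k r}^{\Omega_j})$ supported on $\pi_{\Omega_j}(\overline A_k)$ with energy at most $M+\varepsilon$, and consider the candidate $\rho:=N^{-1}\sum_{k=0}^{N-1}\rho_k$ (extended by $\rho(\bar p)=0$ if $\bar p\in\mathcal C(\Omega_j)$). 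The subcurve decomposition gives
\begin{equation*}
\int_\gamma \rho\,ds + \sum_{p\in|\hat\gamma|\cap\mathcal C(\Omega_j)}\rho(p) \;\geqslant\; \frac1N\sum_{k=0}^{N-1}\Bigl[\int_{\gamma_k}\rho_k\,ds + \sum_{p\in|\hat\gamma_k|\cap\mathcal C(\Omega_j)}\rho_k(p)\Bigr] \;\geqslant\; 1,
\end{equation*}
so $\rho$ is admissible for $\Gamma_{R,r}^j$. The disjointness of the $A_k$ in $\Omega_j$ gives that the $\mathcal H^2$-integral part of the energy of $\rho$ is at most $M/N$, so the proposition will follow once the point-mass part $N^{-2}\sum_{p\in\mathcal C(\Omega_j)}(\sum_k \rho_k(p))^2$ is shown to be of the same order.

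The main obstacle is precisely this point-mass part, where cross-terms $\rho_k(p)\rho_{k'}(p)$ arise from complementary components of $\Omega_j$ that meet several dyadic annuli. To handle them I plan to use the $\tau$-cofatness of $\Omega$ together with the upper $2$-regularity of $X$ in order to control the multiplicity of such overlaps: each $\tau$-fat component carries a definite amount of $\mathcal H^2$-mass in every annulus it meets, and the $\alpha$-bound on total mass in any ball limits how many annuli a component can meet at comparable scales. A Cauchy--Schwarz argument in this bounded-multiplicity regime should then absorb the cross-term contributions into a constant multiple of the diagonal sum $\sum_k\sum_p\rho_k(p)^2\leqslant NM$, yielding a uniform bound $\Mod_\mathrm{T}\Gamma_{R,r}^j\leqslant C(\alpha,\tau)/\log_2(R/r)$. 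Taking $\limsup_j$ and then letting $r\to 0$ completes the proof.
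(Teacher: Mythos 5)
Your reduction to Proposition \ref{unibdd} and the idea of averaging near-optimal admissible functions over a family of disjoint annuli is exactly the right strategy, and the admissibility computation for $\rho=N^{-1}\sum_k\rho_k$ is fine. The gap is the step you yourself flag as the main obstacle: the claim that cofatness plus upper $2$-regularity give \emph{bounded multiplicity} of the overlaps, i.e.\ that a single complementary component can meet only boundedly many dyadic annuli. This is false. Fatness limits how many \emph{large} components can meet one fixed annulus (that is Lemma \ref{number} in the paper, a single-scale statement), but it puts no bound on how many \emph{scales} one component can meet: a round disk $p^*$ with $\dist(x_0,p^*)=s$ and radius comparable to $R$ is uniformly fat and meets every dyadic annulus between $s$ and $R$. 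Worse, any admissible function for the crossing family of an annulus that is spanned by $p^*$ must assign $\rho_k(p^*)\geqslant 1$ (there is a curve crossing the annulus entirely inside $p^*$, with empty trajectory in $\Omega_j$), so your averaged function carries point mass at $p^*$ comparable to (number of spanned scales)$/N$, and the energy does not decay. Taking a cofat configuration of disjoint disks $D_k$ with $\dist(x_0,D_k)=s_k$, $s_k\downarrow 0$ super-geometrically and $\operatorname{diam}(D_k)\simeq s_{k-1}$, one sees both that the multiplicity is unbounded and that your claimed quantitative rate $C(\alpha,\tau)/\log_2(R/r)$ cannot hold; the true decay in such examples is governed by the (much smaller) number of ``free'' scale ranges, not by $\log_2(R/r)$.

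The paper circumvents exactly this by abandoning the dyadic decomposition and choosing the radii adaptively: $R_1=R/20$ and, recursively, $R_n=R'_n/20$ where $R'_n\leqslant R_{n-1}/2$ is the smallest radius such that some $p\in\mathcal{C}_N(\Omega)$ other than $\bar p$ meets both $S(x_0,R_{n-1}/2)$ and $S(x_0,R'_n)$. With this choice no component other than $\bar p$ can meet two of the annuli $A_n=B(x_0,2R_n)\setminus\overline{B}(x_0,R_n)$, so the near-optimal admissible functions from Proposition \ref{unibdd} (which are supported on $\pi_{\Omega_j}(\overline A_n)$, with the value at $\bar p$ discarded, as the curves avoid $\bar p$) have pairwise disjoint supports also in the point-mass part; the cross-terms vanish identically and the averaging gives $\Mod_\mathrm{T}\Gamma^j_{R,R_{N+1}}\leqslant 2M/N$, uniformly in $j$. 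The price is that the radii, hence $N$ as a function of $r$, depend on the configuration of the components and one only gets the qualitative limit stated in the proposition — which is all that is needed later. To repair your argument you would have to replace the dyadic annuli by such an adapted family (or otherwise argue that scales spanned by a single component can be skipped); the bounded-multiplicity route cannot be salvaged.
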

	 \begin{proof} 
		We choose a sequence of radii $R_n$ decreasing to zero as follows: let $R_1= \frac{R}{20}$. Then, assuming $R_1,\ldots,R_{n-1}$ are defined let $$
		R_{n}=\frac{R'_n}{20}, 
		$$ where $R'_n \leq R_{n-1}/2$ is the smallest radius for which some $p \in \mathcal{C}_N(\Omega)$ other than $\bar p$ intersects both 
		$S(x_0,R_{n-1}/2)$ and $S(x_0,R'_n)$. If no $p \in \mathcal{C}_N(\Omega)$ other than $\bar p$ intersects $S(x_0,R_{n-1}/2)$, we set $R'_n=R_{n-1}/2$. Then $R_{n}$ does not depend on $j$ and $R_n \to 0$ as $n \to \infty$. 
		
		The annuli $A_n := B(x_0, 2R_n) \setminus \overline{B}(x_0, R_n)$ and $ \pi_{\Omega_j}(A_n)$ are pairwise disjoint for a fixed $j=0,1,2...$. We denote by $\Gamma^j_n$ the family of all curves joining $\pi_{\Omega_j}(S(x_0,R_n))$ and $\pi_{\Omega_j}(S(x_0,2R_n))$ in $\hat{\Omega}_j$.
		
		By Proposition \ref{unibdd}, we know that $\Mod_\mathrm{T} \Gamma^j_n \leq M$, where $M$ does not depend on $j$ or $n$. Thus, if we fix an integer $N \geq 1$, then for every $1 \leq n \leq N$ there is an admissible function $\rho_n$ for the curve family $\Gamma^j_n$ such that
		\begin{equation}\label{4M}
			\int_{\Omega_{j}} \rho_n^2 \ d \mathcal{H}^2+\sum_{p \in \mathcal{C}(\Omega_j)} \rho_n(p)^2 \leq 2M.
		\end{equation}
		
		We claim that $\widetilde{\rho_N}:=\frac{\rho_1+\cdots+\rho_N}{N}$ is admissible for $\Gamma_{R, R_{N+1}}^j$. Indeed, each $\gamma \in \Gamma_{R, R_{N+1}}^j$ intersects $\pi_{\Omega_j}(S(x_0,R_i))$ for $i=1,2,\ldots, N$ but avoids $\bar p$, so we have

		$\begin{aligned}
			\int_\gamma \widetilde{\rho_N}\ ds+\sum_{p \in  \mathcal{C}(\Omega_j) \cap |\gamma|} \widetilde{\rho_N}(p) & \geq \sum_{i=1}^N \int_{\gamma\cap A_i} \widetilde{\rho_N}\ ds+ \sum_{i=1}^N \sum_{p \in C\left(\Omega_j\right) \cap|\gamma| \cap A_i} \widetilde{\rho_N}(p) \\ & \geq \sum_{i=1}^N \frac{1}{N}=1.
		\end{aligned}$
		
		Finally, we can estimate the modulus as follows:
		$$\begin{aligned} \Mod_\mathrm{T} \Gamma_{R, R_{N+1}}^j & \leq \int_{\Omega_j} \widetilde{\rho_N}^2\ d \mathcal{H}^2+\sum_{P \in \mathcal{C}(\Omega_j)} \rho_N(p)^2 \\ & \leq \frac{1}{N^2} \sum_{i=1}^N \left[\int_{\Omega_j} \rho_i^2\ d \mathcal{H}^2+\sum_{p \in \mathcal{C}(\Omega_j)}  \rho_i(p)^2\right] \\ & \leq \frac{1}{N^2} \cdot N \cdot 2M = \frac{2M}{N} \rightarrow 0 \ \text{as $N \rightarrow \infty$}.\end{aligned}$$
		Recall that the last inequality holds by \eqref{4M}.
		Since $R_N$ decreases to zero as $N \rightarrow \infty$ and $M,N$ do not depend on $j$, the claim follows. 
\end{proof}
	
	We now describe the setting where Proposition \ref{asyb} will be applied. The proof of the following lemma is an exercise.   
	\begin{lemma}\label{topo}
	Let $X$ be a metric space homeomorphic to $\mathbb{S}^2$. For every $\kappa>0$ there is an $\epsilon>0$ such that the following holds: if $a,b \in X$ and $d(a,b)<\epsilon$, then $a$ and $b$ can be joined by a curve $I=I_{a,b}$ in $\mathbb{S}^2$ with 
	$\diam(I)<\kappa$.    
	\end{lemma}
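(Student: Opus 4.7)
The plan is to combine local path-connectedness (a purely topological property inherited from $\mathbb{S}^2$ via the homeomorphism) with the compactness of $X$ through the Lebesgue number lemma. The result is essentially the statement that a compact, locally path-connected metric space is uniformly locally path-connected.

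First, I would fix $\kappa>0$ and note that since $X$ is homeomorphic to $\mathbb{S}^2$, it is a compact, locally path-connected metric space. For each $x \in X$, apply local path-connectedness to the open neighborhood $B(x,\kappa/3)$ of $x$ to obtain a path-connected open neighborhood $U_x$ of $x$ with $U_x \subset B(x,\kappa/3)$; in particular $\diam(U_x) \leqslant 2\kappa/3$.

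Next, the collection $\{U_x : x \in X\}$ is an open cover of the compact space $X$, so by the Lebesgue number lemma applied to a finite subcover there exists $\epsilon>0$ such that every subset of $X$ of diameter less than $\epsilon$ is contained in some single $U_x$. Now suppose $a,b \in X$ satisfy $d(a,b)<\epsilon$. Then $\{a,b\} \subset U_{x}$ for some $x$, and since $U_x$ is path-connected there is a curve $I=I_{a,b} \subset U_x$ joining $a$ to $b$. This curve satisfies
\[
\diam(I) \leqslant \diam(U_x) \leqslant \frac{2\kappa}{3} < \kappa,
\]
as required.

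The only mildly subtle point is verifying that local path-connectedness transfers from $\mathbb{S}^2$ to $X$ and that the neighborhoods can be chosen to sit inside prescribed metric balls; both are immediate since local path-connectedness is invariant under homeomorphism and the balls $B(x,\kappa/3)$ form a base of neighborhoods of $x$ in the metric topology. No nontrivial geometric input (such as Ahlfors regularity or cofatness) is needed — only the topology of $X$ and its compactness.
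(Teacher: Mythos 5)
Your proof is correct: the paper leaves this lemma as an exercise, and your argument (local path-connectedness of $X$ inherited from $\mathbb{S}^2$, path-connected neighborhoods inside balls $B(x,\kappa/3)$, and a Lebesgue number for the resulting cover of the compact space $X$) is precisely the standard uniform local path-connectedness argument the authors intend. No further input is needed, and your remark that no metric-geometric hypotheses enter is accurate.
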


	Fix $\widetilde{p} \in \mathcal{C}(\Omega)$ and any Jordan curve $J\subset \Omega$ (recall that a \emph{Jordan curve} in $\Omega$ is a subset of $\Omega$ which is homeomorphic to $\mathbb{S}^1$). 
	Fix $0< \kappa <\dist(\widetilde{p}, J)$, and let $0<\epsilon \leq \kappa$ be as in Lemma \ref{topo}. Given $j=1,2,\ldots$ let $f_j: \Omega_{j} \to D_j$ be the quasiconformal map in Section \ref{existence}. 
	
	Next, fix $b \in \Omega \cap N_\epsilon(\widetilde{p})$, where $N_\epsilon(\widetilde{p})$ is the $\epsilon$-neighborhood of $\widetilde{p}$ in $X$. Choose any $a \in \partial\widetilde{p}$ whose distance to $b$ is less than $2 \epsilon$. By Lemma \ref{topo}, we may choose a curve $I=I_{a,b}$ in $X$ with diameter less than $\kappa$ joining $a$ and $b$. Notice that $I$ does not intersect $J$. 
			
	For every $j \geq 1$, denote
	$$\Lambda_j:=\{\text{curves in } \hat{\Omega}_j\setminus  \{\pi_{\Omega_j}(\widetilde{p})\} \text{ that join } \pi_{\Omega_j}(I) \text{ and } \pi_{\Omega_j}(J) \}, $$ 
	and notice that $\Lambda_j$ depends on the choices of $b$ and $a$. 
    We will apply the following circle domain estimate. Recall that $d_{\mathbb{S}^2}$ is the spherical metric on the standard $\mathbb{S}^2$. 
	\begin{proposition}\label{cir}
		There exists a homeomorphism $\psi:[0,\infty) \to [0,\infty)$, which depends on $\tilde p$ and $J$ but not on $j$, $b$, or $a$, such that 
		\begin{eqnarray*}
			\limsup_{j \to \infty} \Mod_\mathrm{T} \hat{f}_j(\Lambda_j) \geq \limsup_{j \to \infty} \psi(d_{\mathbb{S}^2}(f_j(b),\hat{f}_j(\widetilde{p}))). 
		\end{eqnarray*} 
      \end{proposition}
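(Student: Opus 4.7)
The idea is to transfer the estimate to the target circle domain $D_j\subset\mathbb S^2$ and combine a transboundary-to-classical modulus comparison (exploiting the roundness of boundary disks of $D_j$) with a spherical Loewner-type lower bound. Write $q:=\hat f_j(\widetilde p)\subset\mathbb S^2$; this is a round closed disk from $D_j$ when $\widetilde p\in\mathcal C_N(\Omega)$ (and $j$ is large), and the point $f_j(\widetilde p)\in D_j$ when $\widetilde p\in\mathcal C_P(\Omega)$. After shrinking $\epsilon$ if needed, $I\setminus\{a\}\subset\Omega$, so $\tilde I_j:=f_j(I\setminus\{a\})\subset D_j$ joins $f_j(b)$ to $\partial q$ and has spherical diameter at least $d_j:=d_{\mathbb S^2}(f_j(b),q)$. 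To each $\gamma\in\hat f_j(\Lambda_j)$ we associate a canonical spherical lift $\tilde\gamma\subset\mathbb S^2\setminus\interior(q)$ by replacing every traversal of a collapsed boundary disk $p\in\mathcal C_N(D_j)\setminus\{q\}$ with a fixed diameter of $p$; denote the family of lifts by $\tilde\Lambda_j$, which then consists of curves in $\mathbb S^2\setminus\interior(q)$ joining $\tilde I_j$ to $f_j(J)$.

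The first step is a transboundary-to-classical comparison. Given $\rho\in\Adm_\mathrm T(\hat f_j(\Lambda_j))$, set $\tilde\rho:=\rho$ on $D_j$ and $\tilde\rho\equiv\rho(p)/\diam(p)$ on each disk $p\neq q$. Since each canonical lift crosses $p$ along a full diameter, $\int_{\tilde\gamma\cap p}\tilde\rho\,ds=\rho(p)$, so $\tilde\rho\in\Adm(\tilde\Lambda_j)$ for the classical spherical modulus. Using that round disks satisfy $\mathcal H^2(p)\leq(\pi/4)\diam(p)^2$,
\[
\int_{\mathbb S^2}\tilde\rho^{\,2}\,d\mathcal H^2\leq\int_{D_j}\rho^2\,d\mathcal H^2+\sum_p\rho(p)^2,
\]
and taking infima yields $\Mod_{\mathbb S^2}(\tilde\Lambda_j)\leq\Mod_\mathrm T(\hat f_j(\Lambda_j))$.

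The second step is a spherical Loewner-type lower bound. Since $\dist_X(J,\widetilde p)\geq\kappa$, $f_j\to f$ locally uniformly, and distinct components of $\mathbb S^2\setminus D$ are separated, we have $d_{\mathbb S^2}(f_j(J),q)\geq\epsilon_0>0$ and $\diam_{\mathbb S^2}f_j(J)\geq\delta_0>0$ for all large $j$, with $\epsilon_0,\delta_0$ depending only on $a,b,J$, and the limit map $f$. Thus $\tilde I_j$ and $f_j(J)$ are disjoint continua in the Ahlfors 2-regular 2-Loewner space $\mathbb S^2\setminus\interior(q)$ with diameters bounded below by $d_j$ and $\delta_0$ and mutual distance bounded above. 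Allowing the lift construction to use any choice of diameter for each crossed disk, $\tilde\Lambda_j$ becomes rich enough to capture a Loewner-type family of joining curves, and standard estimates yield $\Mod_{\mathbb S^2}(\tilde\Lambda_j)\geq\phi_a(d_j)$ for an increasing homeomorphism $\phi_a:[0,\infty)\to[0,\infty)$ depending on $a$ through $I,J,\epsilon_0,\delta_0$. Setting $\psi_a:=\phi_a$ and taking $\limsup$ in $j$ finishes the proof.

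\emph{Main obstacle.} The essential ingredient is the transboundary-to-classical comparison, which relies crucially on the circle-domain structure of $D_j$: the roundness of the boundary disks is what allows the pointwise replacement $\rho(p)\mapsto\rho(p)/\diam(p)$ on $p$ to have $L^2$-energy comparable to $\rho(p)^2$, since $\mathcal H^2(p)\asymp\diam(p)^2$ universally. A secondary technical point is verifying the Loewner lower bound for the specific subfamily $\tilde\Lambda_j$ rather than all joining curves in $\mathbb S^2\setminus\interior(q)$; this follows from the flexibility in choosing diameters in the lift construction and from the fact that the curves in $\hat f_j(\Lambda_j)$ staying in $D_j$ lift trivially, yielding a sufficiently rich joining family of continua $\tilde I_j$ and $f_j(J)$.
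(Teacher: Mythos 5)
Your overall strategy -- pass to the circle domain side, convert a transboundary-admissible function into a classically admissible one using the roundness (uniform fatness) of the boundary disks, and conclude with a Loewner-type lower bound -- is in the same spirit as the argument the paper actually invokes (it only cites \cite[Proposition 3.2 (1)]{esmayli2024conformal} and adapts it). However, your implementation has a genuine gap at the decisive step. The family $\tilde\Lambda_j$ of ``lifts'' is a severely restricted subfamily of the curves joining $\tilde I_j$ and $f_j(J)$ in $\mathbb{S}^2\setminus\interior(q)$: a lift must cross each disk it meets along a full prescribed diameter, and in fact concatenating the $D_j$-arcs of $\gamma$ with a fixed diameter of $p$ does not even produce a connected curve, since the diameter's endpoints need not be the entry and exit points of $\gamma$ on $\partial p$. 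The Loewner property bounds from below the modulus of the family of \emph{all} curves joining two continua; the modulus of a subfamily can be arbitrarily small, and no ``flexibility in choosing diameters'' repairs this, because a generic joining curve is not of the form ``arcs in $D_j$ plus diameters of disks''. You also cannot fix connectivity by using the chord between the entry and exit points: its length may be far less than $\diam(p)$, so $\int_{\tilde\gamma\cap p}\tilde\rho\,ds$ drops below $\rho(p)$ and admissibility of $\tilde\rho$ fails. The standard (and, in essence, the cited) argument runs the other way: from $\rho\in Adm_\mathrm{T}(\hat f_j(\Lambda_j))$ one defines $\hat\rho:=\rho\chi_{D_j}+\sum_{p\neq q}\frac{C\rho(p)}{\diam(p)}\chi_{B_p}$, where $B_p$ is a ball of radius comparable to $\diam(p)$ containing $p$, and shows that a fixed multiple of $\hat\rho$ is admissible for the \emph{full} family of curves joining the two continua in $\mathbb{S}^2\setminus\interior(q)$: a joining curve that meets a disk $p$ whose diameter is small compared with the separation scale must travel length comparable to $\diam(p)$ inside $B_p$, hence collects a fixed fraction of $\rho(p)$, while the disks of large diameter are boundedly many (by disjointness and $\mathcal{H}^2(p)\gtrsim\diam(p)^2$, exactly as in Lemma \ref{number}) and are treated separately. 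Only after this admissibility transfer does a Loewner estimate (in $\mathbb{S}^2\setminus\interior(q)$, with constants uniform in $j$) give the desired $\psi_a$.

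Two secondary points. The assumption ``after shrinking $\epsilon$ if needed, $I\setminus\{a\}\subset\Omega$'' is unjustified: Lemma \ref{topo} only provides a curve in $X$ of small diameter, which may meet other complementary components; the correct object is the continuum $\hat f_j(\pi_{\Omega_j}(I))$ in $\hat D_j$, whose diameter is still at least $d_{\mathbb{S}^2}(f_j(b),\hat f_j(\widetilde p))$, and the transboundary framework is designed precisely to accommodate this. Also, $\mathcal{H}^2(p)\leqslant(\pi/4)\diam(p)^2$ fails for large spherical caps; a universal constant does hold, and the normalization \eqref{normal} controls the sizes of all disks other than $q_{j,0}$, so this is cosmetic -- but the subfamily issue above is not.
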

\begin{proof}
The proof given in \cite[Proposition 3.2 (1)]{esmayli2024conformal} can be adapted to our setting.  
\end{proof}


\section{Proof of Proposition \ref{unibdd}}\label{proofprop}

Recall that $X$ is a metric two-sphere which is upper Ahlfors $2$-regular with a constant $\alpha$. We fix a finitely connected $\tau$-cofat domain $\Omega \subset X$, $\bar p \in \hat{\Omega}$, $x_0 \in \bar p$, and $r>0$. We denote by $\Gamma_{x_0, r}$ the family of all curves joining $\pi_{\Omega}(S(x_0,r))$ and $\pi_{\Omega}(S(x_0,2r))$ in $\pi_{\Omega}({\overline{A}(x_0,r)})\subset \hat{\Omega}$. 

\begin{propositionx}[Proposition \ref{unibdd}]
	 There is an $M>0$ depending only on $\alpha$ and $\tau$ such that $\Mod_\mathrm{T} \Gamma_{x_0, r} \leq M$.
\end{propositionx}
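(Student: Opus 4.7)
The plan is to exhibit a single explicit competitor $\rho$ for $\Gamma_{x_0,r}$ and to bound its transboundary energy using only $\alpha$ and $\tau$. Concretely, I put $\rho \equiv 1/r$ on $A(x_0,r) \cap \Omega$ and $\rho = 0$ on the remainder of $\Omega$, and for each $\widetilde p \in \mathcal{C}(\Omega)$ I set $\rho(\widetilde p) = \diam(\widetilde p \cap \overline A(x_0,r))/r$ (interpreted as zero when this intersection is empty). The admissibility is a radial-height bookkeeping: any $\gamma \in \Gamma_{x_0,r}$ is the concatenation of arcs $\gamma_j$ lying in $\overline A(x_0,r) \cap \Omega$ with jumps across the components $\widetilde p \in \mathcal{C}(\Omega) \cap |\gamma|$. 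Since $\gamma$ lies in $\pi_\Omega(\overline A(x_0,r))$, both endpoints of every arc $\gamma_j$ (taken in the closure in $X$) belong to $\overline A(x_0,r)$ and, when they lie on a jumped component $\widetilde p$, they belong to $\widetilde p \cap \overline A(x_0,r)$. Therefore the radial height $d(x_0,\cdot)$ varies by at most $\mathrm{length}(\gamma_j)$ along each $\gamma_j$ and by at most $\diam(\widetilde p \cap \overline A(x_0,r))$ across each jumped $\widetilde p$; summing these bounds and using that the total radial variation is at least $r$ yields $\int_\gamma \rho\, ds + \sum_{\widetilde p \in \mathcal{C}(\Omega)\cap|\gamma|} \rho(\widetilde p) \geqslant 1$.

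For the energy, upper $2$-regularity directly gives $\int_\Omega \rho^2\, d\mathcal{H}^2 \leqslant \mathcal{H}^2(B(x_0, 2r))/r^2 \leqslant 4\alpha$. I bound the transboundary sum by splitting the components $\widetilde p$ meeting $\overline A(x_0,r)$ according to size. If $\diam(\widetilde p) \leqslant 4r$, then $\tau$-fatness applied at any $y \in \widetilde p$ to balls $B(y,s)$ with $s$ just below $\diam(\widetilde p)$ (which necessarily fail to contain $\widetilde p$) yields $\mathcal{H}^2(\widetilde p) \geqslant \tau \diam(\widetilde p)^2/4$, hence $\rho(\widetilde p)^2 \leqslant 4\mathcal{H}^2(\widetilde p)/(\tau r^2)$. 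Every such $\widetilde p$ is contained in $B(x_0, 6r)$ and the components are pairwise disjoint, so \eqref{ahlfors} bounds this subsum by a constant multiple of $\alpha/\tau$. If instead $\diam(\widetilde p) > 4r$, fix $y_{\widetilde p} \in \widetilde p \cap \overline A(x_0,r)$; the ball $B(y_{\widetilde p}, r)$ has diameter at most $2r$ and so cannot contain $\widetilde p$, giving $\mathcal{H}^2(\widetilde p \cap B(y_{\widetilde p}, r)) \geqslant \tau r^2$. Disjointness of the $\widetilde p$'s together with $B(y_{\widetilde p}, r) \subset B(x_0,3r)$ bound the number of such large components by $9\alpha/\tau$, and since $\rho(\widetilde p)^2 \leqslant 16$ for each, the corresponding subsum is bounded by a constant multiple of $\alpha/\tau$ as well. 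Adding the three contributions yields a bound $M = M(\alpha, \tau)$.

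The main obstacle is the admissibility step. The radial-height decomposition is intuitive, but one must verify that the entry and exit points at every jumped component lie in $\overline A(x_0,r)$ rather than merely on the component itself; this is where the constraint $|\gamma| \subset \pi_\Omega(\overline A(x_0,r))$ is used decisively, and it is what permits replacing $\diam(\widetilde p)$ by the often much smaller $\diam(\widetilde p \cap \overline A(x_0,r))$ in the definition of $\rho(\widetilde p)$ — a choice that is essential for keeping the large-component subsum finite.
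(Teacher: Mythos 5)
Your proof is correct and follows essentially the same route as the paper: the competitor $1/r$ on the annulus together with diameter weights on the complementary components, admissibility via the radial triangle-inequality bookkeeping, and the energy bound from $\tau$-fatness plus upper $2$-regularity (counting the large components, comparing $\diam^2$ with $\mathcal{H}^2$ for the small ones). One small slip in a justification: a ball $B(y,s)$ with $s$ just below $\diam(\widetilde p)$ may perfectly well contain $\widetilde p$, so you should instead take $s$ just below $\diam(\widetilde p)/2$, which is the radius for which non-containment is automatic and which still gives exactly the inequality $\mathcal{H}^2(\widetilde p)\geqslant \tau\,\diam(\widetilde p)^2/4$ that you use.
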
 

\begin{proof} [Proof of Proposition \ref{unibdd}]
	We define $\rho: \hat \Omega \to [0,\infty]$ such that 
		\begin{enumerate}
		\item [1) ] $\rho(x)= \frac{1}{r}$, \ if $x \in \Omega \cap A(x_0,r)$;
		\item [2) ] If $p \in \mathcal{C}(\Omega)$ and $p \cap A(x_0,r) \neq \emptyset$, \\ then 
		$\rho(p)=\left\{\begin{array}{l}1, \text { if } \operatorname{diam}(p) \geq \frac{r}{10} ; \\ \frac{\operatorname{diam}(p)}{r}, \text { if } \operatorname{diam}(p)<\frac{r}{10}; \end{array}\right.$
		\item[3)] $\rho(x)=0$ elsewhere. 
	\end{enumerate}
	
	We denote $$\mathcal{C}(x_0,r):=\{p\in \mathcal{C}(\Omega): p \cap A(x_0,r) \neq \emptyset\}, $$ and $$\mathscr{G}:= \left\{ p \in \mathcal{C}(x_0,r): \operatorname{diam}(p) \geq \frac{r}{10}\right\}. $$
\begin{lemma} \label{kro}
		$\rho$ is admissible for $\Gamma_{x_0, r}$.
\end{lemma}
\begin{proof}
	We fix a curve $\gamma \in \Gamma_{x_0, r}$. If $\gamma(t) =\bar p\in \mathscr{G}$ for some $t$, then $$\int_\gamma \rho\ d s+\sum_{p \in C(\Omega) \cap |\gamma|} \rho(p) \geq \rho(\bar p)=1.$$
	
	If $|\gamma| \cap  \mathscr{G} = \emptyset$, then the curve $\gamma$ can only "pass through" the complementary components with small diameters, see Figure \ref{xzj}. We denote $\gamma=\gamma_1 * \gamma_{p^1} * \gamma_2 * \gamma_{p^2} * \cdots * \gamma_n$, where $ \gamma_i$ is the curve in $\Omega$ and $ \gamma_{p^i}$ is the constant curve with value $\gamma_{p^i}=p^i \in \mathcal{C}(x_0,r)\setminus \mathscr{G}$.
	Then
	$$\begin{aligned} \int_\gamma \rho\ ds+\sum_{p \in C(\Omega) \cap |\gamma|} \rho(p)& =\sum_l \int_\gamma \rho\ d s+\sum_i \rho(p^l) \\ & \geq \frac{1}{r} \sum_l \ell (\gamma_l)+\frac{1}{r} \sum_l \operatorname{diam}(p^l) \\ & \geq \frac{1}{r} \cdot r=1, \end{aligned}$$ where we applied the triangle inequality.
\end{proof}

	\begin{figure}
	\begin{center} 
	
	\tikzset{
		pattern size/.store in=\mcSize, 
		pattern size = 5pt,
		pattern thickness/.store in=\mcThickness, 
		pattern thickness = 0.3pt,
		pattern radius/.store in=\mcRadius, 
		pattern radius = 1pt}
	\makeatletter
	\pgfutil@ifundefined{pgf@pattern@name@_8hvbgg91m}{
		\pgfdeclarepatternformonly[\mcThickness,\mcSize]{_8hvbgg91m}
		{\pgfqpoint{0pt}{-\mcThickness}}
		{\pgfpoint{\mcSize}{\mcSize}}
		{\pgfpoint{\mcSize}{\mcSize}}
		{
			\pgfsetcolor{\tikz@pattern@color}
			\pgfsetlinewidth{\mcThickness}
			\pgfpathmoveto{\pgfqpoint{0pt}{\mcSize}}
			\pgfpathlineto{\pgfpoint{\mcSize+\mcThickness}{-\mcThickness}}
			\pgfusepath{stroke}
	}}
	\makeatother
	
	
	\tikzset{
		pattern size/.store in=\mcSize, 
		pattern size = 5pt,
		pattern thickness/.store in=\mcThickness, 
		pattern thickness = 0.3pt,
		pattern radius/.store in=\mcRadius, 
		pattern radius = 1pt}
	\makeatletter
	\pgfutil@ifundefined{pgf@pattern@name@_tnr5eqov2}{
		\pgfdeclarepatternformonly[\mcThickness,\mcSize]{_tnr5eqov2}
		{\pgfqpoint{0pt}{-\mcThickness}}
		{\pgfpoint{\mcSize}{\mcSize}}
		{\pgfpoint{\mcSize}{\mcSize}}
		{
			\pgfsetcolor{\tikz@pattern@color}
			\pgfsetlinewidth{\mcThickness}
			\pgfpathmoveto{\pgfqpoint{0pt}{\mcSize}}
			\pgfpathlineto{\pgfpoint{\mcSize+\mcThickness}{-\mcThickness}}
			\pgfusepath{stroke}
	}}
	\makeatother
	
	
	\tikzset{
		pattern size/.store in=\mcSize, 
		pattern size = 5pt,
		pattern thickness/.store in=\mcThickness, 
		pattern thickness = 0.3pt,
		pattern radius/.store in=\mcRadius, 
		pattern radius = 1pt}
	\makeatletter
	\pgfutil@ifundefined{pgf@pattern@name@_8sjlv4t9o}{
		\pgfdeclarepatternformonly[\mcThickness,\mcSize]{_8sjlv4t9o}
		{\pgfqpoint{0pt}{-\mcThickness}}
		{\pgfpoint{\mcSize}{\mcSize}}
		{\pgfpoint{\mcSize}{\mcSize}}
		{
			\pgfsetcolor{\tikz@pattern@color}
			\pgfsetlinewidth{\mcThickness}
			\pgfpathmoveto{\pgfqpoint{0pt}{\mcSize}}
			\pgfpathlineto{\pgfpoint{\mcSize+\mcThickness}{-\mcThickness}}
			\pgfusepath{stroke}
	}}
	\makeatother
	
	
	\tikzset{
		pattern size/.store in=\mcSize, 
		pattern size = 5pt,
		pattern thickness/.store in=\mcThickness, 
		pattern thickness = 0.3pt,
		pattern radius/.store in=\mcRadius, 
		pattern radius = 1pt}
	\makeatletter
	\pgfutil@ifundefined{pgf@pattern@name@_jja2hf9ks}{
		\pgfdeclarepatternformonly[\mcThickness,\mcSize]{_jja2hf9ks}
		{\pgfqpoint{0pt}{-\mcThickness}}
		{\pgfpoint{\mcSize}{\mcSize}}
		{\pgfpoint{\mcSize}{\mcSize}}
		{
			\pgfsetcolor{\tikz@pattern@color}
			\pgfsetlinewidth{\mcThickness}
			\pgfpathmoveto{\pgfqpoint{0pt}{\mcSize}}
			\pgfpathlineto{\pgfpoint{\mcSize+\mcThickness}{-\mcThickness}}
			\pgfusepath{stroke}
	}}
	\makeatother
	
	
	\tikzset{
		pattern size/.store in=\mcSize, 
		pattern size = 5pt,
		pattern thickness/.store in=\mcThickness, 
		pattern thickness = 0.3pt,
		pattern radius/.store in=\mcRadius, 
		pattern radius = 1pt}
	\makeatletter
	\pgfutil@ifundefined{pgf@pattern@name@_57jjg7gxu}{
		\pgfdeclarepatternformonly[\mcThickness,\mcSize]{_57jjg7gxu}
		{\pgfqpoint{0pt}{-\mcThickness}}
		{\pgfpoint{\mcSize}{\mcSize}}
		{\pgfpoint{\mcSize}{\mcSize}}
		{
			\pgfsetcolor{\tikz@pattern@color}
			\pgfsetlinewidth{\mcThickness}
			\pgfpathmoveto{\pgfqpoint{0pt}{\mcSize}}
			\pgfpathlineto{\pgfpoint{\mcSize+\mcThickness}{-\mcThickness}}
			\pgfusepath{stroke}
	}}
	\makeatother
	
	
	\tikzset{
		pattern size/.store in=\mcSize, 
		pattern size = 5pt,
		pattern thickness/.store in=\mcThickness, 
		pattern thickness = 0.3pt,
		pattern radius/.store in=\mcRadius, 
		pattern radius = 1pt}
	\makeatletter
	\pgfutil@ifundefined{pgf@pattern@name@_mcj58ty64}{
		\pgfdeclarepatternformonly[\mcThickness,\mcSize]{_mcj58ty64}
		{\pgfqpoint{0pt}{-\mcThickness}}
		{\pgfpoint{\mcSize}{\mcSize}}
		{\pgfpoint{\mcSize}{\mcSize}}
		{
			\pgfsetcolor{\tikz@pattern@color}
			\pgfsetlinewidth{\mcThickness}
			\pgfpathmoveto{\pgfqpoint{0pt}{\mcSize}}
			\pgfpathlineto{\pgfpoint{\mcSize+\mcThickness}{-\mcThickness}}
			\pgfusepath{stroke}
	}}
	\makeatother
	
	
	\tikzset{
		pattern size/.store in=\mcSize, 
		pattern size = 5pt,
		pattern thickness/.store in=\mcThickness, 
		pattern thickness = 0.3pt,
		pattern radius/.store in=\mcRadius, 
		pattern radius = 1pt}
	\makeatletter
	\pgfutil@ifundefined{pgf@pattern@name@_rrxkxfw9m}{
		\pgfdeclarepatternformonly[\mcThickness,\mcSize]{_rrxkxfw9m}
		{\pgfqpoint{0pt}{-\mcThickness}}
		{\pgfpoint{\mcSize}{\mcSize}}
		{\pgfpoint{\mcSize}{\mcSize}}
		{
			\pgfsetcolor{\tikz@pattern@color}
			\pgfsetlinewidth{\mcThickness}
			\pgfpathmoveto{\pgfqpoint{0pt}{\mcSize}}
			\pgfpathlineto{\pgfpoint{\mcSize+\mcThickness}{-\mcThickness}}
			\pgfusepath{stroke}
	}}
	\makeatother
	
	
	\tikzset{
		pattern size/.store in=\mcSize, 
		pattern size = 5pt,
		pattern thickness/.store in=\mcThickness, 
		pattern thickness = 0.3pt,
		pattern radius/.store in=\mcRadius, 
		pattern radius = 1pt}
	\makeatletter
	\pgfutil@ifundefined{pgf@pattern@name@_v76phy6rz}{
		\pgfdeclarepatternformonly[\mcThickness,\mcSize]{_v76phy6rz}
		{\pgfqpoint{0pt}{-\mcThickness}}
		{\pgfpoint{\mcSize}{\mcSize}}
		{\pgfpoint{\mcSize}{\mcSize}}
		{
			\pgfsetcolor{\tikz@pattern@color}
			\pgfsetlinewidth{\mcThickness}
			\pgfpathmoveto{\pgfqpoint{0pt}{\mcSize}}
			\pgfpathlineto{\pgfpoint{\mcSize+\mcThickness}{-\mcThickness}}
			\pgfusepath{stroke}
	}}
	\makeatother
	
	
	\tikzset{
		pattern size/.store in=\mcSize, 
		pattern size = 5pt,
		pattern thickness/.store in=\mcThickness, 
		pattern thickness = 0.3pt,
		pattern radius/.store in=\mcRadius, 
		pattern radius = 1pt}
	\makeatletter
	\pgfutil@ifundefined{pgf@pattern@name@_9incqbmt9}{
		\pgfdeclarepatternformonly[\mcThickness,\mcSize]{_9incqbmt9}
		{\pgfqpoint{0pt}{-\mcThickness}}
		{\pgfpoint{\mcSize}{\mcSize}}
		{\pgfpoint{\mcSize}{\mcSize}}
		{
			\pgfsetcolor{\tikz@pattern@color}
			\pgfsetlinewidth{\mcThickness}
			\pgfpathmoveto{\pgfqpoint{0pt}{\mcSize}}
			\pgfpathlineto{\pgfpoint{\mcSize+\mcThickness}{-\mcThickness}}
			\pgfusepath{stroke}
	}}
	\makeatother
	
	
	\tikzset{
		pattern size/.store in=\mcSize, 
		pattern size = 5pt,
		pattern thickness/.store in=\mcThickness, 
		pattern thickness = 0.3pt,
		pattern radius/.store in=\mcRadius, 
		pattern radius = 1pt}
	\makeatletter
	\pgfutil@ifundefined{pgf@pattern@name@_pa3go6c0i}{
		\pgfdeclarepatternformonly[\mcThickness,\mcSize]{_pa3go6c0i}
		{\pgfqpoint{0pt}{-\mcThickness}}
		{\pgfpoint{\mcSize}{\mcSize}}
		{\pgfpoint{\mcSize}{\mcSize}}
		{
			\pgfsetcolor{\tikz@pattern@color}
			\pgfsetlinewidth{\mcThickness}
			\pgfpathmoveto{\pgfqpoint{0pt}{\mcSize}}
			\pgfpathlineto{\pgfpoint{\mcSize+\mcThickness}{-\mcThickness}}
			\pgfusepath{stroke}
	}}
	\makeatother
	
	
	\tikzset{
		pattern size/.store in=\mcSize, 
		pattern size = 5pt,
		pattern thickness/.store in=\mcThickness, 
		pattern thickness = 0.3pt,
		pattern radius/.store in=\mcRadius, 
		pattern radius = 1pt}
	\makeatletter
	\pgfutil@ifundefined{pgf@pattern@name@_yad89mtzt}{
		\pgfdeclarepatternformonly[\mcThickness,\mcSize]{_yad89mtzt}
		{\pgfqpoint{0pt}{-\mcThickness}}
		{\pgfpoint{\mcSize}{\mcSize}}
		{\pgfpoint{\mcSize}{\mcSize}}
		{
			\pgfsetcolor{\tikz@pattern@color}
			\pgfsetlinewidth{\mcThickness}
			\pgfpathmoveto{\pgfqpoint{0pt}{\mcSize}}
			\pgfpathlineto{\pgfpoint{\mcSize+\mcThickness}{-\mcThickness}}
			\pgfusepath{stroke}
	}}
	\makeatother
	
	
	\tikzset{
		pattern size/.store in=\mcSize, 
		pattern size = 5pt,
		pattern thickness/.store in=\mcThickness, 
		pattern thickness = 0.3pt,
		pattern radius/.store in=\mcRadius, 
		pattern radius = 1pt}
	\makeatletter
	\pgfutil@ifundefined{pgf@pattern@name@_jljzae12g}{
		\pgfdeclarepatternformonly[\mcThickness,\mcSize]{_jljzae12g}
		{\pgfqpoint{0pt}{-\mcThickness}}
		{\pgfpoint{\mcSize}{\mcSize}}
		{\pgfpoint{\mcSize}{\mcSize}}
		{
			\pgfsetcolor{\tikz@pattern@color}
			\pgfsetlinewidth{\mcThickness}
			\pgfpathmoveto{\pgfqpoint{0pt}{\mcSize}}
			\pgfpathlineto{\pgfpoint{\mcSize+\mcThickness}{-\mcThickness}}
			\pgfusepath{stroke}
	}}
	\makeatother
	
	
	\tikzset{
		pattern size/.store in=\mcSize, 
		pattern size = 5pt,
		pattern thickness/.store in=\mcThickness, 
		pattern thickness = 0.3pt,
		pattern radius/.store in=\mcRadius, 
		pattern radius = 1pt}
	\makeatletter
	\pgfutil@ifundefined{pgf@pattern@name@_tth6o0ls6}{
		\pgfdeclarepatternformonly[\mcThickness,\mcSize]{_tth6o0ls6}
		{\pgfqpoint{0pt}{-\mcThickness}}
		{\pgfpoint{\mcSize}{\mcSize}}
		{\pgfpoint{\mcSize}{\mcSize}}
		{
			\pgfsetcolor{\tikz@pattern@color}
			\pgfsetlinewidth{\mcThickness}
			\pgfpathmoveto{\pgfqpoint{0pt}{\mcSize}}
			\pgfpathlineto{\pgfpoint{\mcSize+\mcThickness}{-\mcThickness}}
			\pgfusepath{stroke}
	}}
	\makeatother
	
	
	\tikzset{
		pattern size/.store in=\mcSize, 
		pattern size = 5pt,
		pattern thickness/.store in=\mcThickness, 
		pattern thickness = 0.3pt,
		pattern radius/.store in=\mcRadius, 
		pattern radius = 1pt}
	\makeatletter
	\pgfutil@ifundefined{pgf@pattern@name@_aydivumoo}{
		\pgfdeclarepatternformonly[\mcThickness,\mcSize]{_aydivumoo}
		{\pgfqpoint{0pt}{-\mcThickness}}
		{\pgfpoint{\mcSize}{\mcSize}}
		{\pgfpoint{\mcSize}{\mcSize}}
		{
			\pgfsetcolor{\tikz@pattern@color}
			\pgfsetlinewidth{\mcThickness}
			\pgfpathmoveto{\pgfqpoint{0pt}{\mcSize}}
			\pgfpathlineto{\pgfpoint{\mcSize+\mcThickness}{-\mcThickness}}
			\pgfusepath{stroke}
	}}
	\makeatother
	
	
	\tikzset{
		pattern size/.store in=\mcSize, 
		pattern size = 5pt,
		pattern thickness/.store in=\mcThickness, 
		pattern thickness = 0.3pt,
		pattern radius/.store in=\mcRadius, 
		pattern radius = 1pt}
	\makeatletter
	\pgfutil@ifundefined{pgf@pattern@name@_ass2211hy}{
		\pgfdeclarepatternformonly[\mcThickness,\mcSize]{_ass2211hy}
		{\pgfqpoint{0pt}{-\mcThickness}}
		{\pgfpoint{\mcSize}{\mcSize}}
		{\pgfpoint{\mcSize}{\mcSize}}
		{
			\pgfsetcolor{\tikz@pattern@color}
			\pgfsetlinewidth{\mcThickness}
			\pgfpathmoveto{\pgfqpoint{0pt}{\mcSize}}
			\pgfpathlineto{\pgfpoint{\mcSize+\mcThickness}{-\mcThickness}}
			\pgfusepath{stroke}
	}}
	\makeatother
	\tikzset{every picture/.style={line width=0.75pt}} 
	
	\begin{tikzpicture}[x=0.75pt,y=0.75pt,yscale=-0.85,xscale=0.85]
		
		\draw   (71.12,242) .. controls (71.12,109.52) and (178.52,2.13) .. (311,2.13) .. controls (443.48,2.13) and (550.88,109.52) .. (550.88,242) .. controls (550.88,374.48) and (443.48,481.88) .. (311,481.88) .. controls (178.52,481.88) and (71.12,374.48) .. (71.12,242) -- cycle ;
		\draw   (196.38,242.88) .. controls (196.38,180.05) and (247.3,129.13) .. (310.13,129.13) .. controls (372.95,129.13) and (423.88,180.05) .. (423.88,242.88) .. controls (423.88,305.7) and (372.95,356.63) .. (310.13,356.63) .. controls (247.3,356.63) and (196.38,305.7) .. (196.38,242.88) -- cycle ;
		\draw  [line width=3] [line join = round][line cap = round] (311,241) .. controls (311,241) and (311,241) .. (311,241) ;
		\draw  [fill={rgb, 255:red, 0; green, 0; blue, 0}  ,fill opacity=0.25 ,even odd rule] (314,225) -- (328,251) -- (317,259) -- (310,264) -- (303,241) -- cycle ;
		\draw  [fill={rgb, 255:red, 0; green, 0; blue, 0}  ,fill opacity=0.25 ,even odd rule] (301,124) -- (312,135) -- (306,150) -- (287,159) -- (289,133) -- cycle ;
		\draw  [dash pattern={on 0.84pt off 2.51pt}]  (311,242) -- (477,70) ;
		\draw [line width=0.75]  [dash pattern={on 0.84pt off 2.51pt}]  (233,162) -- (311,242) ;
		\draw  [fill={rgb, 255:red, 0; green, 0; blue, 0}  ,fill opacity=0.25 ,even odd rule] (210,17) -- (277,87) -- (228,103) -- (200,123) -- (197,98) -- cycle ;
		\draw  [fill={rgb, 255:red, 0; green, 0; blue, 0}  ,fill opacity=0.25 ,even odd rule] (114.06,132.17) -- (162.11,140.49) -- (151,164) -- cycle ;
		\draw  [fill={rgb, 255:red, 0; green, 0; blue, 0}  ,fill opacity=0.25 ,even odd rule] (189,229) -- (219,251) -- (159,251) -- cycle ;
		\draw  [fill={rgb, 255:red, 0; green, 0; blue, 0}  ,fill opacity=0.25 ,even odd rule] (144,292) .. controls (144,285.92) and (148.92,281) .. (155,281) .. controls (161.08,281) and (166,285.92) .. (166,292) .. controls (166,298.08) and (161.08,303) .. (155,303) .. controls (148.92,303) and (144,298.08) .. (144,292) -- cycle ;
		\draw  [fill={rgb, 255:red, 0; green, 0; blue, 0}  ,fill opacity=0.25 ,even odd rule] (194.83,361.97) .. controls (185.97,352.87) and (182,342.37) .. (185.96,338.52) .. controls (189.92,334.66) and (200.31,338.92) .. (209.17,348.03) .. controls (218.03,357.13) and (222,367.63) .. (218.04,371.48) .. controls (214.08,375.34) and (203.69,371.08) .. (194.83,361.97) -- cycle ;
		\draw  [fill={rgb, 255:red, 0; green, 0; blue, 0}  ,fill opacity=0.25 ,even odd rule] (298,394) .. controls (318,384) and (306,413) .. (326,403) .. controls (346,393) and (358,413) .. (352,422) .. controls (346,431) and (318,453) .. (298,423) .. controls (278,393) and (278,404) .. (298,394) -- cycle ;
		\draw [fill={rgb, 255:red, 0; green, 0; blue, 0}  ,fill opacity=0.25 ,even odd rule] (421.24,241.63) -- (430.81,235.02) -- (422.21,253.22) -- cycle ;
		\draw  [fill={rgb, 255:red, 0; green, 0; blue, 0}  ,fill opacity=0.25 ,even odd rule] (452.06,186.86) .. controls (452.06,183.01) and (455.18,179.89) .. (459.03,179.89) .. controls (462.88,179.89) and (466,183.01) .. (466,186.86) .. controls (466,190.7) and (462.88,193.82) .. (459.03,193.82) .. controls (455.18,193.82) and (452.06,190.7) .. (452.06,186.86) -- cycle ;
		\draw [color={rgb, 255:red, 74; green, 144; blue, 226 }  ,draw opacity=1 ]   (423.88,242.88) .. controls (446.88,242.88) and (424.03,269.97) .. (450.03,256.97) ;
		\draw  [fill={rgb, 255:red, 0; green, 0; blue, 0}  ,fill opacity=0.25 ,even odd rule] (462.03,247.97) -- (466,261) -- (466,269) -- (459,262) -- (450.03,256.97) -- cycle ;
		\draw [color={rgb, 255:red, 74; green, 144; blue, 226 }  ,draw opacity=1 ]   (466,269) .. controls (489,269) and (385,312) .. (423,313) ;
		\draw  [fill={rgb, 255:red, 0; green, 0; blue, 0}  ,fill opacity=0.25 ,even odd rule] (423,313) .. controls (432,311) and (428,309) .. (436,309) .. controls (444,309) and (443,317) .. (437,326) .. controls (431,335) and (431,345) .. (424,337) .. controls (417,329) and (414,315) .. (423,313) -- cycle ;
		\draw  [fill={rgb, 255:red, 0; green, 0; blue, 0}  ,fill opacity=0.25 ,even odd rule] (469.47,346.7) -- (463.94,354.31) -- (455,351.4) -- (455,342) -- (463.94,339.09) -- cycle ;
		\draw [color={rgb, 255:red, 74; green, 144; blue, 226 }  ,draw opacity=1 ]   (437,326) .. controls (460,326) and (429,355) .. (455,342) ;
		\draw  [fill={rgb, 255:red, 0; green, 0; blue, 0}  ,fill opacity=0.25 ,even odd rule] (56.84,217.91) .. controls (55.13,205.88) and (96.57,190.03) .. (149.41,182.5) .. controls (202.26,174.97) and (246.48,178.61) .. (248.19,190.64) .. controls (249.91,202.67) and (208.46,218.53) .. (155.62,226.06) .. controls (102.78,233.59) and (58.56,229.94) .. (56.84,217.91) -- cycle ;
		\draw  [fill={rgb, 255:red, 0; green, 0; blue, 0}  ,fill opacity=0.25 ,even odd rule] (375.36,75.93) -- (381.09,120.3) -- (350.64,153.07) -- (344.91,108.7) -- cycle ;
		\draw [color={rgb, 255:red, 74; green, 144; blue, 226 }  ,draw opacity=1 ]   (463.94,354.31) .. controls (503.94,324.31) and (475,398) .. (515,368) ;
		
		\draw (305,243) node [anchor=north west][inner sep=0.75pt]   [align=left] {$x_0$};
		\draw (268,185) node [anchor=north west][inner sep=0.75pt]   [align=left] {$r$};
		\draw (425,125) node [anchor=north west][inner sep=0.75pt]   [align=left] {$2r$};
		\draw (299,81) node [anchor=north west][inner sep=0.75pt]   [align=left] {....};
		\draw (239,381) node [anchor=north west][inner sep=0.75pt]   [align=left] {....};
		\draw (299,210.5) node [anchor=north west][inner sep=0.75pt]   [align=left] {$\bar p$};
		\draw (417,286) node [anchor=north west][inner sep=0.75pt]   [align=left] {.};
		\draw (423,280) node [anchor=north west][inner sep=0.75pt]   [align=left] {.};
		\draw (420,283) node [anchor=north west][inner sep=0.75pt]   [align=left] {.};
		\draw (492,350) node [anchor=north west][inner sep=0.75pt]   [align=left] {$\gamma_1$};
		\draw (469,325) node [anchor=north west][inner sep=0.75pt]   [align=left] {$p^1$};
		\draw (444,317) node [anchor=north west][inner sep=0.75pt]   [align=left] {$\gamma_2$};
		\draw (395,322) node [anchor=north west][inner sep=0.75pt]   [align=left] {$p^2$};
		\draw (405,289) node [anchor=north west][inner sep=0.75pt]   [align=left] {$\gamma_3$};
		\draw (432.81,238.02) node [anchor=north west][inner sep=0.75pt]   [align=left] {$\gamma_n$};
		\draw (401.81,238.02) node [anchor=north west][inner sep=0.75pt]   [align=left] {$p^n$};
		\draw (425,269) node [anchor=north west][inner sep=0.75pt]   [align=left] {$\gamma_{n-1}$};
		\draw (466,245) node [anchor=north west][inner sep=0.75pt]   [align=left] {$p^{n-1}$};

	\end{tikzpicture}
    \end{center} 
    \caption{Part of a sample curve $\gamma=\gamma_1 * p^1 * \cdots * p^n$ that satisfies $\gamma(t) \notin \mathscr{G}$, each $\operatorname{diam}(p^i) < \frac{r}{10}$.}
    \label{xzj}
    \end{figure}
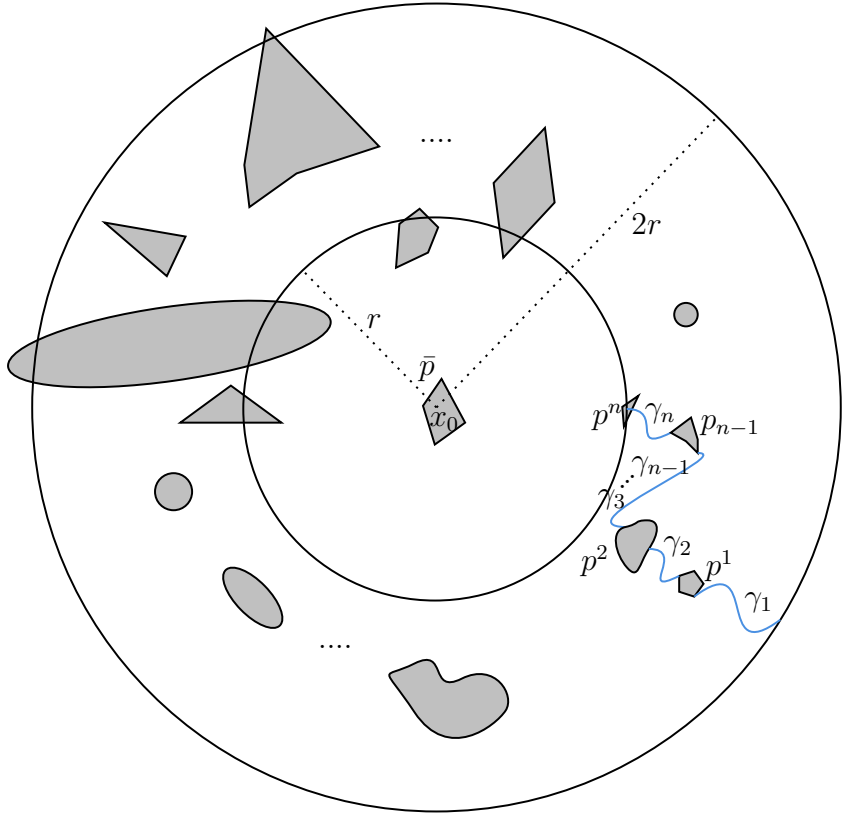

We denote the cardinality of a set $A$ by $\#A$. 
\begin{lemma}\label{number}
		$\# \mathscr{G} \leq N$, where $N>0$ depends only on $\alpha$ and $\tau$.
\end{lemma}
	
\begin{proof} 
Given $p \in \mathscr{G}$, choose $a \in  p \cap A(x_0,r) $. Since $\operatorname{diam}(p) \geq \frac{r}{10}$, we have $p \not \subset B(a, r/40)$. By the $\tau$-fatness of $p$ and 
the upper Ahlfors 2-regularity of $X$, we have 
	\begin{equation}\label{num}
		\tau \cdot(r/40)^2  \leq \mathcal{H}^2(p \cap B(a, r/40))   \leq \mathcal{H}^2(B(x_0, 3 r))  \leq \alpha \cdot 9 r^2.
	\end{equation}
	Summing \eqref{num} over for all $p \in \mathscr{G}$ yields 
	$$\frac{\tau r^2}{1600} \cdot (\# \mathscr{G}) \leq \sum_{p \in  \mathscr{G}} \mathcal{H}^2(p \cap B(a, r/40)) \leq \alpha \cdot 9r^2, $$
	so we have the uniformly upper bound of $\# \mathscr{G}:$
	$$\# \mathscr{G} \leq \frac{14400\alpha}{\tau} =: N(\alpha, \tau). $$	
\end{proof}	
	
	Finally, we prove the uniform boundedness of $\Mod_\mathrm{T} \Gamma_{x_0, r}$.
	Since $\rho$ is an admissible function for $\Gamma_{x_0, r}$ by Lemma \ref{kro}, we have
	\begin{equation}\label{unif}
		\Mod_\mathrm{T} \Gamma_{x_0, r}\leq \int_{\Omega} \rho^2 d \mathcal{H}^2+\sum_{p \in \mathcal{C}(\Omega)} \rho(p)^2.
	\end{equation}
	
	We divide the right-hand side of \eqref{unif} into two terms and estimate them separately. First, we have
	\begin{equation}\label{12}
		\begin{aligned} \int_{\Omega} \rho^2 d \mathcal{H}^2 & =\int_{\Omega\cap A(x_0,r)} \frac{1}{r^2} d \mathcal{H}^2 = \frac{1}{r^2} \cdot \mathcal{H}^2 (\Omega\cap A(x_0,r)) \\ & \leq \frac{1}{r^2} \mathcal{H}^2 (B\left(x_0, 3 r\right)) \leq 9\alpha,  \end{aligned}
	\end{equation}
	by Lemma \ref{number} and the upper $2$-regularity of $X$. Secondly, we can estimate the sum in \eqref{unif} as follows:
	
	\begin{equation}\label{23}
		\begin{aligned} \sum_{p \in \mathcal{C}(\Omega)} \rho(p)^2 & =\sum_{p \in \mathscr{G}} \rho(p)^2+\sum_{p \notin \mathscr{G}} \rho(p)^2 = \# \mathscr{G}+\sum_{p \notin \mathscr{G}} \frac{\operatorname{diam}(p)^2}{r^2} \\ & \leq N+\sum_{p \notin \mathscr{G}} \frac{\mathcal{H}^2(p)}{\tau \cdot r^2} \leq N+\frac{\mathcal{H}^2(B(x_0,3r))}{\tau \cdot r^2} \\ & \leq N+\frac{9\alpha}{\tau}. \end{aligned}
	\end{equation}
	By \eqref{unif}, \eqref{12}, and \eqref{23}, we have $\Mod_\mathrm{T} \Gamma_{x_0, r} \leq 9\alpha +N+\frac{9\alpha}{\tau}$. 
\end{proof}


\section{Proof of Theorem \ref{bz}}\label{proofthm}

In this section, we prove that the $\frac{\pi}{2}$-QC map $f$ constructed in Section \ref{existence} preserves the classes of complementary components and the image is a circle domain on $\mathbb{S}^2$. 

We first recall that $f: \Omega \to D$ is the locally uniform limit of the sequence of $\frac{\pi}{2}$-QC maps $(f_j)_j$ given in Section \ref{existence}. Moreover, if $\ell=1,2,\ldots$ then 
$p_\ell \in \mathcal{C}_N(\Omega)$ and 
\begin{equation} \label{faith}
\hat{f}_j(p_\ell)\to q_\ell \quad \text{as} \quad j \to \infty 
\end{equation} 
in the Hausdorff sense. We prove Theorem \ref{bz} in three steps:

\textbf{Step 1: From point to point.} To prove \eqref{pro1}, we assume that $\widetilde{p}= \{a\} \in \mathcal{C}_P(\Omega)$ and suppose towards a contradiction that 
$\hat{f}(\widetilde{p}) \in \mathcal{C}_N(D)$. We fix a Jordan curve $J \subset \Omega$, and set 
$$R:=\operatorname{dist}(a,J)>0.$$

Then we can find a $c>0$ and a sequence $(b_k)_{k=1}^\infty$ of points in $\Omega$ converging to $a$, such that for every $k$, we have
\begin{equation} \label{worsor}
	\limsup_{j \to \infty}d_{\mathbb{S}^2}(f_j(b_k),f_j(a)) \geq c >0.  
\end{equation}   
Let $(\kappa_k)_{k=1}^\infty$ be a sequence such that
$$d(a,b_k)<\kappa_k<R \quad \text{for every } k,\,\,
\text{and} \,\,
\kappa_k\to 0 \,\, \text{as} \,\,k\to\infty.$$

Moreover, let $I_k \subset B(a,\kappa_k)$, $k=1,2,\ldots$, be a curve in $X$ joining $a$ to $b_k$. Since $\kappa_k<R$, each $I_k$ is disjoint from $J$.
For $j,k=1,2,\ldots$, let $\Lambda_{j,k}$ be the family of curves in $\hat{\Omega}_j\setminus  \{\pi_{\Omega_j}(a)\}$
joining $\pi_{\Omega_j}(I_k)$ and $\pi_{\Omega_j}(J)$.

By \eqref{worsor} and Proposition \ref{cir}, there is $M=M(c)>0$ such that  
\begin{equation} \label{positive} 
	\limsup_{j \to \infty} \Mod_\mathrm{T} \hat{f}_j(\Lambda_{j,k}) \geq M \quad \text{for every } k.  
\end{equation}

On the other hand, every curve $\gamma \in \Lambda_{j,k}$ intersects both $\pi_{\Omega_j}(S(a,\kappa_k))$ and $\pi_{\Omega_j}(S(a,R))$. Therefore, 
if we denote the family of curves joining $\pi_{\Omega_j}(S(a,\kappa_k))$ and $\pi_{\Omega_j}(S(a,R))$ in $\hat{\Omega}_j$ by $\Gamma_{R, \epsilon}^j$, we have 

\begin{equation}\label{mbj}
	\Mod_\mathrm{T} (\Lambda_{j,k}) \leq \Mod_\mathrm{T} (\Gamma_{R, \kappa_k}^j).
\end{equation}

By \eqref{mbj}, Proposition \ref{asyb}, and the assumption $\widetilde{p} \in \mathcal{C}_P(\Omega)$, we have 
$$\limsup_{j \to \infty} \Mod_\mathrm{T} (\Lambda_{j,k}) \leq \zeta(k), \quad \text{where } \zeta(k) \to 0 \text{ as } k \to \infty.$$
This contradicts \eqref{positive} and Theorem \ref{qcinvariance} (quasi-invariance of the transboundary modulus), so $\hat{f}(\widetilde{p})$ must be a point-component.

\textbf{Step 2: The image is a circle domain.} To prove the first claim of \eqref{pro2}, i.e., that $\hat{f}(p_\ell)=q_\ell$ for all the non-trivial components in \eqref{faith}, we use the idea in the proof of \eqref{pro1}.

From a quasiconformal version of Carath\'eodory's kernel convergence theorem \cite[Lemma 2.14]{ntalampekos2023conformal}, we know that $q_\ell \subset \hat{f}(p_\ell)$. 
We argue by contradiction and suppose that $q_\ell \subsetneq \hat{f}(p_\ell)$. Then there are a $c>0$ and a sequence $(b_k)_k$ of points in $\Omega$ such that $d(b_k,p_\ell) \to 0$ as $k \to \infty$ and
\begin{equation} \label{dist12} 
	\limsup_{j \to \infty}d_{\mathbb{S}^2}(f_j(b_k),\hat{f}_j(p_\ell)) \geq c >0.  
\end{equation}
We fix a Jordan curve $J\subset\Omega$. After passing to a subsequence if necessary, we may assume that the sequence $(b_k)_k$ converges to some point $a\in \partial p_{\ell}$. As in Step 1, we can choose curves $I_k$, $k=1,2,\ldots$, in $X$ joining $a$ and $b_k$, such that $\diam(I_k) \to 0$ as $k \to \infty$. We then denote by $\Lambda_{j,k}$ the family of all curves joining $\pi_{\Omega_j}(I_k)$ and $\pi_{\Omega_j}(J)$ in $\hat{\Omega}_j\setminus  \{\pi_{\Omega_j}(a)\}$. 

Combining \eqref{dist12} with Proposition \ref{cir} shows that there is $M>0$ for which 
\begin{equation} \label{positive12'} 
	\limsup_{j \to \infty} \Mod_\mathrm{T} \hat{f}_j(\Lambda_{j,k}) \geq M \quad \text{for every } k.
\end{equation} 
On the other hand, since \eqref{mbj} remains valid and $\diam(I_k) \to 0$, Proposition \ref{asyb} yields  
$$\limsup_{j \to \infty} \Mod_\mathrm{T} (\Lambda_{j,k}) \leq \zeta(k), \quad \text{where } \zeta(k) \to 0 \text{ as } k \to \infty.$$
This contradicts \eqref{positive12'} and Theorem \ref{qcinvariance} (quasi-invariance of the transboundary modulus). It follows that $\hat{f}(p_\ell)$ is the disk $q_\ell$. Combining Steps 1 and 2, we conclude that $D$ is a circle domain.

\textbf{Step 3: From non-trivial to non-trivial.}
Finally, we need to show the second claim of \eqref{pro2} i.e., $\hat{f}(\mathcal{C}_N(\Omega)) \subset \mathcal{C}_N(D)$.

Fix $p_\ell \in \mathcal{C}_N(\Omega)$ and a Jordan curve $J \subset \Omega$. Then $p_\ell \in \mathcal{C}_N(\Omega_j)$ and $J \subset \Omega_j$ for every $j \geq \ell$. 
Denote by $\Lambda_j'$ the family of all curves joining $\pi_{\Omega_j}(p_\ell)$ and $\pi_{\Omega_j}(J)$ in $\hat{\Omega}_j$.
Towards a contradiction, we assume that $ \hat{f}(p_\ell)=q_\ell$ is a point. We claim that 
\begin{equation} \label{positive1234} 
	\limsup_{j \to \infty} \Mod_\mathrm{T} \hat{f}_j(\Lambda_j') =0. 
\end{equation}
We need the following lemma to prove \eqref{positive1234}. 

\begin{lemma}\label{separate}
	For every $R>0$, there exist an index $j_R\geq 1$ and a radius $0<r_R<R$, such that for every $j>j_R$ and every $q\in \mathcal{C}(D_j)$ with $q \cap S(q_\ell, R) \neq \emptyset$, we have $q\cap S(q_\ell, r_R)= \emptyset$.
\end{lemma}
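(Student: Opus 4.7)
The plan is to argue by contradiction. If the lemma fails at some $R>0$, I can extract sequences $j_n\to\infty$, $r_n\to 0$, and $q_n\in\mathcal{C}(D_{j_n})$ with $q_n$ meeting both $S(q_\ell,R)$ and $S(q_\ell,r_n)$. Each $D_{j_n}$ is a circle domain, so every $q_n$ is a closed spherical disk $\overline{B}(c_n,\rho_n)$, non-degenerate for large $n$ since $\diam(q_n)\geqslant R-r_n$. The normalization $q_{j_n,0}=\mathbb{S}^2\setminus\mathbb{D}(0,1)$ from Section \ref{existence} together with the disjointness of complementary components forces every $q_n$ of index $m_n\neq 0$ into $\overline{\mathbb{D}(0,1)}$ (the case $m_n=0$ is excluded for large $n$, as $q_{j_n,0}$ is a fixed set at positive distance from $q_\ell$). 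Passing to a subsequence I may assume $c_n\to c$ and $\rho_n\to\rho$ with $\rho$ bounded away from $\pi$, so $q_n\to Q:=\overline{B}(c,\rho)$ in the Hausdorff sense on $\mathbb{S}^2$. Taking limits of the chosen intersection points shows $q_\ell\in Q$ and that $Q$ contains a point at spherical distance $R$ from $q_\ell$; in particular $Q$ is a non-degenerate closed spherical disk of diameter at least $R$.

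Next I verify that $D$ meets the interior of $Q$. Under the Step~3 standing assumption that $q_\ell=\{x_0\}$ is a point-component of $\mathbb{S}^2\setminus D$, the connected component of $\mathbb{S}^2\setminus D$ containing $x_0$ is the singleton $\{x_0\}$. Since $Q$ is connected and contains $x_0$, the inclusion $Q\subset\mathbb{S}^2\setminus D$ would force $Q=\{x_0\}$, contradicting $\diam(Q)\geqslant R>0$. Therefore $Q\cap D\neq\emptyset$, and since $D$ is open in $\mathbb{S}^2$ while $Q$ is a round disk of positive radius, this upgrades to $D\cap\interior(Q)\neq\emptyset$.

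The contradiction will then follow from locally uniform convergence. Choosing $y\in D\cap\interior(Q)$, writing $y=f(x)$ for some $x\in\Omega$, and setting $y_n:=f_{j_n}(x)\in D_{j_n}$, I have $y_n\to y$. Because $y\in B(c,\rho)$ and $(c_n,\rho_n)\to(c,\rho)$, a triangle inequality gives $d_{\mathbb{S}^2}(y_n,c_n)<\rho_n$ for large $n$, so $y_n\in q_n$, contradicting $q_n\cap D_{j_n}=\emptyset$.

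The main obstacle I anticipate is the Hausdorff bookkeeping in the first paragraph, namely ensuring that the limiting disk $Q$ is proper. The lower bound $\rho>0$ comes for free from the two-scale condition $\diam(q_n)\geqslant R-r_n$, but ruling out $\rho=\pi$ relies on the explicit normalization of $q_{j_n,0}$ together with the disjointness of circle-domain complementary components. Once $Q$ is secured as a proper spherical disk of positive radius, the remaining steps—using the Step~3 hypothesis to plant a $D$-point in $\interior(Q)$, and then exploiting locally uniform convergence to relocate $y_n$ into $q_n$—are routine.
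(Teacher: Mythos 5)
Your argument is correct, but it takes a genuinely different route from the paper's. The paper argues by contradiction on the domain side: it pulls the offending components back to $p^i\in\mathcal{C}_N(\Omega_{j_i})$, extracts a Hausdorff limit $\bar p\in\mathcal{C}(\Omega)$ in $X$, and invokes the quasiconformal Carath\'eodory kernel convergence theorem (the same lemma already used in Step 2) to conclude that $q^i=\hat f_{j_i}(p^i)$ accumulates on $\hat f(\bar p)$; identifying $\hat f(\bar p)=q_\ell$, a point, then contradicts $q^i\cap S(q_\ell,R_0)\neq\emptyset$. You instead stay entirely on $\mathbb{S}^2$ and exploit the circle-domain structure: the offending components are round disks, the normalization $q_{j,0}=\mathbb{S}^2\setminus\mathbb{D}(0,1)$ confines them to $\overline{\mathbb{D}(0,1)}$, compactness of centers and radii produces a limit disk $Q\ni q_\ell$ with $\diam(Q)\geqslant R$, the Step 3 hypothesis that the complementary component of $D$ through $q_\ell$ is the singleton $\{q_\ell\}$ forces $D\cap\interior(Q)\neq\emptyset$, and pointwise convergence $f_{j_n}(x)\to f(x)$ then places points of $D_{j_n}$ inside $q_n$, a contradiction. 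What each approach buys: yours avoids the external kernel-convergence lemma and is more elementary and self-contained, at the price of leaning on the roundness of the elements of $\mathcal{C}(D_j)$ and on the standing Step 3 assumption; the paper's argument never uses the circle geometry and simply reuses the machinery set up in Step 2, so it is structurally uniform with the rest of the proof and would survive without the circle-domain normalization. One small point to flesh out in a full write-up: excluding the index $m_n=0$ requires the one-line justification that $q_\ell=\hat f(p_\ell)$ and $q_0\subset\hat f(p_0)$ lie in distinct, hence disjoint and positively separated, complementary components of $D$; alternatively, your limiting argument handles the fixed component $\mathbb{S}^2\setminus\mathbb{D}(0,1)$ directly, so no exclusion is really needed.
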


\begin{proof}
	We argue by contradiction. Suppose that there exist $R_0>0$, a subsequence $(f_{j_i})_i$ of $(f_j)_j$, and a sequence of complementary components $p^i\in \mathcal{C}_N(\Omega_{j_i})$, such that for each $q^i:=\hat{f}_{j_i}(p^i) \in \mathcal{C}(D_{j_i})$ satisfying $q^i \cap S(q_\ell, R_0) \neq \emptyset$, we have 
	\begin{equation}\label{j1}
		\lim_{i \to \infty}d_{\mathbb{S}^2}(q_\ell ,q^i)=0.
	\end{equation}
	
	After taking a subsequence if necessary, we may assume that $(p^i)_i$ converges to some $\bar p \in \mathcal{C}(\Omega)$ in the Hausdorff sense. Let $\bar q:=\hat{f}(\bar p)$. Then, by Carath\'eodory's kernel convergence theorem, for every $\epsilon>0$, there exist a $\delta>0$ and an index $i_\epsilon \geq 1$ such that 
	$$\hat{f}_{j_i}\big(N_\delta(\bar p)\big) \subset N_{\epsilon}(\bar q)\quad \text{for all $i>i_\epsilon$},$$
	where $N_\delta(\bar p)$ denotes the $\delta$-neighborhood of $\bar p$ in $X$, and $N_\epsilon(\bar q)$ denotes the $\epsilon$-neighborhood of $\bar q$ in $\mathbb{S}^2$.
	
	On the other hand, by the Hausdorff convergence of $(p^i)_i$, there exists an index $i_\delta\geq 1$ such that $p^i \in N_\delta(\bar p)$ for every $i>i_\delta$. It follows that 
	$$d_{\mathbb{S}^2}(\bar q ,q^i)\to 0\quad \text{as $i\to \infty$}. $$
	Combining this with \eqref{j1}, we conclude that $\bar q=q_\ell$. However, this contradicts the assumption that $q^i \cap S(q_\ell, R_0) \neq \emptyset$ for every $i\geq 1$. The proof is complete.
\end{proof}

We continue to prove \eqref{positive1234}. We construct a sequence of radii  $(r_n)_n$ as in the proof of Proposition \ref{asyb}: Let $$r_0:= d_{\mathbb{S}^2}(f(J), q_\ell)>0.$$ Since $f_j \to f$ locally uniformly in $\Omega$, we may assume that $$d_{\mathbb{S}^2}(f_j(J), q_\ell)>r_{0}/2$$ for all sufficiently large indices $j$.
 We first define $r_1:= r_0/20$. Assuming that $r_1>r_2>\ldots>r_{n-1}>0$ have been chosen, we apply Lemma \ref{separate} with $R= r_{n-1}/2$ to obtain an index $j_n \geq 1$ and 
 a radius $0<\widetilde{r}_n < r_{n-1}/2$ such that for every $j>j_n$, and every $q\in \mathcal{C}(D_j)$ with $q \cap S(q_\ell, r_{n-1}/2) \neq \emptyset$, we have $q\cap S(q_\ell, \widetilde{r}_n)= \emptyset$. We then define
$$r_n:=\frac{\widetilde{r}_n}{20}.$$

Note that the sequence $(r_n)_n$ is independent of $j$, and satisfies $r_n\to 0$ as $n\to \infty$. For each $n\geq 1$, define the annulus $$\widetilde{A}_n := B(q_\ell, 2r_n) \setminus \overline{B}(q_\ell, r_n) \subset \mathbb{S}^2.$$ 
Then for any fixed  $j$, the sets $ \pi_{D_j}(\widetilde{A}_n)$ are pairwise disjoint. We can now follow the argument in the proof of Proposition \ref{asyb} to conclude that \eqref{positive1234} holds. 
The following lemma and the quasi-invariance of the transboundary modulus will lead us to a contradiction with \eqref{positive1234}.  

\begin{lemma}\label{pm} 
We have $\Mod_\mathrm{T} \Lambda_j' \geq c > 0$, where $c$ is independent of $j$.
\end{lemma}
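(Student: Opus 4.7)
My plan is to reduce Lemma~\ref{pm} to Proposition~\ref{cir} via a case analysis on the atomic mass of $\rho$ at $p_\ell$, together with a topological comparison between the families $\Lambda_j'$ and $\Lambda_j$.

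Let $\rho \in Adm_\mathrm{T}(\Lambda_j')$ and set $t := \rho(p_\ell)$. Since every curve in $\Lambda_j'$ begins at $\pi_{\Omega_j}(p_\ell)$, the admissibility inequality splits as $t + \int_\gamma \rho\,ds + \sum_{p\in|\gamma|,\,p\neq p_\ell}\rho(p) \geq 1$. If $t \geq 1/2$, then the atomic contribution alone gives $A(\rho) \geq t^2 \geq 1/4$ and we are done. Otherwise $t < 1/2$ and for every $\gamma \in \Lambda_j'$,
\[
\int_\gamma \rho\,ds + \sum_{p \in |\gamma|,\, p \neq p_\ell} \rho(p) \;\geq\; 1 - t \;>\; \tfrac{1}{2}.
\]

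In this remaining case, I would fix a point $b \in \Omega$ with $f(b) \neq q_\ell$ in $\mathbb{S}^2$, a point $a \in \partial p_\ell$, and a curve $I$ joining $a$ to $b$, and apply Proposition~\ref{cir} with $\widetilde p = p_\ell$. Since $f_j(b) \to f(b)$ and $q_{j,\ell} \to q_\ell$ is (by hypothesis of Step~3) a distinct point, the proposition gives
\[
\liminf_{j\to\infty}\Mod_\mathrm{T}\hat f_j(\Lambda_j) \;\geq\; \psi_a\bigl(d_{\mathbb{S}^2}(f(b), q_\ell)\bigr) \;>\; 0,
\]
and by the quasi-invariance of the transboundary modulus (Theorem~\ref{qcinvariance}) we obtain $\Mod_\mathrm{T} \Lambda_j \geq c_* > 0$ for all sufficiently large $j$.

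To transfer this bound to $\Lambda_j'$, I would choose the curve $I$ (or a finite collection $\{I_k\}$ of such short curves) to lie along a fixed Jordan curve $C \subset \Omega$ that separates $p_\ell$ from $J$ in $X$; such a $C$ exists because $X$ is a $2$-sphere and $p_\ell$, $J$ are disjoint compact connected sets. Each $\gamma \in \Lambda_j'$ begins at $\pi_{\Omega_j}(p_\ell)$ and terminates at a point of $\pi_{\Omega_j}(J)$, and hence must cross $\pi_{\Omega_j}(C)$. Truncating $\gamma$ at its first crossing of $\pi_{\Omega_j}(C)$ yields a sub-curve lying in a family of the form $\Lambda_j$. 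The displayed inequality above ensures that $2\rho$ (with $\rho(p_\ell)$ discarded) is admissible for this truncated family, so $A(\rho) \geq A(2\rho)/4 \geq \Mod_\mathrm{T}(\Lambda_j)/4 \geq c_*/4$. Taking $c := \min(1/4,\, c_*/4)$ finishes both cases.

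The main obstacle is technical: Proposition~\ref{cir} is stated with a single short curve $I$, whereas the separating curve $C$ is a Jordan loop and a generic $\gamma \in \Lambda_j'$ may cross $C$ far from any prescribed short arc. This can be handled either by (a) covering $C \cap \Omega$ by finitely many short arcs $I_k$ and applying Proposition~\ref{cir} to each family $\Lambda_j^{(k)}$ in turn, then combining via the subadditivity of transboundary modulus under unions, or (b) directly adapting the argument of \cite[Proposition~3.2\,(1)]{esmayli2024conformal} to accept the separating loop $C$ in place of a single short arc $I$. Both routes produce the uniform constant $c > 0$ required in the statement, since all the geometric data ($a$, $b$, $C$, $\psi_a$, and $d_{\mathbb{S}^2}(f(b), q_\ell)$) are fixed independently of $j$.
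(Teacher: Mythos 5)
There is a genuine gap, and it sits exactly at the ``transfer'' step. Your truncation of a curve $\gamma\in\Lambda_j'$ at its first crossing of $\pi_{\Omega_j}(C)$ produces a curve that starts at $\pi_{\Omega_j}(p_\ell)$ and ends on $\pi_{\Omega_j}(C)$; this is \emph{not} a curve of ``the form $\Lambda_j$'', since curves in $\Lambda_j$ must join $\pi_{\Omega_j}(I)$ to $\pi_{\Omega_j}(J)$ while \emph{avoiding} $\pi_{\Omega_j}(p_\ell)$, and your truncated curves do neither. Moreover, the admissibility claim fails: from $\int_\gamma \rho\,ds+\sum_{p\neq p_\ell}\rho(p)>\tfrac12$ along the whole curve you cannot conclude anything about the initial segment up to the first crossing of $C$, because $\rho$ may be concentrated entirely on the $J$-side of $C$; so ``$2\rho$ is admissible for the truncated family'' does not follow (admissibility does not localize to subcurves in this direction — overflowing gives upper bounds for the longer family, which is the opposite of what you need). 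There is also a mismatch with the hypotheses of Proposition \ref{cir}: there $I=I_{a,b}$ must emanate from a point $a\in\partial p_\ell$ and have small diameter, so it cannot ``lie along'' a Jordan curve $C\subset\Omega$ separating $p_\ell$ from $J$; and even granting the image-side bound, Proposition \ref{cir} only controls a $\limsup$, so you would get the lower bound along a subsequence of $j$'s rather than the uniform bound for all $j$ asserted in Lemma \ref{pm}. Finally, note that neither containment nor minorization holds between $\Lambda_j$ and $\Lambda_j'$ (curves of $\Lambda_j$ never meet $\pi_{\Omega_j}(p_\ell)$, curves of $\Lambda_j'$ need not meet $I$), so no soft modulus comparison can close this gap.

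The paper's proof is entirely different and stays on the domain side, never invoking $f_j$, Proposition \ref{cir}, or quasi-invariance inside the lemma. It takes a curve $\gamma$ joining $J$ and $\partial p_\ell$, considers the $1$-Lipschitz function $m(x)=\dist(x,|\gamma|)$, and shows that for a.e.\ small $t$ the level set $m^{-1}(t)$ contains a rectifiable injective curve $\gamma_t$ joining $\partial p_\ell$ and $J$ (separation on the sphere, the coarea inequality, and Semmes' result); here $\diam(p_\ell)>0$ is used. Integrating the admissibility inequality over $t\in(0,\delta)$, applying coarea, bounding the boundary terms by $\sum_p \rho(p)\diam(p)$, and then using H\"older together with cofatness ($\diam(p)^2\leqslant\tau^{-1}\mathcal{H}^2(p)$) and $\mathcal{H}^2(X)<\infty$ yields a lower bound $\Mod_\mathrm{T}\Lambda_j'\geqslant c>0$ with $c$ independent of $j$. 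If you want to repair your argument, you would essentially have to produce, as the paper does, an explicit rich family of curves inside $\Lambda_j'$ carrying controlled $\rho$-mass; the detour through $\Lambda_j$ and the circle-domain estimate cannot supply this.
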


\begin{proof}
	Let $\gamma$ be a curve in $X$ joining $J$ and $\partial p_\ell$, and let $m(x):= \dist(x, |\gamma|)$. Then $m$ is 1-Lipschitz. 
	
	Fix a point $\widetilde{b} \in X$ that is not in the same component of $X \setminus J$ as $p_\ell$. Then there exists a $\delta':=\delta'(\widetilde{b})>0$ such that $E_t := m^{-1}(t)$ separates $|\gamma|$ and $\{\widetilde{b}\}$ when $0<t<\delta'$.
	Since $X$ is homeomorphic to $\mathbb{S}^2$, there exists a continuum $\eta_t \subset E_t$ which also separates $|\gamma|$ and $\{\widetilde{b}\}$ in $X$; see \cite[Ch. 2, Lemma 5.20]{Wil79}. 
	
	Since $\diam(p_\ell)>0$, there exists a $0<\delta<\delta'$ such that $\eta_t$ intersects both $\partial p_\ell$ and $J$ for every $0<t<\delta$. Applying the Coarea inequality (\cite{esmayli2021coarea}) to the 1-Lipschitz function $m$ and $g=1$, we have $\mathcal{H}^1(\eta_t) <\infty$ for almost every $0<t<\delta$. Thus, by (\cite[Proposition 15.1]{semmes1996finding}) there exists an injective 1-Lipschitz curve $\gamma_t$ such that $| \gamma_t| \subset \eta_t$ and $\gamma_t$ connects $\partial p_\ell$ and $J$ in $X$, so we have $\hat{\gamma}_t:= \pi_{\Omega_j}(\gamma_t)\in \Lambda_j'$.
	
	Now we can use the curve $\hat{\gamma}_t$ to find a lower bound for $\Mod_\mathrm{T} \Lambda_j'$. Let $\rho$ be admissible for $\Lambda_j'$. Since $\hat{\gamma}_t \in \Lambda_j'$ for almost every $0<t<\delta$ and the restriction of $\hat{\gamma}_t$ to $\Omega_{j}$ is injective, applying the Coarea inequality yields $$
	\begin{aligned}
		& \delta \leq \int_0^\delta \left[\int_{\hat{\gamma}_t } \rho\, d s+\sum_{p \in \mathcal{C}(\Omega_j) \cap | \hat{\gamma}_t| } \rho (p)\right] dt \\
		& \leq \int_0^\delta \int_{E_t \cap \Omega_j} \rho \, d \mathcal{H}^1 dt+\sum_{p \in \mathcal{C}(\Omega_{j})} \rho(p) \cdot \diam(p) \\
		& \leq \frac{4}{\pi} \left[\int_{\Omega_j} \rho \, d \mathcal{H}^2+\sum_{p \in \mathcal{C}(\Omega_{j})} \rho(p) \cdot \diam(p)\right] =: \frac{4I}{\pi}. \\ 
	\end{aligned}  
	$$
Moreover, applying Hölder's inequality yields 
\begin{equation} \label{rasta}
	I \leq \left[ \mathcal{H}^2(\Omega_j) +  \sum_{p \in \mathcal{C}(\Omega_{j})} \diam(p)^2 \right]^{\frac{1}{2}} 
		\left[\int_{\Omega_j} \rho^2 \, d\mathcal{H}^2 + \sum_{p \in \mathcal{C}(\Omega_{j})} \rho(p)^2\right]^{\frac{1}{2}}.  
\end{equation} 
	Since $\mathcal{H}^2(X)<\infty$ and $\diam(p)^2 \leq \tau^{-1} \mathcal{H}^2(p)$ for every $p \in \mathcal{C}(\Omega_j)$ by cofatness, it follows that 
\begin{equation} \label{nasta}	 
		\mathcal{H}^2(\Omega_j) +  \sum_{p \in \mathcal{C}(\Omega_{j})} \diam(p)^2  \leq M < \infty, 	
\end{equation} 
	where $M$ does not depend on $j$. Since the estimates hold for all admissible functions $\rho$, the claim follows by combining \eqref{rasta} and \eqref{nasta}. 
\end{proof}

Lemmas \ref{pm} and \ref{qcinvariance} (quasi-invariance of transboundary modulus) contradict \eqref{positive1234}. We conclude that $\hat{f}$ maps non-trivial components to non-trivial components. The proof of Theorem \ref{main} is complete. 

\begin{remark}\label{diam2yx}
We demonstrate how the above arguments can be applied to prove Theorem \ref{dimi2}, leaving the details to the reader. Suppose that $\Omega_2 \in \mathcal{F}_2(X)$, and let $f:\Omega_2 \to D$ 
be a conformal map onto a circle domain $D \subset \mathbb{S}^2$. Fix $p \in \mathcal{C}_N(\Omega_2)$ and a Jordan curve $J \subset \Omega_2$, and let 
$\Gamma$ be the family of curves joining $\pi_{\Omega_2}(p)$ and $\pi_{\Omega_2}(J)$ in $\hat{\Omega}_2$. The proof of Lemma \ref{pm} can be adapted for $\ell^2$-domains to 
show that $\Mod_\mathrm{T} \Gamma >0$. 

On the other hand, since the circle domain $D$ is cofat, the proof of Proposition \ref{asyb} can be modified to show that if $\hat{f}(p) \in \mathcal{C}_P(D)$, then $\Mod_\mathrm{T} \hat{f}(\Gamma)=0$; 
here we also need the countable connectedness of $D$. We arrive at a contradiction with the conformal invariance of the transboundary modulus (Theorem \ref{qcinvariance}). Theorem \ref{dimi2} 
follows. 
\end{remark}




\section{Proof of Theorem \ref{nontop2}}
In this section we demonstrate the sharpness of Theorem \ref{dimi2} and Remark \ref{diam2yx}. Recall that $\mathcal{F}_\alpha(X)$ is the collection of $\ell^\alpha$-domains on $X$, i.e., the countably connected subdomains $\Omega_\alpha$ of $X$ satisfying  $$\sum_{p \in \mathcal{C}(\Omega_\alpha)} \operatorname{diam}(p)^\alpha<\infty. $$  

\begin{theoremx}[Theorem \ref{nontop2}]
For every $\alpha>2$ there is a domain $U_\alpha \in \mathcal{F}_\alpha(\mathbb{S}^2)$, such that $\overline{\mathbb{D}} \in \mathcal{C}_N(U_\alpha)$ but $\hat{f}(\overline{\mathbb{D}} ) \in \mathcal{C}_P(D)$ for every conformal homeomorphism $f: U_\alpha \to D$ onto a circle domain $D \subset \mathbb{S}^2$.
\end{theoremx}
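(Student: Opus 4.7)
The plan is to construct $U_\alpha$ explicitly from a cloud of thin components accumulating on $\partial\mathbb{D}$ at dyadic scales, and to prove $\hat f(\overline{\mathbb{D}}) \in \mathcal{C}_P(D)$ by showing that the transboundary modulus of a specific curve family around $\overline{\mathbb{D}}$ vanishes. Fix $\alpha > 2$ and choose positive integers $N_n$ and diameters $d_n = 2^{-n}$ with $\sum_n N_n d_n^\alpha < \infty$ and $\sum_n N_n d_n^2 = \infty$ (for example $N_n = 4^n$). Place at each scale $n$ a family $\{E_{n,k}\}_{k=1}^{N_n}$ of pairwise disjoint short closed segments of diameter $d_n$ in a thin annular neighborhood $A_n$ of $\partial\mathbb{D}$ at distance $\asymp 2^{-n}$, with the annuli $A_n$ pairwise disjoint. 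The segments accumulate only on $\partial\mathbb{D} \subset \overline{\mathbb{D}}$, so $\overline{\mathbb{D}} \cup \bigcup_{n,k} E_{n,k}$ is closed and $U_\alpha := \mathbb{S}^2 \setminus (\overline{\mathbb{D}} \cup \bigcup_{n,k} E_{n,k})$ is a countably connected domain in $\mathcal{F}_\alpha(\mathbb{S}^2)$ with $\overline{\mathbb{D}} \in \mathcal{C}_N(U_\alpha)$.

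Given any conformal homeomorphism $f \colon U_\alpha \to D$ onto a circle domain (provided by Koebe), fix a Jordan curve $J \subset U_\alpha$ separating $\overline{\mathbb{D}}$ from $\infty$, and let $\Gamma$ be the family of curves in $\hat U_\alpha$ joining $\pi(\overline{\mathbb{D}})$ to $\pi(J)$. The conformal invariance of the transboundary modulus (Theorem \ref{qcinvariance}) gives $\Mod_\mathrm{T} \hat f(\Gamma) = \Mod_\mathrm{T} \Gamma$. Since the complementary disks of any circle domain are pairwise disjoint open subsets of $\mathbb{S}^2$, their diameters are automatically $\ell^2$-summable; hence if $\hat f(\overline{\mathbb{D}})$ were a disk, the coarea-plus-H\"older argument of Lemma \ref{pm} applied in the target $D$ would yield $\Mod_\mathrm{T} \hat f(\Gamma) > 0$. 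The theorem therefore reduces to showing $\Mod_\mathrm{T} \Gamma = 0$; together with the He--Schramm uniqueness (up to M\"obius) of circle-domain uniformizations, this yields the conclusion for every such $f$.

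To show $\Mod_\mathrm{T} \Gamma = 0$ I adapt the averaging strategy from the proof of Proposition \ref{asyb}. At each scale $n$, construct an admissible transboundary function $\rho_n$ for curves crossing $A_n$ by assigning balanced constants on $A_n \cap U_\alpha$ and on the component-points $\pi(E_{n,k})$ so that any curve crossing $A_n$, whether or not it tunnels through a segment, picks up contribution at least $1$. The annuli $A_n$ and the segments at different scales are pairwise disjoint, so the $\rho_n$ have disjoint supports, and every curve in $\Gamma$ topologically crosses every $A_n$; consequently the averaged function $\widetilde{\rho}_N := (\rho_1 + \cdots + \rho_N)/N$ is admissible for $\Gamma$, with energy at most $N^{-2}$ times the sum of the energies of the $\rho_n$. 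The crucial input---which fails at $\alpha = 2$---is that the density $N_n$ of segments at scale $n$ can be chosen much larger than the $\ell^2$-threshold $d_n^{-2}$, so that the slit contribution $\sum_k \rho_n(E_{n,k})^2$ dominates the energy of $\rho_n$ and is controlled by $N_n d_n^\alpha$; this keeps the $\rho_n$-energies small enough for averaging to drive the total energy to $0$.

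The main obstacle is the delicate energy bookkeeping in the last step. The construction must be arranged so that (i) each $\rho_n$ is genuinely admissible for the full crossing family of $A_n$---in particular, segment shortcuts at other scales do not spoil admissibility at scale $n$---and (ii) the $\rho_n$-energies are controlled by $\ell^\alpha$-type sums rather than $\ell^2$-type sums. At $\alpha = 2$ these two requirements become mutually exclusive and Lemma \ref{pm} recovers a positive lower bound (as reflected in Theorem \ref{dimi2}); it is the openness of the inequality $\alpha > 2$ that provides the slack needed to resolve them simultaneously.
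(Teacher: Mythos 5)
Your reduction step is fine and matches the paper: if $\hat f(\overline{\mathbb{D}})$ were a nondegenerate disk, the coarea--H\"older argument of Lemma \ref{pm}, adapted to the target circle domain (whose complementary disks are disjoint, hence have $\ell^2$-summable diameters), gives $\Mod_\mathrm{T}\hat f(\Gamma)>0$, and Theorem \ref{qcinvariance} then forces $\Mod_\mathrm{T}\Gamma>0$; this is exactly Remark \ref{diam2yx}. The genuine gap is in the core step, the proof that $\Mod_\mathrm{T}\Gamma=0$ for your construction. First, the energy bookkeeping is wrong as stated: in the transboundary energy the component weights enter as $\sum_k\rho_n(E_{n,k})^2$ with \emph{no} diameter factor, so there is no mechanism by which the slit contribution is ``controlled by $N_nd_n^\alpha$''. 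With the natural balanced assignment (each crossing of $A_n$ meets roughly $N_nd_n$ slits, so each slit gets weight $\asymp (N_nd_n)^{-1}$) the slit energy is $\asymp (N_nd_n^2)^{-1}$; the role of $\alpha>2$ is that one can have $N_nd_n^2\gtrsim 1$ (bounded per-scale energy, enabling the averaging of Proposition \ref{asyb}) while $\sum_n N_nd_n^\alpha<\infty$, not that the slit energy tends to $0$. Second, ``balanced constants on $A_n\cap U_\alpha$'' cannot rescue admissibility: the crossing modulus of a thin annulus of width $\asymp 2^{-n}$ is $\asymp 2^n$, so any constant density on $A_n\cap U_\alpha$ large enough to catch curves threading the gaps has energy $\asymp 2^n\to\infty$. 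Admissibility must therefore be carried by the slits together with a carefully weighted treatment of the gaps, and this is exactly where your construction is unspecified: $4^n$ segments of length $2^{-n}$ have total length $2^n\gg 2\pi$, so they must be stacked on many sub-circles in some definite pattern, the gaps must be made extremely small (or staggered) and penalized by a non-constant (logarithmic) weight, and without these choices a curve sneaking through aligned gaps picks up almost no $\rho_n$-mass.

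For comparison, the paper resolves precisely these two issues by going in the opposite direction: at level $k$ it uses \emph{few long} arcs ($k+1$ arcs of diameter $\asymp (k+1)^{-1}$ on the circle of radius $1+2^{1-k}$, which is where $\sum_k(k+1)^{1-\alpha}<\infty$ uses $\alpha>2$), separated by gaps of angular size $2\pi e^{-2^k}2025^{-k}$. The admissible function puts weight $\frac{1}{(k+1)\log(k+1)}$ on each arc and a logarithmic annular weight of total energy $\lesssim 2^{-k}$ across each gap, so that \emph{every} curve from $\overline{\mathbb{D}}$ to $J$ has infinite weighted length while the total energy is finite; hence $\epsilon\rho$ is admissible for every $\epsilon>0$ and $\Mod_\mathrm{T}\Gamma=0$ directly, with no averaging. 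Your dyadic-slit approach may well be salvageable (many slits per scale with per-slit weight $\asymp(N_nd_n)^{-1}$, super-exponentially small weighted gaps, and the averaging scheme of Proposition \ref{asyb}), but as written the admissibility of each $\rho_n$ and the boundedness of its energy are exactly the points that are asserted rather than proved, and the stated $N_nd_n^\alpha$ bound is not correct.
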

     Note that such a conformal homeomorphism $f$ always exists by the He-Schramm theorem \cite[Theorem 0.1]{he1993fixed}.
\begin{proof}[Proof of Theorem \ref{nontop2}]
   Fix $\alpha>2$. We construct a countably connected domain $U_\alpha \subset \mathbb{S}^2$ whose complementary components include the closed unit disk $\overline{\mathbb{D}}$ as well as countably many subarcs of concentric circles "winding around" $\overline{\mathbb{D}}$, such that $\hat{f}(\overline{\mathbb{D}}) \in \mathcal{C}_P(D)$ for every conformal map $f: U_\alpha \to D$ onto a circle domain $D$. 
   
   We now describe the elements of $\mathcal{C}(U_\alpha)$. First, denote 
   $$p_{0,0}:=\overline{\mathbb{D}}.$$
   Then, given an index $k\geq 1$ and $0\leq j \leq k$, let
    $$p_{k, j}:=\Big\{r_k e^{i \theta_j}: r_k=1+\frac{1}{2^{k-1}},\; \theta_j \in[\alpha_{k, j}, \beta_{k, j}] \Big\},$$
   where 
   \begin{eqnarray*}
   	\left\{\begin{array}{l}\alpha_{k, j}:=\frac{2(j-1) \pi}{k+1}+\theta_k, \\ \beta_{k, j}:=\frac{2 j \pi}{k+1}-\theta_k,\end{array}
   	\right. 
   \end{eqnarray*} 
   and \begin{equation}\label{thetak}
   	\theta_k:=\frac{2 \pi}{e^{2^k} \cdot 2025^k}.
   \end{equation}
   We denote the endpoints of each $p_{k,j}$ by
   $$a_{k, j}:=r_k e^{i \alpha_{k, j}}, \quad b_{k, j}:=r_k e^{i \beta_{k, j}}.$$
   Finally, let $U_\alpha$ be the domain for which 
   $$
   \mathcal{C}(U_\alpha) := \{ p_{k,j} : k=1,2,\ldots, \, j=0,1,\ldots,k \}.
   $$
   Figure \ref{ua} shows the complementary components $p_{0,0}, p_{1,j}, p_{2,j}$ and $p_{3,j}$. 
   
   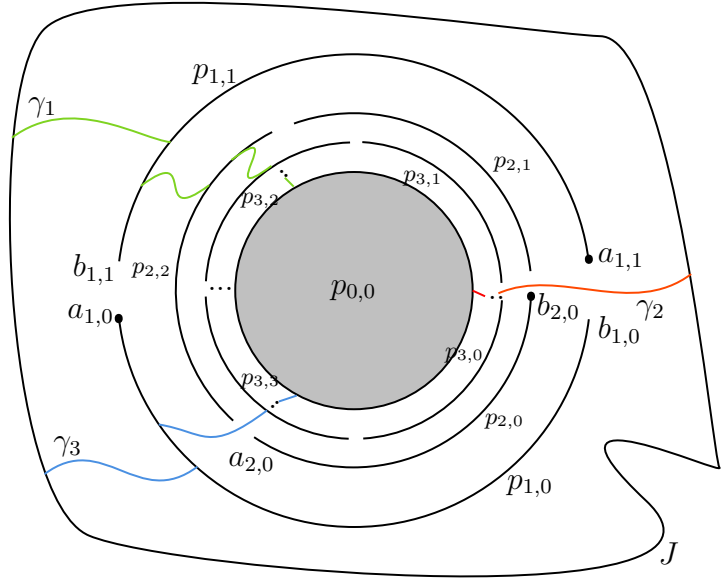
\begin{figure}

   	
   	\tikzset{
   		pattern size/.store in=\mcSize, 
   		pattern size = 5pt,
   		pattern thickness/.store in=\mcThickness, 
   		pattern thickness = 0.3pt,
   		pattern radius/.store in=\mcRadius, 
   		pattern radius = 1pt}
   	\makeatletter
   	\pgfutil@ifundefined{pgf@pattern@name@_1sxsnul90}{
   		\pgfdeclarepatternformonly[\mcThickness,\mcSize]{_1sxsnul90}
   		{\pgfqpoint{0pt}{-\mcThickness}}
   		{\pgfpoint{\mcSize}{\mcSize}}
   		{\pgfpoint{\mcSize}{\mcSize}}
   		{
   			\pgfsetcolor{\tikz@pattern@color}
   			\pgfsetlinewidth{\mcThickness}
   			\pgfpathmoveto{\pgfqpoint{0pt}{\mcSize}}
   			\pgfpathlineto{\pgfpoint{\mcSize+\mcThickness}{-\mcThickness}}
   			\pgfusepath{stroke}
   	}}
   	\makeatother
   	\tikzset{every picture/.style={line width=0.75pt}} 
   	\begin{center}
   	\begin{tikzpicture}[x=0.66pt,y=0.66pt,yscale=-0.85,xscale=0.85]
   		
   		\draw  [fill={rgb, 255:red, 0; green, 0; blue, 0}  ,fill opacity=0.25 ,even odd rule] (229.57,220.68) .. controls (229.57,176.48) and (265.4,140.64) .. (309.61,140.64) .. controls (353.81,140.64) and (389.64,176.48) .. (389.64,220.68) .. controls (389.64,264.89) and (353.81,300.72) .. (309.61,300.72) .. controls (265.4,300.72) and (229.57,264.89) .. (229.57,220.68) -- cycle ;
   		\draw  [draw opacity=0] (151.3,200.91) .. controls (160.98,122.64) and (227.43,61.78) .. (308.44,61.19) .. controls (390.52,60.59) and (458.58,122.09) .. (468.01,201.74) -- (309.61,220.68) -- cycle ; \draw   (151.3,200.91) .. controls (160.98,122.64) and (227.43,61.78) .. (308.44,61.19) .. controls (390.52,60.59) and (458.58,122.09) .. (468.01,201.74) ;  
   		\draw  [draw opacity=0] (467.97,239.99) .. controls (458.46,318.79) and (391.46,379.96) .. (310.05,380.18) .. controls (227.82,380.41) and (159.95,318.39) .. (151.07,238.49) -- (309.61,220.68) -- cycle ; \draw   (467.97,239.99) .. controls (458.46,318.79) and (391.46,379.96) .. (310.05,380.18) .. controls (227.82,380.41) and (159.95,318.39) .. (151.07,238.49) ;  
   		\draw  [draw opacity=0] (428.87,225.99) .. controls (425.88,289.48) and (373.38,340.02) .. (309.1,339.96) .. controls (284.35,339.94) and (261.37,332.43) .. (242.29,319.57) -- (309.2,220.28) -- cycle ; \draw   (428.87,225.99) .. controls (425.88,289.48) and (373.38,340.02) .. (309.1,339.96) .. controls (284.35,339.94) and (261.37,332.43) .. (242.29,319.57) ;  
   		\draw  [draw opacity=0] (269.19,107.94) .. controls (303.86,95.58) and (343.76,99.32) .. (376.62,121.52) .. controls (406.91,141.99) and (424.87,173.87) .. (428.63,207.54) -- (309.61,220.68) -- cycle ; \draw   (269.19,107.94) .. controls (303.86,95.58) and (343.76,99.32) .. (376.62,121.52) .. controls (406.91,141.99) and (424.87,173.87) .. (428.63,207.54) ;  
   		\draw  [draw opacity=0] (228.18,308.43) .. controls (211.18,292.81) and (198.51,272.11) .. (192.76,247.96) .. controls (179.75,193.19) and (206.56,138.29) .. (254.45,113.7) -- (309.2,220.28) -- cycle ; \draw   (228.18,308.43) .. controls (211.18,292.81) and (198.51,272.11) .. (192.76,247.96) .. controls (179.75,193.19) and (206.56,138.29) .. (254.45,113.7) ;  
   		\draw  [draw opacity=0] (409.4,227.21) .. controls (406.15,277.46) and (365.89,317.41) .. (315.87,320.49) -- (309.61,220.68) -- cycle ; \draw   (409.4,227.21) .. controls (406.15,277.46) and (365.89,317.41) .. (315.87,320.49) ;  
   		\draw  [draw opacity=0] (315.24,120.46) .. controls (366.06,123.49) and (406.67,164.47) .. (409.09,215.47) -- (309.2,220.28) -- cycle ; \draw   (315.24,120.46) .. controls (366.06,123.49) and (406.67,164.47) .. (409.09,215.47) ;  
   		\draw  [draw opacity=0] (209.8,214.37) .. controls (213.02,162.8) and (255.23,122.04) .. (306.99,120.72) -- (309.61,220.68) -- cycle ; \draw   (209.8,214.37) .. controls (213.02,162.8) and (255.23,122.04) .. (306.99,120.72) ;  
   		\draw  [draw opacity=0] (306.06,320.62) .. controls (253.77,318.8) and (211.73,276.85) .. (209.68,224.68) -- (309.61,220.68) -- cycle ; \draw   (306.06,320.62) .. controls (253.77,318.8) and (211.73,276.85) .. (209.68,224.68) ;  
   		\draw   (100,45) .. controls (139,2) and (426,47) .. (476,49) .. controls (526,51) and (551.41,333.12) .. (556,340) .. controls (560.59,346.88) and (421.12,281.93) .. (506.56,362.46) .. controls (592,443) and (188,410) .. (131,386) .. controls (74,362) and (61,88) .. (100,45) -- cycle ;
   		\draw [color={rgb, 255:red, 126; green, 211; blue, 33 }  ,draw opacity=1 ]   (79,117.5) .. controls (119,87.5) and (166,118.5) .. (186,120.5) ;
   		\draw [color={rgb, 255:red, 126; green, 211; blue, 33 }  ,draw opacity=1 ]   (166,150) .. controls (206,120) and (172,179.5) .. (212,149.5) ;
   		\draw [color={rgb, 255:red, 126; green, 211; blue, 33 }  ,draw opacity=1 ]   (227,132.5) .. controls (267,102.5) and (214,166.5) .. (254,136.5) ;
   		\draw [color={rgb, 255:red, 126; green, 211; blue, 33 }  ,draw opacity=1 ]   (263,144.5) -- (269,150.5) ;
   		\draw [color={rgb, 255:red, 254; green, 72; blue, 1 }  ,draw opacity=1 ]   (407,222.5) .. controls (448,204.5) and (497,239.5) .. (537,209.5) ;
   		\draw [color={rgb, 255:red, 249; green, 4; blue, 4 }  ,draw opacity=1 ]   (389.64,220.68) -- (398,224.5) ;
   		\draw [color={rgb, 255:red, 74; green, 144; blue, 226 }  ,draw opacity=1 ]   (101,344.5) .. controls (141,314.5) and (164,369.5) .. (204,339.5) ;
   		\draw [color={rgb, 255:red, 74; green, 144; blue, 226 }  ,draw opacity=1 ]   (178,311) .. controls (209,316) and (211,331.5) .. (251,301.5) ;
   		\draw [color={rgb, 255:red, 74; green, 144; blue, 226 }  ,draw opacity=1 ]   (259,295.5) -- (271,291.5) ;
   		\draw  [line width=3] [line join = round][line cap = round] (151,239) .. controls (151,239.33) and (151,239.67) .. (151,240) ;
   		\draw  [line width=3] [line join = round][line cap = round] (241,319) .. controls (241,319) and (241,319) .. (241,319) ;
   		\draw  [line width=3] [line join = round][line cap = round] (468,241) .. controls (468,241) and (468,241) .. (468,241) ;
   		\draw  [line width=3] [line join = round][line cap = round] (468,200) .. controls (468,199.67) and (468,199.33) .. (468,199) ;
   		\draw  [line width=3] [line join = round][line cap = round] (151,202) .. controls (151,202) and (151,202) .. (151,202) ;
   		\draw  [line width=3] [line join = round][line cap = round] (429,225) .. controls (429,224.67) and (429,224.33) .. (429,224) ;
   		
   		\draw (209,216) node [anchor=north west][inner sep=0.75pt]   [align=left] {...};
   		\draw (398,222) node [anchor=north west][inner sep=0.75pt]   [align=left] {..};
   		\draw (256,137) node [anchor=north west][inner sep=0.75pt]   [align=left] {.};
   		\draw (259,140) node [anchor=north west][inner sep=0.75pt]   [align=left] {.};
   		\draw (88,88) node [anchor=north west][inner sep=0.75pt]   [align=left] {$\gamma_1$};
   		\draw (498,225) node [anchor=north west][inner sep=0.75pt]   [align=left] {$\gamma_2$};
   		\draw (250,296) node [anchor=north west][inner sep=0.75pt]   [align=left] {.};
   		\draw (253,293) node [anchor=north west][inner sep=0.75pt]   [align=left] {.};
   		\draw (105,317) node [anchor=north west][inner sep=0.75pt]   [align=left] {$\gamma_3$};
   		\draw (513,388.5) node [anchor=north west][inner sep=0.75pt]   [align=left] {$J$};
   		\draw (292,211.5) node [anchor=north west][inner sep=0.75pt]  [align=left] {$p_{0,0}$};
   		\draw (411,343) node [anchor=north west][inner sep=0.75pt]   [align=left] {$p_{1,0}$};
   		\draw (199,66) node [anchor=north west][inner sep=0.75pt] [align=left] {$p_{1,1}$};
   		\draw (396,301.5) node [anchor=north west][inner sep=0.75pt]  [font=\scriptsize] [align=left] {$p_{2,0}$};
   		\draw (402,128.5) node [anchor=north west][inner sep=0.75pt]   [font=\scriptsize] [align=left] {$p_{2,1}$};
   		\draw (158,200) node [anchor=north west][inner sep=0.75pt]  [font=\scriptsize]   [align=left] {$p_{2,2}$};
   		\draw (369,257) node [anchor=north west][inner sep=0.75pt]  [font=\scriptsize] [align=left] {$p_{3,0}$};
   		\draw (341,138) node [anchor=north west][inner sep=0.75pt]  [font=\scriptsize]  [align=left] {$p_{3,1}$};
   		\draw (231.5,152.5) node [anchor=north west][inner sep=0.75pt]  [font=\scriptsize] [align=left] {$p_{3,2}$};
   		\draw (231.5,274.5) node [anchor=north west][inner sep=0.75pt]  [font=\scriptsize]  [align=left] {$p_{3,3}$};
   		\draw (114.5,228) node [anchor=north west][inner sep=0.75pt]   [align=left] {$a_{1,0}$};
   		\draw (118,194) node [anchor=north west][inner sep=0.75pt]   [align=left] {$b_{1,1}$};
   		\draw (472,190) node [anchor=north west][inner sep=0.75pt]   [align=left] {$a_{1,1}$};
   		\draw (472,236) node [anchor=north west][inner sep=0.75pt]   [align=left] {$b_{1,0}$};
   		\draw (223,328) node [anchor=north west][inner sep=0.75pt]   [align=left] {$a_{2,0}$};
   		\draw (431,220) node [anchor=north west][inner sep=0.75pt]   [align=left] {$b_{2,0}$};
   		
   	\end{tikzpicture}
   	\end{center}
   	\caption{Some complementary components of $U_\alpha$ and the Jordan curve $J \subset U_\alpha$.}
   	\label{ua}
   \end{figure}

    By the above construction, we have 
    $$\begin{aligned} \sum_{p \in C(U_\alpha)} \operatorname{diam}(p)^\alpha & \leq 2^\alpha+\sum_{k=1}^{\infty} \diam \left(p_{k, j}\right)^\alpha \cdot (k+1) \\ & <2^\alpha+\sum_{k=1}^{\infty}\left(\frac{2 \pi\left(1+\frac{1}{2^{k-1}}\right)}{k+1}\right)^\alpha \cdot (k+1)  \\ & <2^\alpha+(2 \pi)^\alpha \cdot 2^{\alpha} \left(\sum_{k=1}^{\infty} \frac{1}{(k+1)^{\alpha-1}}\right)  < \infty.
    \end{aligned}$$
    The last inequality holds since $\alpha>2$. Thus $U_\alpha$ is an $\ell^\alpha$-domain. 
    
    Since $U_\alpha$ is a countably connected domain, by the He-Schramm theorem there exists a conformal homeomorphism $f: U_\alpha \to D$ 
    onto a circle domain $D$. Moreover, $f$ is unique up to postcomposition by a M\"obius transformation. Without loss of generality, we assume $\hat{f}(\infty)=\infty$. To show that $\hat{f}(\overline{\mathbb{D}}) \in \mathcal{C}_P(D)$, we fix a Jordan curve $J \subset U_\alpha$ such that if $V$ is the bounded component of $\mathbb{S}^2\setminus J$, then $\mathbb{S}^2\setminus U_\alpha \subset V$. We denote 
    $$\Gamma_\alpha:=\{\text{curves in } \hat{U}_\alpha \text{ that join } \pi_{U_\alpha}(\overline{\mathbb{D}}) \text{ and } \pi_{U_\alpha}(J) \}.$$
    
    Towards a contradiction, we assume $\hat{f}(\overline{\mathbb{D}}) \in \mathcal{C}_N(D)$. The proof can be divided into three steps. First, we define an admissible function $\rho$ for $\Gamma_\alpha$. Next, we employ $\rho$ to estimate $\Mod_\mathrm{T}(\Gamma_\alpha)$. Finally, with the help of the proof of Theorem \ref{bz} and Remark \ref{diam2yx}, we derive that such an estimate is impossible, thereby $\hat{f}(\overline{\mathbb{D}}) \in \mathcal{C}_P(D)$.
    
    We let $\rho: \hat{U}_\alpha \to [0,\infty]$, 
    \begin{eqnarray*}
    	\rho(x)=\left\{  \begin{array}{ll}
    		\frac{1}{(k+1) \log (k+1)}, & x \in p_{k,j} \,(k \geq 1), \\
    		\frac{1}{|x-c_{k,j}| \log \frac{R_k}{g_k}},  & x \in U_\alpha \cap A_{k,j} \,(k \geq 1),  \\ 
    		0, & \text{otherwise}.  
    	\end{array}
    	\right. 
    \end{eqnarray*} 
   Here  $$A_{k,j}:=B(c_{k, j},R_k)\setminus \overline{B}(c_{k, j},g_k), \quad k=1,2,\ldots \text{ and }  j=0,1,\ldots,k, $$ 
   and the center and radii are defined as follows: 
    \begin{eqnarray*}
    	c_{k, j}:=\left\{\begin{array}{l} \frac{b_{k,j}+a_{k,j+1}}{2}, \text { when } j=0, \ldots, k-1, \\ \frac{b_{k, k}+a_{k, 0}}{2}, \text { when } j=k, \end{array}\right.
    \end{eqnarray*}
  \begin{equation}\label{gk}
    	g_k:=\left|\frac{a_{k, j+1}-b_{k, j}}{2}\right|<\theta_k \cdot r_k, \quad \text{and} \quad R_k:= r_k\cdot {\theta_k}e^{2^k}>e^{2^k} g_k.
    \end{equation}
   
   Recall that since $r_k=1+\frac{1}{2^{k-1}}$,
    the first inequality of \eqref{gk} follows from the fact that the shortest path between two points is a line segment. Moreover, 
    since $\theta_k$ is given by \eqref{thetak}, we have
     $$R_k=\frac{2\pi r_k}{2025^k}<\frac{1}{100 \cdot 2^{k-1}},$$
    which implies that the annuli $A_{k,j}$ are pairwise disjoint. Figure \ref{sh} shows the annuli $A_{1,0}$ and $A_{2,0}$.

    \begin{figure}

 \tikzset{every picture/.style={line width=0.75pt}} 
 \begin{center}
 \begin{tikzpicture}[x=0.75pt,y=0.75pt,yscale=-0.85,xscale=0.85]
 	
 	\draw  [draw opacity=0] (356.95,138.33) .. controls (391.44,151.29) and (417.26,181.09) .. (425.35,217.15) -- (318.5,241.5) -- cycle ; \draw   (356.95,138.33) .. controls (391.44,151.29) and (417.26,181.09) .. (425.35,217.15) ;  
 	\draw  [draw opacity=0] (425.35,264.62) .. controls (416.56,303.97) and (388.09,334.16) .. (352.58,346.23) -- (318.51,240.5) -- cycle ; \draw   (425.35,264.62) .. controls (416.56,303.97) and (388.09,334.16) .. (352.58,346.23) ;  
 	\draw  [draw opacity=0] (539.57,281.64) .. controls (522.8,374.07) and (449.14,446.76) .. (356.06,462.38) -- (318.5,241.5) -- cycle ; \draw   (539.57,281.64) .. controls (522.8,374.07) and (449.14,446.76) .. (356.06,462.38) ;  
 	\draw  [draw opacity=0] (352.09,18.99) .. controls (447.65,33.26) and (523.53,107.55) .. (539.9,202.14) -- (318.51,240.5) -- cycle ; \draw   (352.09,18.99) .. controls (447.65,33.26) and (523.53,107.55) .. (539.9,202.14) ;  
 	\draw  [line width=3] [line join = round][line cap = round] (425,216) .. controls (425,216) and (425,216) .. (425,216) ;
 	\draw  [line width=3] [line join = round][line cap = round] (539,204) .. controls (539,204) and (539,204) .. (539,204) ;
 	\draw  [line width=3] [line join = round][line cap = round] (539,280) .. controls (539,280) and (539,280) .. (539,280) ;
 	\draw  [line width=3] [line join = round][line cap = round] (426,265) .. controls (426,265) and (426,265) .. (426,265) ;
 	\draw  [dash pattern={on 0.84pt off 2.51pt}]  (539.88,203.14) -- (539.57,281.64) ;
 	\draw  [dash pattern={on 0.84pt off 2.51pt}]  (425.35,217.15) -- (425.35,264.62) ;
 	\draw  [fill={rgb, 255:red, 0; green, 0; blue, 0 }  ,fill opacity=0.25 ,even odd rule] (500.42,242.39) .. controls (500.42,221.84) and (518.02,205.18) .. (539.73,205.18) .. controls (561.44,205.18) and (579.04,221.84) .. (579.04,242.39) .. controls (579.04,262.94) and (561.44,279.6) .. (539.73,279.6) .. controls (518.02,279.6) and (500.42,262.94) .. (500.42,242.39)(474.79,242.39) .. controls (474.79,207.68) and (503.86,179.55) .. (539.73,179.55) .. controls (575.59,179.55) and (604.66,207.68) .. (604.66,242.39) .. controls (604.66,277.1) and (575.59,305.23) .. (539.73,305.23) .. controls (503.86,305.23) and (474.79,277.1) .. (474.79,242.39) ;
 	\draw  [fill={rgb, 255:red, 0; green, 0; blue, 0}  ,fill opacity=0.25 ,even odd rule] (402,240.88) .. controls (402,227.99) and (412.45,217.53) .. (425.35,217.53) .. controls (438.24,217.53) and (448.69,227.99) .. (448.69,240.88) .. controls (448.69,253.77) and (438.24,264.23) .. (425.35,264.23) .. controls (412.45,264.23) and (402,253.77) .. (402,240.88)(384.27,240.88) .. controls (384.27,218.19) and (402.66,199.8) .. (425.35,199.8) .. controls (448.03,199.8) and (466.42,218.19) .. (466.42,240.88) .. controls (466.42,263.57) and (448.03,281.96) .. (425.35,281.96) .. controls (402.66,281.96) and (384.27,263.57) .. (384.27,240.88) ;
 	\draw  [line width=3] [line join = round][line cap = round] (426,241) .. controls (426,241) and (426,241) .. (426,241) ;
 	\draw  [line width=3] [line join = round][line cap = round] (540,240) .. controls (540,240) and (540,240) .. (540,240) ;
 	\draw    (331,224.5) .. controls (346,214.5) and (623,271.5) .. (630,257.5) ;
 	
 	\draw (486,299) node [anchor=north west][inner sep=0.75pt]  [font=\small] [align=left] {$A_{1,0}$};
 	\draw (365,263) node [anchor=north west][inner sep=0.75pt]  [font=\small] [align=left] {$A_{2,0}$};
 	\draw (519,190) node [anchor=north west][inner sep=0.75pt]  [font=\small] [align=left] {$a_{1,1}$};
 	\draw (522,277) node [anchor=north west][inner sep=0.75pt]  [font=\small] [align=left] {$b_{1,0}$};
 	\draw (307,226) node [anchor=north west][inner sep=0.75pt]   [align=left] {....};
 	\draw (405,262) node [anchor=north west][inner sep=0.75pt]  [font=\footnotesize] [align=left] {$b_{2,0}$};
 	\draw (405,204) node [anchor=north west][inner sep=0.75pt]  [font=\footnotesize] [align=left] {$a_{2,1}$};
 	\draw (515,225) node [anchor=north west][inner sep=0.75pt]  [font=\small] [align=left] {$c_{1,0}$};
 	\draw (410,242) node [anchor=north west][inner sep=0.75pt]  [font=\footnotesize] [align=left] {$c_{2,0}$};
 	\draw (619,265) node [anchor=north west][inner sep=0.75pt]  [font=\small] [align=left] {$\gamma$};
 	\draw (488,75) node [anchor=north west][inner sep=0.75pt]   [align=left] {$p_{1,1}$};
 	\draw (486,391) node [anchor=north west][inner sep=0.75pt]   [align=left] {$p_{1,0}$};
 	\draw (399,313) node [anchor=north west][inner sep=0.75pt]   [align=left] {$p_{2,0}$};
 	\draw (391,142) node [anchor=north west][inner sep=0.75pt]   [align=left] {$p_{2,1}$};

 \end{tikzpicture}
    \end{center}
\caption{The annuli $A_{1,0}$ and $A_{2,0}$ in the definition of $\rho$. }
\label{sh}
    \end{figure}
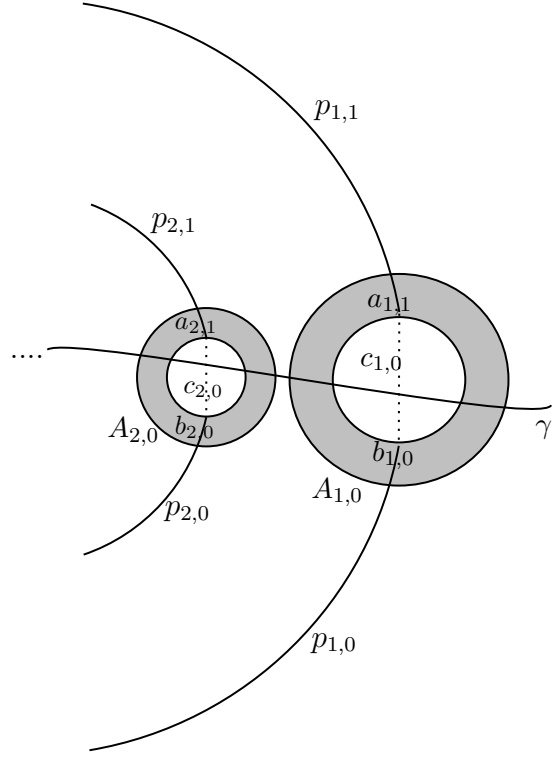

    \begin{lemma}\label{adm}
    	$\rho$ is admissible for $\Gamma_\alpha$ and 
    	\begin{eqnarray*}
    		\int_{\gamma} \rho \, ds +\sum_{p \in \mathcal{C}(U_\alpha) \cap |\gamma|} \rho(p)=\infty  \quad \text{for all } \gamma \in \Gamma_\alpha.  
    	\end{eqnarray*}
    \end{lemma}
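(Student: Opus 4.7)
The plan is to prove the stronger assertion that $\int_\gamma \rho\, ds + \sum_{p \in \mathcal{C}(U_\alpha) \cap |\gamma|} \rho(p) = \infty$ for every $\gamma \in \Gamma_\alpha$; admissibility of $\rho$ then follows a fortiori. The core idea is a level-by-level accounting. For each $k \geq 1$, the circle $S(0, r_k)$ separates $\overline{\mathbb{D}}$ from $J$ in $\mathbb{S}^2$, and the only complementary components of $U_\alpha$ meeting it are the arcs $p_{k,0}, \ldots, p_{k,k}$; the remaining points of this circle, the ``gap arcs'', lie in $U_\alpha$. A standard topological argument then shows that for each $k \geq 1$, at least one of the following occurs: (a) some $p_{k,j}$ belongs to $|\gamma|$, or (b) one of the disjoint subcurves $\gamma_\ell$ of the restriction $\gamma|_{U_\alpha}$ crosses $S(0, r_k)$ at a point of a gap arc.

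First, I would verify the geometric fact that every gap arc at level $k$ lies inside the closed disk $\overline{B}(c_{k,j}, g_k)$: since $g_k = r_k \sin\theta_k$ and $|c_{k,j}| = r_k \cos\theta_k$, a direct planar computation shows that the distance from $c_{k,j}$ to a point $r_k e^{i\theta}$ on the gap arc is maximized at the endpoints $b_{k,j}, a_{k,j+1}$, where it equals exactly $g_k$. Consequently, whenever case (b) occurs, the subcurve $\gamma_\ell$ passes through $\overline{B}(c_{k,j}, g_k)$ for some $j$. Next, I would argue that, modulo case (a), this subcurve actually exits $B(c_{k,j}, R_k)$. The crucial quantitative input is that $R_k < \frac{1}{100 \cdot 2^{k-1}}$, which is much smaller than the spacing $r_{k-1} - r_k = 2^{-(k-1)}$ between consecutive level circles. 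Hence $B(c_{k,j}, R_k)$ meets $S(0, r_{k'})$ only for $k' = k$, and the only components of $\mathcal{C}(U_\alpha)$ it intersects are $p_{k,j}$ and $p_{k,j+1}$. If neither is in $|\gamma|$, the endpoints of $\gamma_\ell$ must lie on $\partial \overline{\mathbb{D}}$, on $J$, or on components at other levels, all outside $B(c_{k,j}, R_k)$; if one of them is in $|\gamma|$, case (a) is triggered anyway.

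In the former sub-case, $\gamma_\ell$ contains an arc that traverses $A_{k,j}$ from $\partial B(c_{k,j}, R_k)$ to $\partial B(c_{k,j}, g_k)$. Using that $x \mapsto |x - c_{k,j}|$ is $1$-Lipschitz yields the standard annular modulus estimate
\[
\int \rho\, ds \;\geq\; \int_{g_k}^{R_k} \frac{dr}{r \log(R_k/g_k)} \;=\; 1
\]
on this piece. Letting $K_a$ be the set of levels $k \geq 1$ for which case (a) holds and $K_b := \mathbb{N}_+ \setminus K_a$, the point-mass sum contributes at least $\frac{1}{(k+1)\log(k+1)}$ for every $k \in K_a$, while the curve integral receives at least $1$ for every $k \in K_b$; the pairwise disjointness of the annuli $A_{k,j}$ already established in the excerpt makes these contributions additive. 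If $K_b$ is infinite, $\int_\gamma \rho\, ds$ already diverges. If $K_b$ is finite, then $K_a$ is cofinite, and $\sum_{k \in K_a} \frac{1}{(k+1)\log(k+1)} = \infty$ by the integral test applied to $1/(x \log x)$. The conclusion of the lemma follows in either case.

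The main obstacle will be executing the case (b) analysis carefully in the quotient space $\hat{U}_\alpha$, where $\gamma$ may repeatedly jump through collapsed components at many different levels. The formulation above, which isolates each gap crossing inside a single component subcurve $\gamma_\ell$ of $\gamma|_{U_\alpha}$ and then exploits the smallness of $R_k$ to confine $\gamma_\ell$'s endpoints outside $B(c_{k,j}, R_k)$ (modulo visits to the two adjacent arcs, which themselves put us into case (a)), is designed precisely to bypass this bookkeeping.
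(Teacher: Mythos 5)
Your proposal is correct and follows essentially the same route as the paper: the same dichotomy at each level between hitting an arc $p_{k,j}$ (point mass $\frac{1}{(k+1)\log(k+1)}$, divergent over a cofinite set) and crossing a gap, with the standard annular estimate $\int_{g_k}^{R_k}\frac{dr}{r\log(R_k/g_k)}=1$ in the disjoint annuli $A_{k,j}$. In fact your bookkeeping is slightly more careful than the paper's Case 2, since you explicitly rule out the scenario where $\gamma$ enters $\overline{B}(c_{k,j},g_k)$ but escapes by jumping through the adjacent arcs $p_{k,j},p_{k,j+1}$ (which the paper absorbs only implicitly), using that $B(c_{k,j},R_k)$ meets no other complementary components.
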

 \begin{proof}
    Fix a curve $\gamma \in \Gamma_\alpha$. We consider cases based on the number of times $\gamma$ passes through complementary components or "gaps" between them; see Figure \ref{ua} 
    for illustration. 
    
    \textbf{Case 1:}
    Suppose that $|\gamma| \cap \overline{B}(c_{k, j},g_k) \neq \emptyset$ only for finitely many $k=1,2,\ldots$. Then there exists a $K_1>0$ such that for every $k\geq K_1$, we have $|\gamma|\cap p_{k,j} \neq \emptyset$ for some $j(k) \in \{0, \ldots, k\}$. This implies that 
    \[
    \int_{\gamma} \rho \, ds +\sum_{p \in \mathcal{C}(U_\alpha) \cap |\gamma|} \rho(p)\geq \sum_{k=K_1}^{\infty} \frac{1}{(k+1) \log (k+1)}=\infty. 
    \]
    
     \textbf{Case 2:} Suppose that there is an infinite set $K \subset \{1,2,\ldots\}$ such that for every $k \in K$ there is an index $j=j(k)$ for which $|\gamma| \cap \overline{B}(c_{k, j},g_k) \neq \emptyset$. Then 
          \[
    \int_{\gamma} \rho \, ds +\sum_{p \in \mathcal{C}(U_\alpha) \cap |\gamma|} \rho(p)\geq \sum_{k\in K} \int_{|\gamma| \cap A_{k, j}} \rho \, ds \geq \sum_{k\in K} \int_{g_{k}}^{R_{k}} \frac{dr}{r \log {\frac{R_{k}}{g_{k}}}}=\infty. 
    \]
\end{proof}
 It follows from Lemma \ref{adm} that for every $\epsilon>0$ the function $\widetilde{\rho}:= \epsilon\rho$ is also admissible for $\Gamma_\alpha$. Therefore, 
        \[
    	\begin{aligned} \Mod_\mathrm{T} (\Gamma_\alpha) & \leq \int_{U_\alpha} \epsilon^2 \rho^2 \, d\mathcal{H}^2+\sum_{p \in \mathcal{C}(U_\alpha)} \epsilon^2\rho(p)^2 \  \\ & =\sum_{k=1}^{\infty} \int_{A_{k, j}} \frac{\epsilon^2 \, d\mathcal{H}^2}{|x-c_{k,j}|^2 \log^2 \frac{R_k}{g_k}}+\sum_{k=1}^{\infty} \frac{\epsilon^2(1+k)}{(1+k)^2 \log ^2(1+k)}\\ &  \leq \epsilon^2\left(\sum_{k=1}^{\infty} \int_{g_k}^{R_k} \frac{2 \pi}{r \log ^2 \frac{R_k}{g_k}} d r+\sum_{k=1}^{\infty} \frac{1}{(1+k) \log ^2(1+k)}\right) \\ & \leq \epsilon^2\left(2 \pi \sum_{k=1}^{\infty} \frac{1}{2^k}+\sum_{k=1}^{\infty} \frac{1}{(1+k) \log ^2(1+k)} \right)\\& \leq \epsilon^2 (4\pi+M).\end{aligned}
   \]
   Here $M<\infty$ denotes the sum of the second series, and the third inequality follows from \eqref{gk}. Since the above estimate holds for all $\epsilon>0$, by letting $\epsilon\to 0$ we deduce that 
    \begin{equation}\label{gammmaa}
    	\Mod_\mathrm{T} (\Gamma_\alpha)=0.
    \end{equation}
    
    On the other hand, if $\hat{f}(\overline{\mathbb{D}}) \in \mathcal{C}_N(D)$, then using the same argument as in Lemma \ref{pm} and Remark \ref{diam2yx}, we have
    \begin{eqnarray*}
    	\Mod_\mathrm{T} (\hat{f}(\Gamma_\alpha))>0.
    \end{eqnarray*}
    This contradicts \eqref{gammmaa} and the conformal invariance of the transboundary modulus (Theorem \ref{qcinvariance}), so we have $\hat{f}(\overline{\mathbb{D}}) \in \mathcal{C}_P(D)$ for every conformal homeomorphism $f: U_\alpha \to D$ onto a circle domain $D$. 
\end{proof}


\section{Proof of Theorem \ref{ptonon2}}
In this section we show that no $\ell^\alpha$-assumption on the complementary components is sufficient to guarantee the last conclusion of Theorem \ref{main}, i.e., that point-components are mapped to point-components. Recall that $\mathcal{F}_\alpha(X)$ is the collection of countably connected subdomains $\Omega_\alpha$ of $X$ satisfying  
	$\sum_{p \in \mathcal{C}(\Omega_\alpha)} \operatorname{diam}(p)^\alpha<\infty$.

	\begin{theoremx}[Theorem \ref{ptonon2}]
	For every $\alpha>0$ there is a domain $\Omega_\alpha \in \mathcal{F}_\alpha(\mathbb{S}^2)$ such that $\{0\} \in \mathcal{C}_P(\Omega_\alpha)$ but $\hat{f}(\{0\}) \in \mathcal{C}_N(D)$ for every conformal homeomorphism $f: \Omega_\alpha \to D$ onto a circle domain $D \subset \mathbb{S}^2$.
   \end{theoremx}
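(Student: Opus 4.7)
The plan is to mirror the proof of Theorem \ref{nontop2}: construct $\Omega_\alpha$ explicitly, establish a strict positive lower bound on the transboundary modulus of a suitable curve family in $\hat{\Omega}_\alpha$, and contradict the assumption $\hat{f}(\{0\}) \in \mathcal{C}_P(D)$ via the conformal invariance of transboundary modulus (Theorem \ref{qcinvariance}). The domain $\Omega_\alpha$ will be assembled from concentric arcs accumulating at the origin, with the point $\{0\}$ occurring as an isolated component.

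For the construction I would place closed circular arcs $p_{k,j}$ on circles $|z|=r_k$ with $r_k\to 0$ (for instance $r_k=2^{-k}$), using the angular labeling of Theorem \ref{nontop2}: on the $k$-th circle, $k+1$ equal arcs separated by narrow staggered gaps of angular size $2\theta_k$. Set
$$\Omega_\alpha:=\mathbb{S}^2\setminus\bigl(\{0\}\cup\bigcup_{k,j} p_{k,j}\bigr).$$
A computation analogous to the one in the proof of Theorem \ref{nontop2} gives
$$\sum_{k,j}\diam(p_{k,j})^\beta\lesssim \sum_k 2^{-k\beta}/(k+1)^{\beta-1}<\infty\quad\text{for every }\beta>0,$$
so $\Omega_\alpha\in\mathcal{F}_\alpha(\mathbb{S}^2)$ for \emph{every} $\alpha>0$, and moreover $\sum_{p\in\mathcal{C}(\Omega_\alpha)}\diam(p)^2<\infty$.

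Fix a Jordan curve $J\subset\Omega_\alpha$ separating $\{0\}\cup\bigcup_{k,j} p_{k,j}$ from $\infty$, and let $\Gamma$ be the family of curves in $\hat{\Omega}_\alpha$ joining $\{0\}$ and $\pi_{\Omega_\alpha}(J)$. The core claim is $\Mod_\mathrm{T}(\Gamma)\geqslant c>0$. I would establish this by adapting the Coarea argument of Lemma \ref{pm} and Remark \ref{diam2yx}: choose a simple curve $\gamma_0\subset\mathbb{S}^2$ joining $J$ and $0$, set $m(x):=\dist(x,|\gamma_0|)$, and for $t\in(0,\dist(0,J))$ extract injective $1$-Lipschitz continua $\eta_t\subset m^{-1}(t)$ separating $J$ from $0$ in $\mathbb{S}^2$. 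After a short extension into $B(0,t)$ (to actually reach the point $0$) the projection of $\eta_t$ to $\hat{\Omega}_\alpha$ is a curve in $\Gamma$. Integrating the admissibility inequality for $\rho\in Adm_\mathrm{T}(\Gamma)$ over $t$, and applying the Coarea and Hölder inequalities together with $\mathcal{H}^2(\Omega_\alpha)+\sum_p\diam(p)^2<\infty$, yields the desired uniform lower bound. For the contradiction, suppose $\hat{f}(\{0\})\in\mathcal{C}_P(D)$ for some conformal $f\colon\Omega_\alpha\to D$ onto a circle domain $D$. The circle domain $D$ is $\pi$-cofat and upper $2$-regular, and the proofs of Propositions \ref{unibdd} and \ref{asyb} use only these features (not finite connectedness of the ambient domain); applying Proposition \ref{asyb} to $D$ with nested annuli around the point $\hat{f}(\{0\})$ yields $\Mod_\mathrm{T}(\hat{f}(\Gamma))=0$, contradicting Theorem \ref{qcinvariance}.

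The main obstacle is the adaptation of Lemma \ref{pm} to a starting component consisting of a single point: the level sets $\eta_t$ do not touch $\{0\}$ exactly for $t>0$, so a careful limiting argument (or controlled short extension) is required to produce genuine curves in $\Gamma$ while preserving the admissibility inequality up to a small error as $t$ varies. Unlike Theorem \ref{nontop2}, where the task is to exhibit \emph{one} efficient admissible function achieving $\Mod_\mathrm{T}=0$, here the energy of \emph{every} admissible function must be bounded below, which makes the lower bound considerably more delicate and crucially depends on the staggered angular arrangement of the arcs together with $\sum_p\diam(p)^2<\infty$.
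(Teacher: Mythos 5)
There is a genuine gap, and it is fatal to the construction itself. Your domain built from concentric arcs with super-narrow staggered gaps shrinking to the origin is exactly the type of configuration for which the transboundary modulus of curves reaching the limit object is \emph{zero}, not positive: the admissible function from the proof of Theorem \ref{nontop2} transplants verbatim (value $\frac{\epsilon}{(k+1)\log(k+1)}$ on each arc $p_{k,j}$, and the logarithmic annular weight $\frac{\epsilon}{|x-c_{k,j}|\log(R_k/g_k)}$ with $R_k/g_k\geqslant e^{2^k}$ on disjoint annuli around the gaps). Any curve ending at $\{0\}$ must cross the circles $|z|=r_k$ for all large $k$, either through an arc (Case 1, divergent sum) or through a gap-ball (Case 2, contribution $\geqslant\epsilon$ per crossing, infinitely often), so $\epsilon\rho$ is admissible for every $\epsilon>0$ and $\Mod_\mathrm{T}(\Gamma)=0$ for your $\Gamma$. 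Hence your ``core claim'' $\Mod_\mathrm{T}(\Gamma)\geqslant c>0$ is false for this construction, and no contradiction with conformal invariance can be extracted. Relatedly, the proposed repair of Lemma \ref{pm} cannot work in the generality you invoke: that coarea argument uses $\diam(p_\ell)>0$ precisely to guarantee that the level continua $\eta_t$ join $\partial p_\ell$ to $J$ for a whole interval of $t$; for a point component the needed ``short extension into $B(0,t)$'' is exactly where all the cost sits, and if the argument survived it would show that \emph{every} point component of \emph{every} $\ell^2$-domain maps to a non-trivial component — contradicted already by a circle domain with an isolated point component and the identity map.

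The paper's construction is of the opposite character and this is the missing idea. It takes $\mathcal{C}_N(\Omega_\alpha)$ to be a binary tree of radial segments $p_w=e^{i\theta_w}[\epsilon_j,\epsilon_j+\ell_j]$, $w\in W_j$, accumulating only at $0$, with $\ell_j,\epsilon_j$ and the tiny angular offsets $\alpha_j$ tuned so that (a) $\sum_w \diam(p_w)^\alpha=\sum_j 2^j\ell_j^\alpha<\infty$, and (b) the family $\Gamma_w$ of circular subarcs joining a child $p_w$ to its parent $p_{\bar w}$ has \emph{conformal} modulus $>4^j$ — i.e.\ consecutive generations are extremely cheap to connect. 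Positivity of $\Mod_\mathrm{T}(\Gamma)$ (curves joining $p_\emptyset$ to $\{0\}$) is then proved not by coarea but by a selection argument: normalizing an admissible $\rho$ to total energy $1$, averaging over branches with the natural measure on $\{0,1\}^{\mathbb{N}}$ and Cauchy--Schwarz produces a branch $u_\infty$ with $\sum_j\rho(p_{\widetilde u_j})\leqslant 1$, and the bound $\Mod_\mathrm{C}(\Gamma_{\widetilde u_j})>4^j$ yields curves $\gamma_j$ with $\int_{\gamma_j}\rho\,ds\leqslant 2^{-j}$; concatenating the $\gamma_j$ gives a single curve in $\Gamma$ along which the total admissibility sum is $\leqslant 2$, forcing $\Mod_\mathrm{T}(\Gamma)>0$. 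The contradiction step you outline (adapting Proposition \ref{asyb} to the cofat, countably connected circle domain $D$ to get $\Mod_\mathrm{T}(\hat f(\Gamma))=0$, then invoking Theorem \ref{qcinvariance}) matches the paper; but without a construction engineered to make curves to $\{0\}$ expensive for every admissible function, that step has nothing to contradict.
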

   Recall that such a conformal homeomorphism $f$ always exists by the He-Schramm theorem. 

\begin{proof}[Proof of Theorem \ref{ptonon2}]
	Fix $\alpha>0$. We construct a domain $\Omega_\alpha$ by describing each element of $\mathcal{C}(\Omega_\alpha)$. The origin $\{0\}$ is the only element of $\mathcal{C}_P(\Omega_\alpha)$. We parametrize the non-trivial complementary components as follows: Given $j=1,2,\ldots$, we denote by $W_j$ the collection of finite words $w=w_1\cdots w_j$, where $w_k \in \{0,1\}$ for every $1 \leq k \leq j$. 
	Moreover, denote $W_0:=\{\emptyset\}$ and $W:=\bigcup_{j=0}^\infty W_j$. Then
	$$
	\mathcal{C}_N(\Omega_\alpha)=\{p_w: \, w \in W\}.   
	$$
			
     \begin{figure}
     \tikzset{every picture/.style={line width=0.75pt}} 
	\begin{center}  
     \begin{tikzpicture}[x=0.75pt,y=0.75pt,yscale=-0.85,xscale=0.85]
	
	\draw  [line width=3] [line join = round][line cap = round] (329.5,341) .. controls (329.5,341) and (329.5,341) .. (329.5,341) ;
	\draw    (376,340) -- (597,338) ;
	\draw    (492,201) -- (360,312) ;
	\draw    (356,370) -- (480,488) ;
	\draw    (419,197) -- (340,320) ;
	\draw    (481,268) -- (351,329) ;
	\draw    (482,409) -- (351,350) ;
	\draw  [dash pattern={on 0.84pt off 2.51pt}] (161.19,339) .. controls (161.19,246.32) and (236.32,171.19) .. (329,171.19) .. controls (421.68,171.19) and (496.81,246.32) .. (496.81,339) .. controls (496.81,431.68) and (421.68,506.81) .. (329,506.81) .. controls (236.32,506.81) and (161.19,431.68) .. (161.19,339) -- cycle ;
	\draw    (407,488.25) -- (333,355.25) ;
	\draw    (364,249) -- (334,324) ;
	\draw    (391,265) -- (340,327) ;
	\draw    (410,287) -- (342,331) ;
	\draw    (422,314) -- (345.25,337) ;
	\draw    (423,361) -- (345,343) ;
	\draw    (408,394) -- (341,351) ;
	\draw    (385,417) -- (337,354) ;
	\draw    (352,433) -- (331,359.25) ;
	\draw    (341,274) -- (330,332) ;
	\draw    (328,355.25) -- (335,405) ;
	\draw  [dash pattern={on 0.84pt off 2.51pt}] (232.94,339) .. controls (232.94,285.95) and (275.95,242.94) .. (329,242.94) .. controls (382.05,242.94) and (425.06,285.95) .. (425.06,339) .. controls (425.06,392.05) and (382.05,435.06) .. (329,435.06) .. controls (275.95,435.06) and (232.94,392.05) .. (232.94,339) -- cycle ;
	\draw  [dash pattern={on 0.84pt off 2.51pt}] (262.75,339) .. controls (262.75,302.41) and (292.41,272.75) .. (329,272.75) .. controls (365.59,272.75) and (395.25,302.41) .. (395.25,339) .. controls (395.25,375.59) and (365.59,405.25) .. (329,405.25) .. controls (292.41,405.25) and (262.75,375.59) .. (262.75,339) -- cycle ;
	\draw  [dash pattern={on 0.84pt off 2.51pt}] (116.28,339) .. controls (116.28,221.52) and (211.52,126.28) .. (329,126.28) .. controls (446.48,126.28) and (541.72,221.52) .. (541.72,339) .. controls (541.72,456.48) and (446.48,551.72) .. (329,551.72) .. controls (211.52,551.72) and (116.28,456.48) .. (116.28,339) -- cycle ;
	
	\draw (600,327) node [anchor=north west][inner sep=0.75pt]   [align=left] {$p_{\emptyset}$};
	\draw (495,186) node [anchor=north west][inner sep=0.75pt]   [align=left] {$p_{0}$};
	\draw (478,486) node [anchor=north west][inner sep=0.75pt]   [align=left] {$p_{1}$};
	\draw (402,172) node [anchor=north west][inner sep=0.75pt]   [align=left] {$p_{00}$};
	\draw (482,255) node [anchor=north west][inner sep=0.75pt]   [align=left] {$p_{01}$};
	\draw (342,230) node [anchor=north west][inner sep=0.75pt]   [align=left] {$p_{000}$};
	\draw (384,246) node [anchor=north west][inner sep=0.75pt]   [align=left] {$p_{001}$};
	\draw (410,269) node [anchor=north west][inner sep=0.75pt]   [align=left] {$p_{010}$};
	\draw (424,305) node [anchor=north west][inner sep=0.75pt]   [align=left] {$p_{011}$};
	\draw (485,403) node [anchor=north west][inner sep=0.75pt]   [align=left] {$p_{10}$};
	\draw (399,490) node [anchor=north west][inner sep=0.75pt]   [align=left] {$p_{11}$};
	\draw (427,351) node [anchor=north west][inner sep=0.75pt]   [align=left] {$p_{100}$};
	\draw (408,392) node [anchor=north west][inner sep=0.75pt]   [align=left] {$p_{101}$};
	\draw (376,420) node [anchor=north west][inner sep=0.75pt]   [align=left] {$p_{110}$};
	\draw (338,435) node [anchor=north west][inner sep=0.75pt]   [align=left] {$p_{111}$};
	\draw (318,260) node [anchor=north west][inner sep=0.75pt]   [align=left] {$p_{0\cdots0}$};
	\draw (307,404) node [anchor=north west][inner sep=0.75pt]   [align=left] {$p_{1\cdots1}$};
	\draw (315,334) node [anchor=north west][inner sep=0.75pt]   [align=left] {0};
	\draw (336,291) node [anchor=north west][inner sep=0.75pt]   [align=left] {.};
	\draw (340,292) node [anchor=north west][inner sep=0.75pt]   [align=left] {.};
	\draw (332,393) node [anchor=north west][inner sep=0.75pt]   [align=left] {.};
	\draw (335,392) node [anchor=north west][inner sep=0.75pt]   [align=left] {.};
	\draw (409,341) node [anchor=north west][inner sep=0.75pt]   [align=left] {.};
	\draw (408,345.5) node [anchor=north west][inner sep=0.75pt]   [align=left] {.};
	\draw (407,350) node [anchor=north west][inner sep=0.75pt]   [align=left] {.};
	\draw (409,333) node [anchor=north west][inner sep=0.75pt]   [align=left] {.};
	\draw (408,328.5) node [anchor=north west][inner sep=0.75pt]   [align=left] {.};
	\draw (407,324) node [anchor=north west][inner sep=0.75pt]   [align=left] {.};

    \end{tikzpicture}
	\end{center}
	\caption{Some complementary components of $\Omega_\alpha$.}
	\label{oa}
    \end{figure}
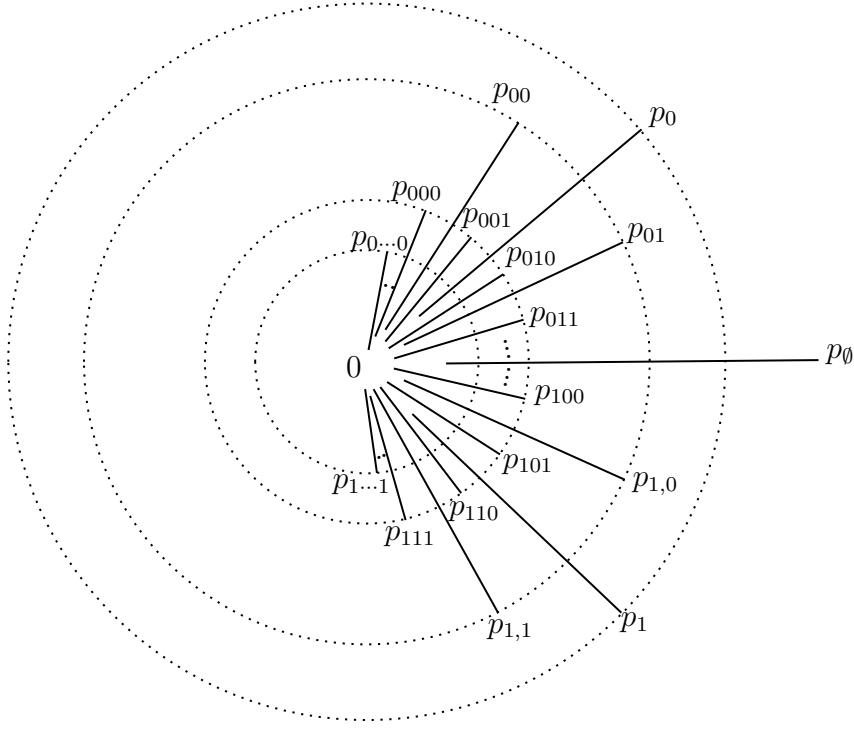
	
	We now define the elements $p_w$. For each $n=0,1,2,\ldots$, consider the sequences  
	\begin{equation}\label{ew2}
		\ell_{n}:=\Big(\frac{1}{2^{n+n_0}\cdot (n+n_0)^2}\Big)^\frac{1}{\alpha}>0 \quad \text{and} \quad \epsilon_{n}:=\frac{\ell_{n+1}}{2^{n+n_0+1}}>0, 
	\end{equation} 
	where $n_0 = n_0(\alpha) \geq 1$ is a large enough integer such that 
	\begin{equation}\label{n0}
		\epsilon_{n-1} \leq \epsilon_{n+1}+\ell_{n+1} \quad \text{for every } n =1,2,\ldots ; 
	\end{equation} 
	an elementary computation shows that such an $n_0$ exists. 
	We apply an inductive definition. First, let 
	$$
	p_\emptyset=[\epsilon_0,\epsilon_0+\ell_0] 
	$$
	be a segment on the positive $x$-axis. Next, fix $j=1,2,\ldots$ and suppose that $w=\bar w w_j$, where $\bar w \in W_{j-1}$ and $w_j \in \{0,1\}$. Moreover, assume that 
	\begin{equation} \label{porkki}
	p_{\bar w}:=e^{i\theta_{\bar w}}\cdot \left[\epsilon_{j-1}, \epsilon_{j-1}+\ell_{j-1}\right]
	\end{equation}   
	is defined in polar coordinates. We set  
	\begin{equation} \label{norkki}
	p_w:=e^{i \theta_w}\cdot \left[\epsilon_{j}, \epsilon_{j}+\ell_{j}\right], 
	\end{equation} 
	where the angle $\theta_w$ is given by
	\begin{equation}\label{theta}
		\theta_w:= \begin{cases}\theta_{\bar{w}}+\alpha_{j}, & \text { if } w_j=0, \\ \theta_{\bar{w}}-\alpha_{j}, & \text { if } w_j=1, \end{cases} \quad \quad \alpha_j:=4^{-10j} \log \Big(1+\frac{1}{2^{j+n_0+3}}\Big).
	\end{equation} 
	Notice that the sequence $(\alpha_j)_j$ decreases rapidly towards $0$ as $j \to \infty$. Figure \ref{oa} depicts the first few generations of the complementary components $p_w$. We will apply the following properties. 
	
	\begin{lemma} \label{terveystalo}
	Let $w=\bar w w_j$ be as above. The following properties hold: 
	\begin{enumerate} 
	\item \label{one} The elements $p_w$ are pairwise disjoint subsets of the right half-plane
	\item \label{two} For every $\epsilon>0$ there is a $j_\epsilon \geq 1$ such that if $j \geq j_\epsilon$, then $p_w \subset B(0,\epsilon)$. 
	\item \label{three} There is a family $\Gamma_w$ of curves joining $p_w$ and $p_{\bar{w}}$ in $\Omega_\alpha$, 
	such that $\Mod_\mathrm{C}(\Gamma_w) > 4^{j}$. 
	\item \label{four} We have $\sum_{p \in \mathcal{C}(\Omega_\alpha)} \operatorname{diam}(p)^{\alpha}<\infty$; thus $\Omega_\alpha \in \mathcal{F}_\alpha(\mathbb{S}^2)$. 
	\end{enumerate} 
	\end{lemma}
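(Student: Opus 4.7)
My plan is to verify parts (1), (2), and (4) as quick consequences of the construction, and to concentrate on (3), which is the main technical step.

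For (1), every angle has the form $\theta_w=\sum_{k=1}^{|w|}\varepsilon_k\alpha_k$ with signs $\varepsilon_k\in\{\pm 1\}$ determined by $w$. The super-geometric decay $\alpha_{k+1}\leqslant 4^{-10}\alpha_k$ built into \eqref{theta} gives $\sum_{k>k_0}\alpha_k\leqslant\alpha_{k_0}/4^{9}$, so at the first index where two distinct words disagree the dominating $\pm 2\alpha_{k_0}$ term cannot be cancelled by the tail. Hence the angles $\theta_w$ are pairwise distinct, the segments $p_w$ lie on distinct open rays through the origin, and they are pairwise disjoint. Since $\sum_k\alpha_k$ is much smaller than $\pi/2$, every $p_w$ lies in $\{\re z>0\}$. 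Part (2) is immediate from \eqref{norkki}: $p_w\subset\overline{B}(0,\epsilon_j+\ell_j)$ for $|w|=j$, and $\epsilon_j+\ell_j\to 0$ by \eqref{ew2}. For (4), since $\diam(p_w)=\ell_j$ whenever $|w|=j$ and $\#W_j=2^j$, \eqref{ew2} gives
\[
\sum_{w\in W}\diam(p_w)^\alpha=\sum_{j=0}^\infty 2^j\ell_j^\alpha=\sum_{j=0}^\infty\frac{1}{2^{n_0}(j+n_0)^2}<\infty.
\]

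For (3), I propose taking $\Gamma_w$ to be the family of circular arcs centered at the origin with radii $r\in(\epsilon_{j+1}+\ell_{j+1},\,\epsilon_j+\ell_j]$, each sweeping angularly through the wedge bounded by the rays $\{\arg z=\theta_{\bar w}\}$ and $\{\arg z=\theta_w\}$. Every such arc terminates on $p_{\bar w}$ (because $r>\epsilon_{j+1}+\ell_{j+1}\geqslant\epsilon_{j-1}$ by \eqref{n0} and $r\leqslant\epsilon_j+\ell_j<\epsilon_{j-1}+\ell_{j-1}$) and on $p_w$ (since $r\in[\epsilon_j,\epsilon_j+\ell_j]$). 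The essential step is verifying that the open arcs avoid every other $p_{w'}$. Repeating the angular analysis from (1) with $k_0=|\bar w|$ shows that every $p_{w'}$ other than proper descendants of $p_w$ has $|\theta_{w'}-\theta_{\bar w}|>\alpha_j$, hence lies outside the wedge; and each proper descendant at level $j'\geqslant j+1$ has radial extent contained in $[0,\epsilon_{j+1}+\ell_{j+1}]$ (since $j'\mapsto\epsilon_{j'}+\ell_{j'}$ is decreasing). Because our arcs are constrained to radii strictly greater than $\epsilon_{j+1}+\ell_{j+1}$, they miss every such descendant, so $\Gamma_w$ lies in $\Omega_\alpha$.

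With $\Gamma_w$ thus constructed, $z\mapsto\log z$ sends the wedge conformally onto the Euclidean rectangle of width $\log\bigl((\epsilon_j+\ell_j)/(\epsilon_{j+1}+\ell_{j+1})\bigr)$ and height $\alpha_j$, and the classical modulus of a rectangle gives
\[
\Mod_\mathrm{C}(\Gamma_w)\geqslant\frac{1}{\alpha_j}\log\frac{\epsilon_j+\ell_j}{\epsilon_{j+1}+\ell_{j+1}}.
\]
From \eqref{ew2} the logarithm is bounded below by a positive constant depending only on $\alpha$ and $n_0$ (of size roughly $(\log 2)/\alpha$), while \eqref{theta} yields $\alpha_j\leqslant 4^{-10j}\cdot 2^{-j-n_0-3}$. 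Combining these bounds gives $\Mod_\mathrm{C}(\Gamma_w)\gtrsim 4^{10j}\cdot 2^{j+n_0+3}$, which is comfortably larger than $4^j$ for every $j\geqslant 1$ provided $n_0$ was chosen large enough initially. The principal obstacle is exactly the geometric verification that the wedge is disjoint from every competing $p_{w'}$; this is what motivates the very rapid decay $\alpha_j\sim 4^{-10j}$ in \eqref{theta} and the overlap condition \eqref{n0}. Everything else is essentially a rectangle-modulus computation.
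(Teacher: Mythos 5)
Your proof is correct and follows essentially the same route as the paper: the same family of circular arcs at radii between $\epsilon_{j+1}+\ell_{j+1}$ and $\epsilon_j+\ell_j$, the same use of \eqref{n0} and of the rapid decay of $(\alpha_j)_j$, and the same lower bound $\frac{1}{\alpha_j}\log\frac{\epsilon_j+\ell_j}{\epsilon_{j+1}+\ell_{j+1}}$, which the paper derives by H\"older's inequality in polar coordinates instead of your log-map/rectangle computation. Two cosmetic points: components in the sibling subtree of $w$ can have $|\theta_{w'}-\theta_{\bar w}|\leqslant\alpha_j$, but they lie on the opposite side of the ray through $p_{\bar w}$ and hence still miss the wedge; and your constant lower bound of order $(\log 2)/\alpha$ for the logarithm requires $n_0$ to be large compared with $\log_2\alpha$ (which you hedge by enlarging $n_0$), whereas the paper's weaker estimate \eqref{kikka} already suffices because $\alpha_j$ carries the matching small factor $\log\bigl(1+2^{-(j+n_0+3)}\bigr)$.
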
 
	
	\begin{proof}
	Since $p_\emptyset$ lies on the positive real axis, Condition \eqref{one} follows from the construction \eqref{theta} of the angles $\theta_w$ and the choice of the sequence $(\alpha_j)_j$.   
	To prove Condition \eqref{two}, we notice that by \eqref{ew2}, both $\ell_j$ and $\epsilon_j$ 
	converge to $0$ as $j \to \infty$. Consequently, we can choose a $j_\epsilon \geq 1$ such that $\ell_j+\epsilon_j<\epsilon$ for every $j \geq j_\epsilon$, which implies 
	that $p_w \subset B(0,\epsilon)$ for all $w \in W_j$, $j \geq j_\epsilon$.   
	
	We now prove Condition \eqref{three}. We claim that for every 
	\begin{equation} 
	\label{vimppa} \epsilon_{j+1}+\ell_{j+1} < t < \epsilon_j+\ell_j 
	\end{equation} 
	there is a subarc $J_t$ of $S(0,t)$ so that 
	\begin{itemize} 
	\item[(i)] the length of $J_t$ is $t \alpha_j$,   
	\item[(ii)] one endpoint of $J_t$ lies in $p_{\bar w}$ and the other endpoint lies in $p_w$,  
	\item[(iii)] all other points of $J_t$ lie in $\Omega_\alpha$. 
	\end{itemize} 
	Indeed, if $\epsilon_{j-1} < t < \epsilon_j+\ell_j$, then $S(0,t)$ intersects both $p_{\bar w}$ and $p_w$ by \eqref{porkki} and \eqref{norkki}. We let $J_t$ be the shorter of the two 
	subarcs of $S(0,t)$ 		joining $p_{\bar w}$ and $p_w$. Then $J_t$ satisfies Properties (i) and (ii). By \eqref{n0}, such properties remain true for all $t$ that satisfy \eqref{vimppa}. 
	
	To prove Property (iii), we fix a $w' \in W_{j'}$ other than $\bar w$ or $w$. The choice of the sequence $(\alpha_j)_j$ in \eqref{theta} then guarantees that if $t$ satisfies 
	\eqref{vimppa} then $p_{w'} \cap J_t = \emptyset$ 
	if $j' \leq j$. Finally, the lower bound in \eqref{vimppa} guarantees that $p_{w'} \cap J_t = \emptyset$ also when $j' > j$. We conclude that Property (iii) holds.    
	
	We set $\Gamma_w=\{J_t: \, \epsilon_{j+1}+\ell_{j+1}  < t < \epsilon_j+\ell_j \}$, and recall from the definitions of $\ell_j$ and $\epsilon_j$ in \eqref{ew2} that 
	\begin{equation} 
	\label{kikka}
	\frac{\epsilon_j+\ell_j}{\epsilon_{j+1}+\ell_{j+1}} \geq \frac{\epsilon_j+\ell_{j+1}}{(1+\frac{1}{2^{j+n_0+2}})\ell_{j+1}} = 1+\frac{1}{2^{j+n_0+2}+1}. 
	\end{equation} 
	
	Let $\rho$ be admissible for $\Gamma_w$. Then $1 \leq \int_{J_t} \rho \, ds$ for every $J_t \in \Gamma_w$. We divide both sides by $t$ and integrate over $t$ to get 
	\begin{eqnarray} \label{nakki1}
	\log \Big(\frac{\epsilon_j + \ell_j}{\epsilon_{j+1}+\ell_{j+1}} \Big) \leq \int_{\epsilon_{j+1}+\ell_{j+1}}^{\epsilon_j + \ell_j} \frac{dt}{t}  \leq \int_{A_w} \frac{\rho(x)}{|x|} \, d\mathcal{H}^2(x)=:I, 
	\end{eqnarray} 
	where $A_w=\{x \in \Omega_\alpha: \, x \in J_t \text{ for some }J_t \in \Gamma_w\}$. We apply H\"older's inequality and polar coordinates to see that 
	\begin{eqnarray} \nonumber 
	I^2  &\leq&   \Big( \int_{A_w} |x|^{-2} \, d\mathcal{H}^2(x) \Big) \Big(\int_{\Omega_\alpha} \rho(x)^2 \, d\mathcal{H}^2(x)  \Big) \\ \label{nakki2} &=&  
	\alpha_j  \log \Big(\frac{\epsilon_j + \ell_j}{\epsilon_{j+1}+\ell_{j+1}} \Big) \Big(\int_{\Omega_\alpha} \rho(x)^2 \, d\mathcal{H}^2(x)  \Big). 
	\end{eqnarray}
	Combining \eqref{kikka}, \eqref{nakki1}, and \eqref{nakki2} with \eqref{theta}, and minimizing over all admissible functions $\rho$, then yields the desired bound $\Mod_C \Gamma_w >4^{j}$. 
	
	Finally, since the set $W_j$ has $2^j$ elements for every $j=0,1,2,\ldots$,  
	$$ 
	\sum_{p \in \mathcal{C}(\Omega_\alpha)} \operatorname{diam}(p)^{\alpha}=\sum_{j=0}^{\infty} 2^j \ell_j^\alpha \leq \sum_{j=0}^\infty (j+n_0)^{-2} < \infty 
	$$ 
	by the choice \eqref{ew2} of the sequence $(\ell_j)_j$. Condition \eqref{four} follows. 
	\end{proof}

	Since $\Omega_\alpha$ is a countably connected domain, by the He-Schramm theorem there exists a conformal homeomorphism $f: \Omega_\alpha \to D$ 
	onto a circle domain $D$. Furthermore, $f$ is unique up to postcomposition by a M\"obius transformation. To show that $\hat{f}(\{0\}) \in \mathcal{C}_N(D)$, 
	we assume towards a contradiction that $\hat{f}(\{0\})$ is a point-component. 
	
	We denote by $\Gamma$ the family of curves in $\hat{\Omega}_\alpha$ joining $p_\emptyset$ and $\{0\}$. As in Remark \ref{diam2yx}, we can adapt the proof of Proposition 
	\ref{asyb} to show that $\Mod_\mathrm{T}(\hat{f}(\Gamma)) =0$.
	By the conformal invariance of the transboundary modulus (Theorem \ref{qcinvariance}), we have $\Mod_\mathrm{T}(\Gamma)=0$. The desired contradiction follows if we can prove that 
	\begin{equation}
		\label{zmo}
		\Mod_\mathrm{T}(\Gamma)>0. 
	\end{equation} 
	
	We denote by $W_\infty$ the collection of all the infinite words $w_1w_2\cdots$, where $w_k \in \{0,1\}$. We equip the space $W_\infty$ with the unique probability measure $\nu$ 
	satisfying $\nu(U_{w})=2^{-j}$ for all $j \geq 1$ and $w \in W_j$, where 
	$$
	U_w:=\{w_\infty \in W_\infty: \, w_\infty=ww_{j+1}w_{j+2}\cdots \}. 
	$$
	
	Let $\rho:\hat{\Omega}_\alpha \to [0,\infty]$ be an admissible function for $\Gamma$ that satisfies  
	\begin{equation} 
		\label{rohma}
		\int_{\Omega_\alpha} \rho^2\, d\mathcal{H}^2 + \sum_{w \in W} \rho(w)^2=1. 
	\end{equation} 
	We claim that there exists a $u_\infty=u_1u_2\cdots \in W_\infty$ so that 
	\begin{equation} \label{xy1} 
		\sum_{j=1}^\infty \rho(p_{\widetilde{u}_j}) \leq 1.  
	\end{equation} 
	Here $\widetilde{u}_j:=u_{1}u_2\cdots u_j$. Indeed, by Fubini's theorem we have 
	\begin{eqnarray*} 
		\int_{W_\infty} \sum_{j=1}^\infty \rho(p_{\widetilde{u}_j}) \, d\nu(u_\infty) 
		= \sum_{j=1}^\infty \sum_{w \in W_j} \nu(U_w)\rho(p_w)
		= \sum_{j=1}^\infty 2^{-j} \sum_{w \in W_j} \rho(p_w)=:I_\infty. 
	\end{eqnarray*} 
	By the Cauchy-Schwarz inequality and since $\#W_j=2^{j}$,
	\begin{eqnarray*}
		I_\infty \leq \sum_{j=1}^\infty 2^{-j/2} \Big(\sum_{w \in W_j} \rho(p_w)^2 \Big)^{1/2} 
		\leq \Big( \sum_{j=1}^\infty 2^{-j} \Big)^{1/2} \Big( \sum_{w \in W} \rho(p_w)^2 \Big)^{1/2} \leq1, 
	\end{eqnarray*} 
	where the last inequality follows from \eqref{rohma}. Combining the estimates shows that there exists a $u_\infty=u_1u_2\cdots \in W_\infty$ satisfying \eqref{xy1}. 
	
	By Lemma \ref{terveystalo}, for every $\widetilde{u}_j=u_{1}u_2\cdots u_j$, $j=1,2,\ldots$, there is a curve family $\Gamma_{\widetilde{u}_j}$ joining $p_{\widetilde{u}_{j-1}}$ and $p_{\widetilde{u}_j}$ in $\Omega_\alpha$ such that 
	$\Mod_\mathrm{C}(\Gamma_{\widetilde{u}_j}) > 4^{j}$. We next claim that for every such $j$ there is a $\gamma_j \in \Gamma_{\widetilde{u}_j}$ such that 
	\begin{equation} \label{ubles}
		\int_{\gamma_j} \rho \, ds \leq 2^{-j}. 
	\end{equation}
	Assume towards a contradiction that there is a $j_0\geq 1$ such that $\int_{\gamma_{j_0}} \rho \, ds > 2^{-j_0}$ for every $\gamma_{j_0} \in \Gamma_{\widetilde{u}_{j_0}} $. Then $2^{j_0}\rho$ is an admissible function for $\Gamma_{\widetilde{u}_{j_0}}$, and so 
	$$\int_{\Omega_\alpha} 4^{j_0} \rho^2\, d\mathcal{H}^2 \geq \Mod_\mathrm{C}(\Gamma_{\widetilde{u}_{j_0}})> 4^{j_0}.$$
	Thus $\int_{\Omega_\alpha} \rho^2\, d\mathcal{H}^2 > 1$, which contradicts \eqref{rohma}. We have proved \eqref{ubles}. 

	We sequentially concatenate the curves $\pi_{\Omega_\alpha} \circ \gamma_j \in \hat{\Omega}_\alpha$, $j=1,2,\ldots$, to obtain a curve $\gamma \in \Gamma$ 
	such that the elements of $|\gamma| \cap \mathcal{C}(\Omega_\alpha)$ are $p_\emptyset$, $p_{\widetilde{u}_j}$($j=1,2,\ldots$) and $\{0\}$. Combining \eqref{xy1} and \eqref{ubles}, we arrive at
	\begin{equation} \label{anko}
		\int_{\gamma} \rho \, ds + \sum_{j=1}^\infty \rho(p_{\widetilde{u}_j}) \leq 2. 
	\end{equation} 
	Since $\rho$ is any admissible function for $\Gamma$ that satisfies \eqref{rohma}, the definition of the transboundary modulus and \eqref{anko} show that \eqref{zmo} holds. This gives a contradiction, 
	so we have $\hat{f}(\{0\}) \in \mathcal{C}_N(D)$ for every conformal homeomorphism $f: \Omega_\alpha \to D$ onto a circle domain $D \subset \mathbb{S}^2$. 
\end{proof}


\section{Non-preservation of cofatness under quasiconformal maps } \label{sec:cofatness-not-qc-invariant}

We construct an Ahlfors $2$-regular metric space $X$ which is homeomorphic to $\mathbb{R}^2$, a quasiconformal homeomorphism $F: X\to \mathbb{R}^2$, and a cofat domain $\Omega\subset X$, such that $F(\Omega)\subset \mathbb{R}^2$ is not cofat. The construction can be modified to obtain a metric two-sphere $X$, $\Omega \subset X$, and $F:X \to \mathbb{S}^2$ with the same properties; we omit the details.

\subsection{Metric plane} Let
$$X_{+}:=\bigl\{(u,u^{2},v):u\ge 0,\ v\in\mathbb{R}\bigr\}, \quad 
X_{-}:=\bigl\{(u,0,v):u\ge 0,\ v\in\mathbb{R}\bigr\}, $$
and $X:=X_{+}\cup X_{-}$. We equip $X$ with the restriction of the Euclidean metric of $\mathbb R^{3}$: 
$$d_X(p,q):=|p-q|_{\mathbb R^{3}}.$$
Then $(X,d_X)$ is a metric space and homeomorphic to $\mathbb{R}^2$.

Next, we define $F:X\to \mathbb{R}^2$ by
$$F(u,u^{2},v):=(s(u),v),
\qquad \text{for}\,\,\,(u,u^{2},v)\in X_{+},$$
and 
$$F(u,0,v):=(-u,v),
\qquad  \text{for}\,\,\,(u,0,v)\in X_{-},$$
where
$$
    s(u)
:=
\int_{0}^{u}\sqrt{1+4t^{2}}\,dt \quad \text{for}\quad u\ge0.
$$
Then $F$ is a homeomorphism. Elementary computations show that 
\begin{eqnarray}\label{preservelength}
    \ell_X(\gamma) &=&
\ell_{\mathbb R^{2}}(F\circ\gamma)\quad \text{for every rectifiable curve} \,\, \gamma\,\,\text{in}\,\, X, \\
\label{preservearea}
    \mathcal{H}^{2}_X(E) &=& \mathcal L^{2}(F(E))\quad \text{for every Borel set}\,\, E\subset X.
\end{eqnarray}
In particular, $F$ preserves the conformal modulus of any curve family in $X$, i.e., is $1$-quasiconformal. It is not difficult to see that both $X_+$ and $X_-$ are Ahlfors $2$-regular, hence so is $X$.

\subsection{Cofat domain }
We define
\begin{eqnarray*}
A
&:=&
\bigl\{
(u,u^{2},v)\in X_{+}:u^{2}+v^{2}\leq1
\bigr\}\subset X_+, \\ 
B
&:=&
\bigl\{
(u,0,v)\in X_-:
0\leq u\leq 1/2,\ |v|\leq u^{2}
\bigr\}\subset X_-,  
\end{eqnarray*} 
and 
\begin{equation}\label{defE}
    E:=A\cup B\subset X.
\end{equation}
Note that $A$ and $B$ are closed and connected, and $A\cap B=\{(0,0,0)\}$, so that $E$ is a continuum.

Next, we prove that $E\subset X$ is fat.
Let $$D_{+}
:=
\bigl\{
(u,v)\in\mathbb R^{2}:
u\geq0,\ u^{2}+v^{2}\leq1
\bigr\}.$$
The bi-Lipschitz property of the homeomorphism 
$$\Phi:D_{+}\to A,
\qquad
\Phi(u,v):=(u,u^{2},v) $$ 
guarantees the existence of $C_A>0$ such that
\begin{equation}\label{fatonA}
    \mathcal H^{2}_{X}
\bigl(A\cap B_X(a,r)\bigr)
\geq C_A r^{2}
\end{equation}
for every $a\in A$, $r>0$ for which
$B_X(a,r)$ does not contain $E$. 

It remains to consider points of $B$. We fix $b:=(u_b,0,v_b)\in B$.
If $u_b=0$, then $b=(0,0,0)\in A$, and \eqref{fatonA} applies. We assume that $u_b>0$, and denote $$\widetilde{b}:=(u_b,u_b^{2},v_b).$$
Since $u_b\leq 1/2$ and $|v_b|\leq u_b^{2}$, we have
$$u_b^2+v_b^2\le u_b^2+u_b^4 \le 1, $$
and hence $\widetilde{b}\in A$. We consider three cases separately.  

Suppose first that $r\geq 2u_b^2$. Then $d_X(b,\widetilde{b})=u_b^2\le r^2/2$, and 
$$B_X(\widetilde{b},r/2)\subset B_X(b,r).$$
Therefore, by \eqref{fatonA}, we have
\begin{equation}\label{fatE1}
    \mathcal H^{2}_{X}
\bigl(E\cap B_X(b,r)\bigr)
\geq
\mathcal H^{2}_{X}
\left(
A\cap B_X(\widetilde{b},r/2)\right)\ge C_Ar^2/4.
\end{equation}

Next, we suppose that $0<r\leq u_b^2/8$. 
Note that the two boundary arcs of the outward cusp 
$$\bigl\{
(x,y):
0\leq x\leq 1/2,\ |y|\leq x^{2}
\bigr\}$$ have slopes bounded by
$$\left|\frac{d}{du_b}(\pm u_b^2)\right|
=
2u_b
\leq 1. $$
It follows that there exists $C_B>0$ such
that
\begin{equation}\label{fatB}
    \mathcal H^{2}_{X}
\bigl(E\cap B_X(b,r)\bigr)
\geq \mathcal H^{2}_{X}
\bigl(B\cap B_X(b,r)\bigr)
\geq C_Br^{2}.
\end{equation}

Finally, suppose that ${u_b^{2}}/8<r<2u_b^{2}$.
By \eqref{fatB}, we have
$$\mathcal{H}^{2}_{X}
\bigl(E\cap B_X(b,r)\bigr)
\geq
\mathcal H^{2}_{X}
\left(
B\cap B_X\left(b,{u_b^{2}}/8\right)
\right)
\geq
\frac{C_B}{64}u_b^{4}.$$
Combining with $r<2u_b^2$, we conclude that
\begin{equation}\label{fatE}
    \mathcal H^{2}_{X}
\bigl(E\cap B_X(b,r)\bigr)
\geq
\frac{C_B}{256}r^{2}.
\end{equation}
Combining \eqref{fatonA}-\eqref{fatE} proves that $E$ is $\tau$-fat. 

Let $\Omega:=X\setminus E$. Then $\Omega$ is a domain which is cofat since $E$ is fat. 

\subsection{The image domain is not cofat.}
Finally, we show that $F(E)\subset \mathbb{R}^2$ is not fat. Given $0<t<1/4$, let $b_t:=(-t,0)$ and $r_t:=t/2$.
Since $$F(A)\subset\{(x,y)\in\mathbb R^{2}:x\geq0\},$$
we have $B(b_t,r_t)\cap F(A)=\emptyset.$
This implies that $$F(E)\cap B(b_t,r_t)=F(B)\cap B(b_t,r_t).$$

For every $(-u,v)\in F(B)\cap B(b_t,r_t),$ we have $|u-t|<t/2$, which implies that 
$$t/2<u<3t/2 \quad \text{and}\quad |v|\leq u^{2}\leq 9t^2/4.$$
Therefore, we have 
$$\mathcal{L}^2\bigl(F(E)\cap B(b_t,r_t)\bigr)\le \frac{9}{2}t^3.$$

We conclude that $$\frac{\mathcal{L}^2\bigl(F(E)\cap B(b_t,r_t)\bigr)}{r_t^2}\le 18t\to 0\quad \text{as} \,\, t\to 0.$$
Hence $F(E)\subset \mathbb{R}^2$ is not fat, i.e., $F(\Omega)$ is not cofat.

	\bibliographystyle{plain}
	\bibliography{1234.bib}

\end{document}